\documentclass[reqno, 12pt]{amsart}
\pdfoutput=1
\makeatletter
\let\origsection=\section \def\section{\@ifstar{\origsection*}{\mysection}} 
\def\mysection{\@startsection{section}{1}\z@{.7\linespacing\@plus\linespacing}{.5\linespacing}{\normalfont\scshape\centering\S}}
\makeatother        

\usepackage{amsmath,amssymb,amsthm}
\usepackage{mathrsfs}
\usepackage{mathabx}\changenotsign
\usepackage{wasysym}
\usepackage{dsfont}
 
\usepackage{xcolor}
\usepackage[backref]{hyperref}
\hypersetup{
    colorlinks,
    linkcolor={red!60!black},
    citecolor={green!60!black},
    urlcolor={blue!60!black}
}

\usepackage[open,openlevel=2,atend]{bookmark}

\usepackage[abbrev,msc-links,backrefs]{amsrefs} 
\usepackage{doi}

\renewcommand{\PrintDOI}[1]{\doi{#1}}

\usepackage[T1]{fontenc}
\usepackage{lmodern}
\usepackage[babel]{microtype}
\usepackage[english]{babel}

\linespread{1.24}
\usepackage{geometry}
\geometry{left=27.5mm,right=27.5mm, top=25mm, bottom=25mm}
\numberwithin{equation}{section}
\numberwithin{figure}{section}

\usepackage{scalerel}

\makeatletter
\newcommand{\overrighharpoonup}[1]{\ThisStyle{ \vbox {\m@th\ialign{##\crcr
 \rightharpoonupfill \crcr
 \noalign{\kern-\p@\nointerlineskip}
 $\hfil\SavedStyle#1\hfil$\crcr}}}}

\def\rightharpoonupfill{$\SavedStyle\m@th\mkern+0.8mu\cleaders\hbox{$\shortbar\mkern-4mu$}\hfill\rightharpoonuptip\mkern+0.8mu$}

\def\rightharpoonuptip{ \raisebox{\z@}[2pt][1pt]{\scalebox{0.55}{$\SavedStyle\rightharpoonup$}}}

\def\shortbar{ \smash{\scalebox{0.55}{$\SavedStyle\relbar$}}}
\makeatother

\usepackage{enumitem}
\def\rmlabel{\upshape({\itshape \roman*\,})}

\def\alabel{\upshape({\itshape \alph*\,})}

\def\nlabel{\upshape({\itshape \arabic*\,})}

\makeatletter
\def\greek#1{\expandafter\@greek\csname c@#1\endcsname}
\def\Greek#1{\expandafter\@Greek\csname c@#1\endcsname}
\def\@greek#1{\ifcase#1
	\or $\alpha$\or $\beta$\or $\gamma$\or $\delta$\or $\epsilon$\or $\zeta$\or $\eta$\or 
	$\theta$\or $\iota$\or $\kappa$\or $\lambda$	\or $\mu$\or $\nu$\or $\xi$\or $o$\or 
	$\pi$\or $\rho$\or $\sigma$\or $\tau$\or $\upsilon$\or $\phi$\or $\chi$\or $\psi$\or 
	$\omega$\fi}
\def\@Greek#1{\ifcase#1
	\or $\mathrm{A}$\or $\mathrm{B}$\or $\Gamma$\or $\Delta$\or $\mathrm{E}$\or $\mathrm{Z}$\or 
	$\mathrm{H}$\or $\Theta$\or $\mathrm{I}$\or $\mathrm{K}$\or $\Lambda$\or $\mathrm{M}$\or 
	$\mathrm{N}$\or $\Xi$\or $\mathrm{O}$\or $\Pi$\or $\mathrm{P}$\or $\Sigma$\or 
	$\mathrm{T}$\or $\mathrm{Y}$\or $\Phi$\or $\mathrm{X}$\or $\Psi$\or $\Omega$\fi}

\AddEnumerateCounter{\greek}{\@greek}{24}
\AddEnumerateCounter{\Greek}{\@Greek}{12}

\makeatother

\let\polishlcross=\l
\def\l{\ifmmode\ell\else\polishlcross\fi}

\let\sm=\setminus

\makeatletter
\def\moverlay{\mathpalette\mov@rlay}
\def\mov@rlay#1#2{\leavevmode\vtop{   \baselineskip\z@skip \lineskiplimit-\maxdimen
   \ialign{\hfil$\m@th#1##$\hfil\cr#2\crcr}}}
\newcommand{\charfusion}[3][\mathord]{
    #1{\ifx#1\mathop\vphantom{#2}\fi
        \mathpalette\mov@rlay{#2\cr#3}
      }
    \ifx#1\mathop\expandafter\displaylimits\fi}
\makeatother

\newcommand{\dcup}{\charfusion[\mathbin]{\cup}{\cdot}}
\newcommand{\bigdcup}{\charfusion[\mathop]{\bigcup}{\cdot}}

\DeclareFontFamily{U}  {MnSymbolC}{}
\DeclareSymbolFont{MnSyC}         {U}  {MnSymbolC}{m}{n}
\DeclareFontShape{U}{MnSymbolC}{m}{n}{
    <-6>  MnSymbolC5
   <6-7>  MnSymbolC6
   <7-8>  MnSymbolC7
   <8-9>  MnSymbolC8
   <9-10> MnSymbolC9
  <10-12> MnSymbolC10
  <12->   MnSymbolC12}{}
\DeclareMathSymbol{\powerset}{\mathord}{MnSyC}{180}
\DeclareMathSymbol{\leftY}{\mathord}{MnSyC}{42}

\DeclareSymbolFont{symbolsC}{U}{txsyc}{m}{n}
\SetSymbolFont{symbolsC}{bold}{U}{txsyc}{bx}{n}
\DeclareFontSubstitution{U}{txsyc}{m}{n}
\DeclareMathSymbol{\strictif}{\mathrel}{symbolsC}{74}

\let\epsilon=\varepsilon
\let\eps=\epsilon
\let\rho=\varrho

\def\NN{{\mathds N}}

\def\ZZ{{\mathds Z}}

\def\RR{{\mathds R}}

\newcommand{\cX}{\mathcal{X}}
\newcommand{\cY}{\mathcal{Y}}
\newcommand{\cZ}{\mathcal{Z}}

\newcommand{\ccL}{\mathscr{L}}

\def\NN{\mathds{N}}

\def\RR{\mathds{R}}

\def\ZZ{\mathds{Z}}

\def\bx{\mathbf{x}}

\def\olg{\overline{\gamma}}
\def\oleta{\overline{\eta}}

\def\1{\mathbbm{1}}
\def\<{\langle}
\def\>{\rangle}

\theoremstyle{plain}
\newtheorem{thm}{Theorem}[section]
\newtheorem{theorem}[thm]{Theorem}
\newtheorem{question}[thm]{Question}\newtheorem{lemma}[thm]{Lemma}
\newtheorem{proposition}[thm]{Proposition}

\newtheorem{fact}[thm]{Fact}
\newtheorem{claim}[thm]{Claim}

\theoremstyle{definition}
\newtheorem{remark}[thm]{Remark}
\newtheorem{dfn}[thm]{Definition}

\let\phi=\varphi
\let\theta=\vartheta
\let\lra=\longrightarrow
\let\vn=\varnothing
\usepackage{yfonts}

\DeclareMathOperator{\AP}{AP}

\DeclareMathOperator{\vdW}{vdW}
\DeclareMathOperator{\Sz}{Sz}

\newcommand{\conc}{\charfusion[\mathbin]{+}{\times}}

\usepackage{tikz}
\usetikzlibrary{shapes.misc,calc,intersections,patterns,decorations.pathreplacing, calligraphy}
\usetikzlibrary{decorations.markings}
\usetikzlibrary{calc,positioning,decorations.pathmorphing,decorations.pathreplacing}
\usetikzlibrary{hobby,patterns}

\usepackage{multicol}
\usepackage{subcaption}
\captionsetup[subfigure]{labelfont=rm}

\begin{document}

\title{Colouring versus density in integers and Hales-Jewett cubes}

\author{Christian Reiher}
\address{Fachbereich Mathematik, Universit\"{a}t Hamburg, Hamburg, Germany}
\email{christian.reiher@uni-hamburg.de}
\thanks{We acknowledge financial support from the Open Access Publication Fund of Universit\"at Hamburg.}

\author{Vojt\v{e}ch R\"{o}dl}
\address{Department of Mathematics, Emory University, 
    Atlanta, GA, USA}
\email{vrodl@emory.edu}

\author{Marcelo Sales}
\address{Department of Mathematics, University of California, Irvine, CA, USA}
\email{mtsales@uci.edu}
\thanks{The second and third author were supported by NSF grant DMS 1764385,
and the second author was also supported by NSF grant DMS 2300347.}
\subjclass[2010]{05D10, 11B25}
\keywords{Hales-Jewett theorem, partite construction, Pisier problem}

\begin{abstract}
We construct for every integer $k\geq 3$ and every real $\mu\in(0, \frac{k-1}{k})$ 
a set of integers $X=X(k, \mu)$ which, when coloured with finitely many colours, 
contains a monochromatic $k$-term arithmetic progression, whilst every 
finite $Y\subseteq X$ has a subset $Z\subseteq Y$ of size $|Z|\geq \mu |Y|$ that 
is free of arithmetic progressions of length $k$. This answers a question of 
Erd\H{o}s, Ne\v{s}et\v{r}il, and the second author. 
Moreover, we obtain an analogous multidimensional statement and a 
Hales-Jewett version of this result.  
\end{abstract}

\maketitle

\section{Introduction}\label{sec:intro}

\subsection{Arithmetic progressions}
For every natural number $n$ we set $[n]=\{1,\dots, n\}$. Given a set $X$ and a 
nonnegative integer $k$, we write $X^{(k)}=\{e\subseteq X\colon |e|=k\}$ 
for the collection of all $k$-subsets of $X$. The theorem of van der Waerden 
is one of the earliest results in Ramsey theory. It asserts that every finite 
colouring of the integers yields monochromatic arithmetic progressions of arbitrary
length. More precisely, for positive integers $k$ and $r$ we say that a set of 
integers~$X\subseteq \NN$ has the \textit{van der Waerden property $\vdW(k,r)$} 
if every $r$-colouring of $X$ contains a monochromatic $\AP_k$, i.e., an arithmetic 
progression of length~$k$. Now van der Waerden's theorem~\cite{vdW27} can be 
briefly stated as follows.

\begin{theorem}[van der Waerden]\label{thm:vanderwaerden}
	For all integers $k\geq 3$ and $r\geq 2$ there exists an integer $w=w(k,r)$ 
	such that for every $n\geq w$ the set $[n]$ has the property $\vdW(k,r)$. \qed
\end{theorem}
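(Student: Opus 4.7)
The plan is to prove this classical result by a double induction, on the length $k$ in the outer loop and on a parameter $s$ measuring the number of ``colour-focused'' progressions in the inner loop. The base case $k=2$ is trivial: a colouring of any $r+1$ integers yields a monochromatic pair by pigeonhole, hence $w(2,r)=r+1$. So assume $k\geq 3$ and that $w(k-1,r)$ exists for every $r\geq 2$.

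To carry out the inductive step I would introduce the following terminology. A family of arithmetic progressions
\[
P_i=\{a_i,\,a_i+d_i,\,\dots,\,a_i+(k-2)d_i\},\qquad i=1,\dots,s,
\]
is \emph{$s$-colour-focused at $f$} if each $P_i$ is monochromatic, the $s$ progressions use $s$ \emph{pairwise distinct} colours, and $f=a_i+(k-1)d_i$ for every $i$. The auxiliary statement I want to prove is: for every $1\leq s\leq r$ there exists $N(k,r,s)$ such that any $r$-colouring of $[N(k,r,s)]$ contains $s$ colour-focused monochromatic $\AP_{k-1}$'s together with their common focus $f\in[N(k,r,s)]$. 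Once this is established for $s=r$, the focus~$f$ itself must repeat one of the $r$ colours already present, thereby extending some $P_i$ to a monochromatic $\AP_k$; so $w(k,r)\leq N(k,r,r)$.

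For the inner induction on~$s$, the case $s=1$ is simply $\vdW(k-1,r)$ applied in a sufficiently long initial segment, after arranging by a translation that the focus still lies inside the range. For the step from~$s$ to~$s+1$, set $M=N(k,r,s)$ and partition a long interval $[N]$ into consecutive blocks $B_1,\dots,B_t$ of length $2M$ each. Inside the first half of every block the induction hypothesis supplies an $s$-colour-focused family whose focus also lies in the block. Since the number of $r$-colourings of a block is at most $r^{2M}$, choosing $t>r^{2M}$ forces two blocks $B_j$ and $B_{j'}$ to receive identical colour patterns. The translation from $B_j$ to $B_{j'}$ then sends the focused family in $B_j$ to a congruent focused family in $B_{j'}$, and stepping once more from the focus in $B_j$ by the same translation produces an $(s+1)$-st arithmetic progression of length $k-1$ that shares the new focus; either its colour is new (yielding the desired $(s+1)$-colour-focused configuration) or it duplicates one of the existing colours, in which case we have already produced a monochromatic $\AP_k$ and are done.

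The main obstacle is purely bookkeeping: ensuring at each stage that the focus, and not merely the progressions, stays inside the working interval, and that the extended progressions in the inductive step use step sizes compatible with the original ones. Both issues are handled by choosing the surrounding interval large enough (a constant multiple of $N$ in terms of $k$ suffices) before invoking pigeonhole. The resulting bound $w(k,r)$ grows at Ackermannian speed, which is a natural artefact of the nested inductions and is not improved by this argument.
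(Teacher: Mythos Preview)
The paper does not prove this theorem; it is quoted as a classical result with a terminal \qed and a reference to van der Waerden's 1927 paper, so there is no proof in the paper to compare against.

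Your outline is the standard colour-focusing double induction and its architecture is right, but the inner step $s\to s+1$ has a real gap. Pigeonhole (``choosing $t>r^{2M}$'') yields only \emph{two} identically coloured blocks $B_j$, $B_{j'}$, and two blocks are enough only when $k=3$. The extended $i$-th progression you want is $\{a_i+m(d_i+D):0\le m\le k-2\}$, where $D$ is the block shift; its $m$-th term equals the $m$-th term of $P_i$ translated by $mD$, hence lives in the block $m$ steps beyond $B_j$. For $k\ge 4$ you therefore need $k-1$ identically coloured blocks in arithmetic progression, not two. The same objection hits your ``$(s+1)$-st'' progression built from the focus: stepping from $f_j$ by $D$ gives a monochromatic pair, which is an $\AP_{k-1}$ only for $k=3$; and in any case the old progressions have focus $f_j$, not the ``new focus'' $f_j+(k-1)D$, unless they are extended as above. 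The repair is to replace pigeonhole by the outer induction hypothesis: apply $\vdW\bigl(k-1,\,r^{2M}\bigr)$ to the sequence of blocks, treating each of the $r^{2M}$ possible block-colourings as a single colour, to obtain $k-1$ blocks in arithmetic progression carrying the same pattern. With that correction your sketch becomes the classical proof (and the nested use of $\vdW(k-1,\cdot)$ with a tower-sized number of colours is exactly what produces the Ackermannian bound you mention).
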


Solving a long standing conjecture of Erd\H{o}s and Tur\'{a}n~\cite{ET36}, 
Szemer\'{e}di proved the following celebrated result in~\cite{Sz75}.

\begin{theorem}[Szemer\'{e}di]\label{thm:szemeredi}
    For every integer $k\geq 3$ and every real $\delta\in (0,1]$ there exists 
    an integer $n_0=n_0(k,\delta)$ such that for every $n\geq n_0$ the following 
    holds: Every subset $A\subseteq [n]$ of size $|A|\geq \delta n$ contains 
    an $\AP_k$. \qed
\end{theorem}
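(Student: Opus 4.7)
I would pursue the density-increment strategy, which for $k=3$ is due to Roth and for general $k$ was brought to fruition by Gowers. The idea is to reduce the theorem to the following dichotomy: if $A\subseteq [n]$ has density $\delta$ and contains no $\AP_k$, then there is an arithmetic progression $P\subseteq[n]$ of length at least $n^{c(k,\delta)}$ on which $A$ has density at least $\delta+c'(k,\delta)$. Iterating this dichotomy on the restriction of $A$ to $P$, viewed as a subset of $[|P|]$ after rescaling, the density must eventually exceed $1$. Since this is absurd, the initial $n$ must be bounded in terms of $k$ and $\delta$, furnishing the required threshold $n_0$.

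To prove the dichotomy I would introduce the Gowers uniformity norms $\|\cdot\|_{U^{s}}$ with $s=k-1$, together with the \emph{generalised von Neumann inequality}: the normalised count of $\AP_k$ in $A$ differs from the random prediction $\delta^k$ by at most a constant multiple of $\|1_A-\delta\cdot 1_{[n]}\|_{U^{k-1}}$. An $\AP_k$-free set of density $\delta$ must therefore satisfy $\|1_A-\delta\|_{U^{k-1}}\geq \eta(k,\delta)>0$. The next ingredient is an \emph{inverse theorem}: any bounded function with large $U^{k-1}$-norm correlates in absolute value with a polynomial phase $e(p(x))$ of degree $k-2$ (in Gowers' original formulation; a nilsequence in the Green--Tao--Ziegler sharpening). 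Partitioning $[n]$ via Weyl's equidistribution theorem into arithmetic subprogressions of length roughly $n^{c}$ on which $p$ is nearly constant then converts this correlation bound into a density increment of $A$ on at least one such subprogression, closing the iteration.

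The principal obstacle is unquestionably the inverse theorem. For $k=3$ it is a classical Fourier argument, since $\|\cdot\|_{U^{2}}$ is controlled by the largest non-trivial Fourier coefficient and the desired subprogression can be located by a pigeonhole inside a Bohr set. For $k\geq 4$ the $U^{k-1}$-norm is genuinely nonlinear and forces one to extract higher-order structure; this is the technical heart of Gowers' argument, involving a delicate combination of the Balog--Szemer\'edi--Gowers theorem with Freiman's theorem on sets of small doubling in order to upgrade mere $U^{k-1}$-largeness into correlation with an honest polynomial phase. Everything else---the generalised von Neumann inequality, the passage from correlation to density increment, and the outer iteration controlling $n_0$---is comparatively routine once this inverse input is in hand.
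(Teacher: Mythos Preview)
The paper does not prove this theorem at all: the trailing \verb|\qed| signals that it is quoted as a known result, with references to Szemer\'edi's original argument and to later proofs via ergodic theory, higher-order Fourier analysis, and hypergraph regularity. So there is no ``paper's own proof'' to compare against; Szemer\'edi's theorem serves purely as background motivation in the introduction.

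Your sketch is a faithful high-level summary of Gowers' density-increment approach, which is indeed one of the proofs the paper cites. The outline---generalised von Neumann to force large $U^{k-1}$-norm, an inverse theorem to extract correlation with a polynomial (or nil-) phase, Weyl-type equidistribution to convert correlation into a density increment on a long subprogression, and iteration---is correct as a plan, and you have identified the right hard step (the inverse theorem for $k\ge 4$). Nothing here is wrong, but since the paper does not supply a proof there is nothing further to compare.
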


In other words, Szemer\'{e}di's theorem states that every subset of $\NN$ 
with positive upper density contains arbitrarily long arithmetic progressions. 
This result stimulated a lot of research and today there are 
many proofs using tools from a diverse spectrum of mathematical areas,
including ergodic theory and higher order Fourier analysis 
(see, e.g.,~\cites{F77, FK78, G01, G07, NRS06, RS04, T06}).

Similar to the van der Waerden property $\vdW(k,r)$ one can define a property 
related to Theorem \ref{thm:szemeredi}. For an integer $k\geq 3$ and a 
real~$\delta>0$ we say that a finite set of integers $X\subseteq \NN$ has the 
\textit{Szemer\'{e}di property $\Sz(k,\delta)$} if every subset $Y\subseteq X$
of size $|Y|\geq \delta |X|$ contains an $\AP_k$. So Szemer\'edi's theorem states 
that $[n]$ has the property $\Sz(k, \delta)$ whenever $n\geq n_0(k, \delta)$.

By looking at the densest colour class one sees that for $\delta\le 1/r$ 
the property~$\Sz(k,\delta)$ yields~$\vdW(k,r)$. In this sense, Szemer\'{e}di's 
theorem implies van der Waerden's theorem. One could argue that Szemer\'edi's 
original proof shows that, conversely, van der Waerden's theorem implies his theorem,
but it is safe to say that this direction is a lot deeper. 
Motivated by a famous problem of Pisier~\cite{P83} the following question was considered in~\cites{ENR90, BRSSTW06}.

\begin{question}\label{q:AP_k}
	Is it true that for every $k\geq 3$ there are $\delta>0$ and a set of integers $X$ 	
	such that
	\begin{enumerate}[label=\rmlabel]
   	   \item $X$ has the property $\vdW(k,r)$ for every $r\geq 1$;
    	\item every finite $Y\subseteq X$ fails to have property $\Sz(k,\delta)$?
	\end{enumerate}
\end{question}

The question did also appear in a list of open problems in additive combinatorics
compiled by Croot and Lev (see~\cite{CL07}*{Problem~3.10}).
Notice that a negative answer would show that the properties $\vdW(k,r)$ 
and $\Sz(k,\delta)$ are equivalent. This would, in particular, provide a surprising 
new way of deducing Szemer\'{e}di's theorem from van der Waerden's theorem. 
For this reason, the authors of~\cite{ENR90} conjectured that Question~\ref{q:AP_k} 
has a positive answer. The present work confirms this.

\begin{theorem}\label{thm:main1}
    For every integer $k\geq 3$ and real $\mu\in (0,\frac{k-1}{k})$ there is a 
    set of natural numbers $X=X(k,\mu)\subseteq \NN$ such that
    \begin{enumerate}[label=\rmlabel]
        \item\label{it:14i} for every $r\geq 1$ every $r$-colouring of $X$ contains 
        	a monochromatic $\AP_k$,
        \item\label{it:14ii} but every finite subset $Y\subseteq X$ contains a 
        		subset $Z\subseteq Y$ of size $|Z|\geq \mu|Y|$ without an~$\AP_k$.
    \end{enumerate}
\end{theorem}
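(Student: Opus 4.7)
The plan is to reduce Theorem~\ref{thm:main1} to a finite statement and then patch together the finite pieces into an infinite set. For each $r \geq 1$ I aim to produce a finite set $X_r \subseteq \NN$ enjoying property $\vdW(k, r)$ such that every $Y \subseteq X_r$ admits an $\AP_k$-free $Z \subseteq Y$ with $|Z| \geq \mu|Y|$. I would then define $X = \bigcup_{r \geq 1} (a_r + M_r X_r)$ with affine parameters $a_r, M_r$ chosen so that the translated and dilated blocks lie in widely separated intervals (for instance with $M_r$ and the gap before the $r$th block enormous compared to $\max X_{r-1}$). This separation ensures that every $k$-term arithmetic progression contained in $X$ lies entirely inside a single block, so both conclusions of the theorem lift from the blocks to $X$; for the density part, one simply unites the guaranteed $\AP_k$-free subsets from each block that $Y$ meets.

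For the finite problem, the strategy is first to establish a Hales--Jewett analog and then to transfer to $\NN$. Specifically, for any $k, r$ and $\mu < (k-1)/k$, I would construct a set $V \subseteq [k]^n$ with $n$ large such that every $r$-colouring of $V$ yields a monochromatic combinatorial line, while every $Y \subseteq V$ contains a combinatorial-line-free subset of relative size at least $\mu$. To pass to integers, I would embed $[k]^n$ into $\NN$ via $(x_1, \dots, x_n) \mapsto \sum_{i=1}^{n} x_i C^{i-1}$ for a sufficiently large base $C$ (say $C = k^{n+1}$). Combinatorial lines in $[k]^n$ become $\AP_k$'s in the image, and a base-$C$ digit analysis shows that the only $\AP_k$'s in the image arise from combinatorial lines, so the density property transfers verbatim from $V$ to $X_r$.

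To produce the Hales--Jewett object $V$, I would use the Ne\v{s}et\v{r}il--R\"{o}dl partite construction (the second keyword of the paper). The $n$-partite structure of $[k]^n$ given by its coordinates makes this natural: one iteratively builds up the target by amalgamating many copies of the current ``picture'' along a single coordinate part at a time, anchoring the base of the recursion with the Hales--Jewett theorem so as to force a monochromatic line in every $r$-colouring. Throughout the iteration I would maintain, as an invariant, not merely that the hypergraph of combinatorial lines is \emph{linear} (any two lines meet in at most one point), but a stronger quantitative sparsity ensuring that every induced sub-hypergraph admits a small fractional vertex cover, which yields the desired line-free subsets of relative size $\mu$.

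The main obstacle, I expect, is precisely this quantitative density requirement: the parameter $\mu$ can be chosen arbitrarily close to the sharp bound $(k-1)/k$, which forces the induced hypergraph on \emph{every} $Y \subseteq V$ to behave essentially like a near-perfect matching of combinatorial lines. Qualitative high-girth partite constructions yield only some positive independence ratio, and pushing this ratio up to the optimum while simultaneously meeting the Hales--Jewett amalgamation demands that underpin $\vdW(k, r)$ is the delicate balancing act; it likely calls for a parameterised inductive invariant carefully designed with $\mu$ in mind.
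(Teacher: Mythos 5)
Your high-level architecture matches the paper's: reduce to a finite statement block-by-block, pass through a Hales--Jewett analogue, embed $[k]^n\lra\NN$ so that $\AP_k$s in the image come from ``lines'' in the cube, and produce the Hales--Jewett set by a Nešetřil--Rödl partite construction. The block-gluing step at the start is sound, as is the digit-analysis argument that for $C$ large an $\AP_k$ among the images forces every coordinate sequence $(x_1(i),\dots,x_k(i))$ to be a (possibly degenerate) $\AP_k$ in $[k]$. However, two places where the plan as stated would fail, and where the paper supplies the missing ideas, are worth flagging.

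First, the digit analysis does \emph{not} show that $\AP_k$s in the image arise only from combinatorial lines. It shows they arise from sets $L$ in which each coordinate is either constant or a bijection onto $[k]$ given by $(1,\dots,k)$ or $(k,\dots,1)$ --- a strictly weaker conclusion. For example $\{(1,k),(2,k-1),\dots,(k,1)\}\subseteq[k]^2$ maps to an $\AP_k$ but is not a combinatorial line. The paper handles this by introducing quasilines (each coordinate either constant or injective), proving that $\AP_k$s in the image correspond to quasilines, and then enforcing throughout the construction, as part of the definition of a ``picture'', that every quasiline in the cube is in fact a combinatorial line. That invariant is nontrivial to maintain under amalgamation: it is precisely what forces the line systems in each step to be triangle-free \emph{and} tripod-free (a new condition not needed for the classical high-girth construction), and for $k=3$ it additionally forces the base hypergraph $G$ to be $K_4^{(3)-}$-free. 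If you only forbid combinatorial lines in your dense subset $Z$, an $\AP_k$ can survive the transfer.

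Second, you correctly identify getting $\mu$ near $(k-1)/k$ as the delicate point, but the paper's mechanism is not a ``quantitative sparsity'' or ``small fractional cover'' invariant maintained coordinate by coordinate. Instead, the density bound is decoupled from the amalgamation entirely: one fixes once and for all a base $k$-uniform hypergraph $G$ with $\chi(G)>r$ and the \emph{$\mu$-fractional property} (every weighting of $V(G)$ has an independent set capturing a $\mu$-fraction of the weight), where such $G$ is a shift hypergraph $\mathrm{Sh}^{(k)}(n,\ell)$ with $\ell$ chosen according to $\mu$. The partite construction then maintains the single invariant that the projection $\psi\colon P\lra V(G)$ is a hypergraph homomorphism from the line hypergraph of the picture onto $G$; the $\mu$-fractional property pulls back along homomorphisms, so the final cube inherits it automatically. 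This is qualitatively different from a local sparsity argument, and it is the only place where the parameter $\mu$ enters the construction. Without this decoupling --- or an equivalent device --- it is unclear how to push the independence ratio arbitrarily close to $(k-1)/k$ while simultaneously meeting the Hales--Jewett amalgamation demands.
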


We remark that Theorem \ref{thm:main1} does not hold for $\mu>\frac{k-1}{k}$. 
Indeed, every set $X\subseteq \NN$ satisfying condition~\ref{it:14i} must 
contain an $\AP_k$. By taking $Y\subseteq X$ to be an $\AP_k$, we have $|Y|=k$. 
Therefore, the only subset $Z\subseteq Y$ with $|Z|\geq \mu|Y|$ is $Y$ itself. 
So $Y$ has the property~$\Sz(k,\mu)$.

\subsection{Multidimensional version} Gallai and Witt discovered independently 
that van der Waerden's theorem generalises as follows to higher dimensions. 
If $F\subseteq \ZZ^d$ is a finite configuration of $d$-dimensional lattice points 
and $r$ denotes a number of colours, then there is some integer $n=n(F, r)$ such 
that for every $r$-colouring of $[n]^d$ there is a monochromatic homothetic copy
of $F$. This means that there are a vector $v\in\ZZ^d$ and a positive integral 
scaling factor $\lambda$ such that the set $v+\lambda F=\{v+\lambda f\colon f\in F\}$
is contained in $[n]^d$ and all of its points have the same colour. There is also a 
multidimensional version of Szemer\'edi's theorem, first proved by F\"urstenberg 
and Katznelson~\cite{FK78}, which asserts that for every finite 
configuration $F\subseteq \ZZ^d$ and every positive real $\delta$ there exists 
some $n=n(F, \delta)$ such that every set $A\subseteq [n]^d$ of 
size $|A|\ge \delta n^d$ contains a homothetic copy of $F$. 

When formulating a multidimensional version of Theorem~\ref{thm:main1} one can 
make the second clause stronger by forbidding the large subsets $Z$ to contain 
``copies of $F$'' in a sense more general than ``homothetic copies''. 
Given a finite configuration $F\subseteq \ZZ^d$ and a set $Z\subseteq \ZZ^d$
we shall say that $Z$ is {\it $F$-free} if there is no nonzero real scaling 
factor $\lambda$ such that $Z$ contains a congruent copy of $\lambda F$. 

Illustrating the difference between these concepts we consider the sets 
\[
	F=\{(0, 0), (0, 1), (1, 0), (1, 1)\}
	\quad\text{ and } \quad
	F'=\{(1, 0), (-1, 0), (0, 1), (0, -1)\}\,,
\]
both of which are squares in $\ZZ^2$. They fail to be homothetic copies of 
each other, but $F'$ is a congruent copy of $\sqrt{2}F$.

\begin{theorem}\label{thm:main2}
    For every finite configuration $F\subseteq \ZZ^d$ of $|F|=k\ge 3$ points 
    and every real $\mu\in (0,\frac{k-1}{k})$ there is a set of lattice points 
    $X=X(F,\mu)\subseteq \NN^d$ such that
    \begin{enumerate}[label=\rmlabel]
        \item\label{it:15i} for every $r\geq 1$, every $r$-colouring of $X$ 
        		contains a monochromatic homothetic copy of $F$,
        \item\label{it:15ii} but every finite subset $Y\subseteq X$ has an 
        		$F$-free subset $Z\subseteq Y$ of size $|Z|\geq \mu|Y|$.
    \end{enumerate}
\end{theorem}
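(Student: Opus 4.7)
My plan is to construct $X$ via an iterated partite amalgamation in $\NN^d$ in the spirit of Ne\v{s}et\v{r}il-R\"{o}dl, arranged so that the $k$-uniform hypergraph $H_{\mathrm{cong}}$ on $X$ whose edges are the congruent copies of $\lambda F$ (for every $\lambda>0$) behaves \emph{almost like a matching} on every finite restriction $Y\subseteq X$: all but a negligible fraction of the copies of $\lambda F$ lying in $Y$ are pairwise vertex-disjoint. Deleting one vertex per such copy then leaves an independent set of size at least $(1-1/k)|Y|-o(|Y|)$, which exceeds $\mu|Y|$ for any fixed $\mu<\frac{k-1}{k}$ once $|Y|$ is sufficiently large; the small-$|Y|$ regime is handled by spacing the base construction widely enough that small subsets contain no congruent copy of $\lambda F$ at all. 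This would establish \ref{it:15ii}.

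For the Ramsey condition \ref{it:15i} I would iterate the Ne\v{s}et\v{r}il-R\"{o}dl partite construction starting from a base $k$-partite picture whose edges are the homothetic copies of $F$ in an auxiliary finite set in $\NN^d$. Fixing the target number of colours $r$, one inductively amalgamates the current $k$-partite system along the edges of an arrow hypergraph for $k$-uniform hypergraphs; to realise each step geometrically, every copy of the current structure is embedded into $\NN^d$ via a fresh dilation factor $\lambda_i>0$ together with a translation vector $v_i\in\ZZ^d$ chosen from a large finite box of candidates. After sufficiently many rounds this produces a finite $X_r\subseteq\NN^d$ every $r$-colouring of which contains a monochromatic homothetic copy of $F$, and a standard compactness argument then combines the $X_r$ into a single infinite $X\subseteq\NN^d$ that works for every $r$ simultaneously.

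The technical heart of the proof, and the step I expect to pose the main difficulty, is a \emph{genericity lemma} governing the choice of the parameters $(\lambda_i,v_i)$ at each amalgamation: they must be chosen so that the only congruent copies of $\lambda F$ appearing in the amalgamated set are the intended ones, together with those already contained in a single embedded copy of the current system. The classical partite construction needs only to exclude unintended \emph{homothetic} copies of $F$, a linear-algebraic condition that is easy to secure by a generic choice of $(\lambda_i,v_i)$; here, by contrast, one must rule out all rigid motions applied to every positive scaling of $F$, a substantially richer Diophantine constraint. I would address this by a probabilistic or pigeonhole argument within a sufficiently large finite box of candidate parameters, showing that for each potential unintended congruent copy the ``bad'' set of parameters is small, and then taking a union bound over the finitely many relevant copy-types at each step. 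Once this lemma is in place, the near-matching property of $H_{\mathrm{cong}}$ is preserved at every level of the partite construction, and both \ref{it:15i} and \ref{it:15ii} are obtained in tandem.
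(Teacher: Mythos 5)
Your plan takes a genuinely different route from the paper. The paper proves the Hales--Jewett version (Theorem~\ref{thm:main3}) first and then deduces Theorem~\ref{thm:main2} from it via the encoding $\phi(a(1),\dots,a(n))=\sum_{i} T^{2^i}f_{a(i)}$, whose key feature (Claim~\ref{clm:2quasi}) is that congruent copies of $\lambda F$ in the image correspond exactly to \emph{quasilines} in the cube. You propose instead to do the partite construction directly in $\ZZ^d$, which is a legitimate idea, and you are right that the serious obstacle in $\ZZ^d$ is ruling out unintended congruent copies rather than merely unintended homothetic copies. However, the union-bound over ``finitely many relevant copy-types'' is not obviously available because rigid motions form a continuous group; this is precisely what the paper's Lemma~\ref{lem:21} is designed to handle, by showing that approximate congruence forces exact congruence in a finite-dimensional space of quadratic forms, so that the powers $T^{2^i}$ (with their uniquely decomposable sums $2^\nu+2^{\nu'}$) can finish the job.

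The real gap is the density clause~\ref{it:15ii}. The mechanism you propose --- make the congruent-copy hypergraph nearly a matching on every finite restriction, then delete one vertex per edge --- is not what happens in the paper and, more importantly, is in tension with what the partite construction actually produces. The amalgamation step $\Pi\conc\ccL$ threads many lines through each point of the shared music column, so vertex degrees grow at every round; the final hypergraph is far from a matching. The paper instead obtains dense independent sets by quite different means: the base hypergraph $G$ is a shift hypergraph $\mathrm{Sh}^{(k)}(n,\ell)$ with roughly $\binom{n}{k+\ell-1}$ edges on $\binom{n}{\ell}$ vertices (much denser than a matching), which nevertheless enjoys the $\mu$-fractional property (Theorem~\ref{thm:graphpisier}) thanks to the Erd\H{o}s random-permutation argument, and this property is inherited by the final line hypergraph because it admits a homomorphism $\psi_q$ to $G$ (Lemma~\ref{cl:hereditary}). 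Your proposal has no analogue of this mechanism: it is not explained how the base ``auxiliary finite set'' would supply both high chromatic number and independence-density arbitrarily close to $(k-1)/k$, nor how the near-matching property could survive the amalgamation steps whose whole purpose is to weave more and more copies through the same points.
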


\subsection{Combinatorial lines}\label{subsec:cl}
Let us now briefly explain another classical result of Ramsey theory, 
the Hales-Jewett theorem. Given integers $k\ge 2$ and $n\ge 1$ 
we refer to $[k]^n$ as the {\it $n$-dimensional Hales-Jewett cube over $[k]$}. 
Identifying $[k]^1$ with $[k]$ we can regard~$[k]$ itself as a one-dimensional cube.
A map $\eta\colon [k]\longrightarrow [k]^n$ is called a {\it combinatorial embedding}
if there exists a partition of $[n]=C\dcup M$ of the coordinate set $[n]$ into a set 
of {\it constant coordinates}~$C$ and a nonempty set of 
{\it moving coordinates}~$M$ such that, writing $\eta(i)=(u_{i1}, \dots, u_{in})$ 
for each $i\in [k]$, we have $u_{1c}=\dots=u_{kc}$ for every $c\in C$ 
and $u_{im}=i$ whenever $i\in [k]$ and $m\in M$. The condition $M\ne\vn$ ensures 
that combinatorial embeddings are injective. 

A {\it combinatorial line} is the image of such a combinatorial 
embedding. For instance, $\{111, 121, 131\}$ is a combinatorial 
line in $[3]^3$, while $\{113, 122, 131\}$ is not. 
Now the result of Hales and Jewett~\cite{HJ63} asserts that given $k\ge 2$ 
and $r\ge 1$ there exists a dimension $n=\mathrm{HJ}(k, r)$
such that for every colouring $f\colon [k]^n\lra [r]$ there exists a monochromatic 
combinatorial line. An even more profound result, first obtained by F\"{u}rstenberg 
and Katznelson~\cite{FK91} by methods from ergodic theory, asserts that the 
corresponding density statement holds as well. 
That is, for every integer $k\ge 2$ and every real $\delta>0$ there exists a dimension 
$n=\mathrm{DHJ}(k, \delta)$ such that every set $A\subseteq [k]^n$ of size 
$|A|\ge \delta k^n$ contains a combinatorial line. Nowadays some elementary 
combinatorial proofs of this so-called {\it density Hales-Jewett theorem} are known, 
see~\cites{PM12, DKT14}.
  
Our next result relates to the Hales Jewett-theorem in a similar way as 
Theorem~\ref{thm:main1} relates to van der Waerden's theorem. In order to render 
its second clause on the existence of dense subsets without lines sufficiently 
strong for our intended application, we will work with the following relaxed 
line concept.  

\begin{dfn}\label{dfn:quasi}
	Let $L$ be a $k$-element subset of the Hales-Jewett cube $[k]^n$. 
	If for every coordinate direction $\nu\in [n]$ the $\nu^{\mathrm{th}}$ 
	entries of the points in $L$ are either identical or mutually 
	distinct, then $L$ is called a {\it quasiline}. 
\end{dfn}

In general, every combinatorial line is a quasiline, but not the other way around. 
For instance, $\{124, 223, 322, 421\}$ is a quasiline in $[4]^3$, but not a 
combinatorial line. In the special case $k=3$ it may be observed that if one 
identifies $[3]$ with the three-element field and views $[3]^n$ as a vector space
over that field, then quasilines are the same as one-dimensional affine subspaces
(or arithmetic progressions of length three). However, this does not generalise
to larger prime numbers. 

\begin{theorem}\label{thm:main3}
	For all integers $k\ge 3$, $r\geq 1$ and all reals $\mu\in(0, \frac{k-1}{k})$ 
	there exist a dimension $n$ and a set of points 
	$\cX=\cX(k,r,\mu)\subseteq [k]^n$ such that
		\begin{enumerate}[label=\rmlabel]
		\item\label{it:17i} for every $r$-colouring of $\cX$ there is a monochromatic 
			combinatorial line,
		\item\label{it:17ii} but every $\cY\subseteq \cX$ contains a 
			subset $\cZ\subseteq \cY$ of size $|\cZ|\geq \mu |\cY|$ such 
			that $\cZ$ contains no quasiline.
	\end{enumerate}
\end{theorem}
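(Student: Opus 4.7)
My plan is to construct $\cX$ via a partite amalgamation in the spirit of Ne\v{s}et\v{r}il--R\"odl, analogous to the proof of Theorem~\ref{thm:main1} but adapted from integer intervals to Hales--Jewett cubes. Fix $N = \mathrm{HJ}(k, r)$, so every $r$-colouring of $[k]^N$ contains a monochromatic combinatorial line, and enumerate all combinatorial lines of $[k]^N$ as $L_1, \dots, L_M$. I will build a sequence of ``pictures'' $\cA_0, \cA_1, \dots, \cA_M = \cX$, each a subset $\cA_t \subseteq [k]^{n_t}$ equipped with a structure-preserving projection $\pi_t \colon \cA_t \to [k]^N$ (meaning the $\pi_t$-image of any combinatorial line of $\cA_t$ is a point or combinatorial line of $[k]^N$).

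At each stage I maintain two invariants:
\begin{enumerate}[label=\rmlabel]
\item every $\cY \subseteq \cA_t$ contains a quasiline-free $\cZ \subseteq \cY$ with $|\cZ| \geq \mu|\cY|$;
\item for every $r$-colouring of $\cA_t$ there exist $j \leq t$ and a combinatorial line $L \subseteq \pi_t^{-1}(L_j)$ that is monochromatic.
\end{enumerate}
The base $\cA_0$ is chosen as a ``spread out'' subset of some cube containing no quasilines at all, so (i) is trivial and (ii) is vacuous. For the inductive step, I build $\cA_{t+1}$ by taking many copies of $\cA_t$ indexed by an auxiliary Hales--Jewett cube $[k]^{M_{t+1}}$ and amalgamating them along the target line $L_{t+1}$ via an appropriate combinatorial embedding $[k]^{n_t}\times[k]^{M_{t+1}} \hookrightarrow [k]^{n_{t+1}}$. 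A pigeonhole applied to the auxiliary cube (via Hales--Jewett) then ensures that any $r$-colouring of $\cA_{t+1}$ either is already handled inside a single copy of $\cA_t$ (invoking invariant (ii) at stage $t$) or else yields a monochromatic combinatorial line projecting onto $L_{t+1}$. After $M$ steps, invariant (ii) for $\cA_M$ combined with the Hales--Jewett property of $[k]^N$ gives clause \ref{it:17i} of the theorem.

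The hard part will be preserving the density invariant (i) through each amalgamation. Quasilines in $\cA_{t+1}$ split into \emph{internal} ones (contained in a single copy of $\cA_t$) and \emph{transversal} ones (using points from several copies). The internal quasilines are controlled inductively by applying the density hypothesis to each copy separately. The transversal ones necessarily project via $\pi_{t+1}$ to quasilines inside $L_{t+1}$, and here the bound $\mu < (k-1)/k$ enters essentially: since each quasiline has exactly $k$ points, one has room to destroy every transversal quasiline by removing a single representative per line, contributing at most a $1/k$-fraction to the vertex cover. The amalgamation parameters must be chosen so that this extra cover, summed across copies and averaged over an arbitrary test set $\cY \subseteq \cA_{t+1}$, fits inside the slack $(k-1)/k - \mu$. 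Concretely, I would work with a decreasing sequence of thresholds $\mu = \mu_M < \mu_{M-1} < \cdots < \mu_0 < (k-1)/k$ chosen so that the cumulative density loss over the $M$ amalgamations remains below the budget, and carry out the analysis of transversal quasilines in the auxiliary cube using a supersaturation-type estimate for quasilines in $[k]^{M_{t+1}}$ lying over a fixed combinatorial line. This bookkeeping of the transversal quasilines, rather than the Ramsey step, is where I expect the main technical difficulty to lie.
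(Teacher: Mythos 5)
Your proposal gets the broad shape right---partite amalgamation, a sequence of pictures, a colouring invariant propagated step by step---but two essential ideas are missing, and their absence leaves a genuine gap.

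First, the density clause~\ref{it:17ii} is not obtained in the paper by a step-by-step budget of the kind you describe. The construction begins by obtaining an auxiliary $k$-uniform hypergraph $G$ with $\chi(G)>r$ that already has the $\mu$-fractional property (every nonnegative weighting of $V(G)$ has an independent set carrying a $\mu$-fraction of the weight); this is Theorem~\ref{thm:graphpisier}, proved via shift hypergraphs. The entire amalgamation is then performed \emph{over $G$}, not over $[k]^N$: the picture maps $\psi_i$ are hypergraph homomorphisms from the line hypergraph of $P_i$ onto $G$, and since the $\mu$-fractional property passes to homomorphic preimages (Lemma~\ref{cl:hereditary}), the final picture inherits it for free, with no per-step loss at all. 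Your scheme, which amalgamates over $[k]^N$ and carries a decreasing chain $\mu_M<\cdots<\mu_0$, faces a budget problem: $M$ is the number of combinatorial lines in $[k]^N$, which is enormous, and no positive per-step loss can be afforded over that many steps.

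Second, your treatment of ``transversal'' quasilines is quantitatively unsound. Deleting one representative per transversal quasiline does not yield a cover of size at most $|\cY|/k$: transversal quasilines overlap, and a given $\cY$ can meet many of them, so the union of chosen representatives can be a constant fraction of $\cY$ or more. The paper's resolution is structural rather than quantitative. Proposition~\ref{prop:amal} shows that if the line system $\ccL$ used in the amalgamation step contains no triangles and no tripods (such systems of large chromatic number exist, Theorem~\ref{thm:tripod}), and if $G$ is moreover $K_4^{(3)-}$-free when $k=3$ (Lemma~\ref{lem:k43-}), then $\Pi\conc\ccL$ is again a picture: every quasiline is contained in a single standard copy of $\Pi$ and projects via $\psi$ onto an edge of $G$. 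In other words, transversal quasilines simply do not occur. Without some such combinatorial guarantee your induction cannot close, and the supersaturation estimate you invoke has no clear route to supplying it.
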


In fact, the set $\cX$ we construct will also have the property 
that every quasiline $L\subseteq \cX$ is a combinatorial line. 
 
\subsection*{Organisation}
We shall show in the next section that Theorem~\ref{thm:main3} implies 
our other results, so that it will only remain to prove Theorem~\ref{thm:main3}.
The preliminary Section~\ref{sec:prelim} deals with auxiliary hypergraphs 
and restricted versions of the Hales-Jewett theorem. 
The proof of Theorem~\ref{thm:main3} itself is based on the partite construction 
method (see~\cites{NR79, FGR87}) and will be given in Section~\ref{sec:construction}. 
We conclude with some further problems and results in Section~\ref{sec:remarks}.
 
\section{Implications}
\label{sec:mainproof}

In this section we assume that Theorem~\ref{thm:main3} is true and show how 
to derive Theorem~\ref{thm:main2} from it. Since Theorem~\ref{thm:main1} agrees
with the case $d=1$ and $F=[k]$ of Theorem~\ref{thm:main2}, this means that we will 
only have to prove Theorem~\ref{thm:main3} in later sections. 

We indicate the Euclidean norm in $\RR^d$ by $\|\cdot\|$ 
and $(x, y)\longmapsto x\cdot y$ denotes the standard scalar product in the 
Euclidean space $\RR^d$.   
Here is a simple statement that will later assist us in the selection of a ``sufficiently large'' number. 

\begin{lemma}\label{lem:21}
	For every finite configuration $F\subseteq \RR^d$ there is a positive
	real $\eps=\eps(F)$ such that for all functions $\rho$, $\sigma$ 
	from $F$ to $F$ the following holds: If there is a real $q$ with 
		\begin{equation}\label{eq:1832}
		\Big| 
		q\|f'-f''\|^2
		-
		\bigl(\rho(f')-\rho(f'')\bigr)\cdot \bigl(\sigma(f')-\sigma(f'')\big)
		\Big|
		\le 
		\eps
		\quad 
		\text{for all $f', f''\in F$},
	\end{equation} 
	then there actually is a real $\overline{q}$ with 
		\begin{equation}\label{eq:1833}
		\overline{q}\|f'-f''\|^2
		=
		\bigl(\rho(f')-\rho(f'')\bigr)\cdot \bigl(\sigma(f')-\sigma(f'')\bigr)
		\quad 
		\text{for all $f', f''\in F$.}
	\end{equation}
\end{lemma}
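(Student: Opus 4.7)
The plan is to exploit the finiteness of $F$---which makes the set of pairs of functions $\rho,\sigma\colon F\to F$ finite as well---to extract a uniform positive ``gap'' below which the approximate equation \eqref{eq:1832} cannot hold unless the exact equation \eqref{eq:1833} does. First I would reformulate both equations pairwise: the pairs with $f'=f''$ make both sides vanish, so only the finitely many pairs $f'\neq f''$ are substantive, and for those one may divide by $\|f'-f''\|^2$. Writing
\[
	R_{\rho,\sigma}(f',f'') := \frac{\bigl(\rho(f')-\rho(f'')\bigr)\cdot\bigl(\sigma(f')-\sigma(f'')\bigr)}{\|f'-f''\|^2},
\]
equation \eqref{eq:1833} admits a solution $\overline{q}$ if and only if the quantity $R_{\rho,\sigma}(f',f'')$ is independent of the choice of $f'\neq f''$.

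Next, for each of the (finitely many) pairs $(\rho,\sigma)$ for which $R_{\rho,\sigma}$ is \emph{not} constant on pairs $f'\neq f''$, I would form the discrepancy
\[
	D(\rho,\sigma) := \max_{f'\neq f''} R_{\rho,\sigma}(f',f'') - \min_{f'\neq f''} R_{\rho,\sigma}(f',f'') > 0,
\]
and let $\delta > 0$ be the minimum of these finitely many positive numbers (if no such pair $(\rho,\sigma)$ exists, then \eqref{eq:1833} is always solvable and any positive $\eps$ works). Writing $m := \min_{f'\neq f''}\|f'-f''\|^2 > 0$, my choice would then be $\eps := \delta m/3$, which depends only on $F$.

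To verify that this $\eps$ works, I would argue by contrapositive. Suppose \eqref{eq:1832} holds for some real $q$ but \eqref{eq:1833} admits no $\overline{q}$. Then $R_{\rho,\sigma}$ is non-constant, so $D(\rho,\sigma)\geq\delta$. On the other hand, dividing \eqref{eq:1832} by $\|f'-f''\|^2\geq m$ yields $|q - R_{\rho,\sigma}(f',f'')|\leq\eps/m$ for every pair $f'\neq f''$, and the triangle inequality gives $D(\rho,\sigma)\leq 2\eps/m = 2\delta/3 < \delta$, a contradiction. There is no genuinely delicate step in this argument; the only point worth emphasising is that the finiteness of $F$ bounds the set of function pairs $(\rho,\sigma)$, which is what makes a uniform choice of $\eps$ possible at all---an existence statement, with no explicit formula needed.
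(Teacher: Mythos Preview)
Your proof is correct and follows essentially the same strategy as the paper's: for each pair $(\rho,\sigma)$ with no exact solution, extract a positive gap, then take the minimum over the finitely many such pairs. The only cosmetic difference is in how the gap is measured: the paper phrases it as the Euclidean distance in $\RR^{F\times F}$ from the vector $w=\bigl((\rho(f')-\rho(f''))\cdot(\sigma(f')-\sigma(f''))\bigr)_{(f',f'')}$ to the line $\RR v$ spanned by $v=\bigl(\|f'-f''\|^2\bigr)_{(f',f'')}$, whereas you normalise by $\|f'-f''\|^2$ and use the range of the resulting ratios. Your version is marginally more explicit (it gives a concrete formula for $\eps$), but neither argument is deeper than the other.
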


\begin{proof}
	We shall show first that for every fixed pair $(\rho, \sigma)$ there is such 
	a constant $\eps_{\rho\sigma}$. If~\eqref{eq:1833} holds for some 
	$\overline{q}$ there is nothing to show, so we can assume that no such~$\overline{q}$
	exists. This state of affairs can be expressed in the following 
	way in the vector space $\RR^{F\times F}$. Let $v\in \RR^{F\times F}$ be the vector 
	with $(f', f'')$-entry $\|f'-f''\|^2$ for every pair $(f', f'')\in F^2$ and, 
	similarly, let $w$ be the vector with $(f', f'')$-entry 
	$\bigl(\rho(f')-\rho(f'')\bigr)\cdot \bigl(\sigma(f')-\sigma(f'')\bigr)$. 
	The absence of $\overline{q}$ means that $w$ does not belong to 
	the subspace $V=\RR v$ of $\RR^{F\times F}$ generated by~$v$. 
	Hence, there is some $\eps_{\rho\sigma}>0$ such that the distance 
	of $w$ from any point in $V$ exceeds~$|F|\eps_{\rho\sigma}$. 
	Now if~\eqref{eq:1832} held for 
	some $q\in\RR$ and for $\eps_{\rho\sigma}$ instead of $\eps$, then 
		\[
		\|qv-w\|^2
		=
		\sum_{(f', f'')\in F^2}
		\Big| 
		q\|f'-f''\|^2
		-
		\bigl(\rho(f')-\rho(f'')\bigr)\cdot \bigl(\sigma(f')-\sigma(f'')\bigr)
		\Big|^2
		\le 
		|F|^2\eps_{\rho\sigma}^2
	\]
		would contradict the choice of $\eps_{\rho\sigma}$. This concludes the proof 
	that for every pair of functions~$(\rho, \sigma)$ there is an appropriate constant 
	$\eps_{\rho\sigma}$. 
	Since there are only finitely many such pairs $(\rho, \sigma)$, the number 
	$\eps=\min\limits_{\rho\sigma}\eps_{\rho\sigma}$ is as desired. 
\end{proof}

We proceed with a finitary version of Theorem \ref{thm:main2}.

\begin{proposition}\label{thm:finiteint}
	Given a finite configuration $F\subseteq \ZZ^d$ of $k=|F|\ge 3$ points, 
	a number of colours $r\ge 1$, and a real $\mu\in(0, \frac{k-1}k)$ there 
	exists a finite set $X=X(F, r, \mu)\subseteq \NN^d$ such that
	\begin{enumerate}[label=\rmlabel]
    \item\label{it:2i} for every $r$-colouring of $X$ there is a monochromatic 
    		homothetic copy of $F$
    \item\label{it:2ii} and every $Y\subseteq X$ has an $F$-free subset $Z\subseteq Y$ 	
    		of size $|Z|\geq \mu|Y|$.
\end{enumerate}
\end{proposition}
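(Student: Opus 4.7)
The plan is to derive Proposition~\ref{thm:finiteint} from Theorem~\ref{thm:main3} by transporting the resulting Hales-Jewett configuration into $\NN^d$ via a linear embedding under which combinatorial lines become homothetic copies of $F$ and every congruent copy of any scaling of $F$ in the image descends to a quasiline in the source.

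Fix an enumeration $F=\{f_1,\dots,f_k\}$ and let $\eps=\eps(F)>0$ be the constant of Lemma~\ref{lem:21}. Apply Theorem~\ref{thm:main3} to $(k,r,\mu)$ to obtain a dimension $n$ and $\cX\subseteq[k]^n$ satisfying both enumerated clauses together with the extra property that every quasiline in $\cX$ is already a combinatorial line. Choose an integer $N$ sufficiently large in terms of $n$, $F$, and $\eps$, and define $\Phi\colon[k]^n\to\ZZ^d$ by $\Phi(x_1,\dots,x_n)=\sum_{\nu=1}^n N^\nu f_{x_\nu}$. Put $X=\Phi(\cX)$, translated into $\NN^d$; the wide separation of the weights $N^\nu$ ensures that $\Phi$ is injective on $[k]^n$.

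Property~\ref{it:2i} is then immediate: a combinatorial line $L\subseteq\cX$ with moving-coordinate set $M_L$ and constants $(c_\nu)_{\nu\notin M_L}$ has image
$$\Phi(L)=\Bigl(\sum_{\nu\notin M_L}N^\nu f_{c_\nu}\Bigr)+\Bigl(\sum_{\nu\in M_L}N^\nu\Bigr)F,$$
a homothetic copy of $F$ in $X$. For Property~\ref{it:2ii}, given $Y\subseteq X$ set $\cY=\Phi^{-1}(Y)\cap\cX$ (of size $|Y|$ by injectivity), apply Theorem~\ref{thm:main3}\ref{it:17ii} to $\cY$ to obtain $\cZ\subseteq\cY$ with $|\cZ|\ge\mu|\cY|$ containing no quasiline, and set $Z=\Phi(\cZ)$, so $|Z|=|\cZ|\ge\mu|Y|$. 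It remains to verify that $Z$ is $F$-free, and this is the main obstacle.

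Suppose towards a contradiction that $Z$ contained a congruent copy $\{v+\lambda Rf_j:j\in[k]\}$ of $\lambda F$ with $\lambda\ne 0$ and $R\in O(d)$, arising from preimages $p_1,\dots,p_k\in\cZ$, and define $\chi^{(\nu)}\colon F\to F$ by $\chi^{(\nu)}(f_j)=f_{p_j(\nu)}$. Subtracting the identities $\Phi(p_j)=v+\lambda Rf_j$ pairwise yields
$$\sum_{\nu=1}^n N^\nu\bigl(\chi^{(\nu)}(f_j)-\chi^{(\nu)}(f_{j'})\bigr)=\lambda R(f_j-f_{j'})\qquad\text{for all }j,j'\in[k].$$
Because each difference $\chi^{(\nu)}(f_j)-\chi^{(\nu)}(f_{j'})$ lies in the bounded finite set $F-F$ and $N$ is much larger than $n\,\diam(F)/\eps$, the identity can be analysed digit-by-digit in base~$N$. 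The largest index $\nu^*$ on which some $\chi^{(\nu^*)}$ acts non-constantly must be common to all pairs (otherwise $\lambda$ would be pinned down to two incompatible approximate magnitudes), and the top-order approximation forces $\|\chi^{(\nu^*)}(f_j)-\chi^{(\nu^*)}(f_{j'})\|^2$ to agree with $(\lambda/N^{\nu^*})^2\|f_j-f_{j'}\|^2$ to within $\eps$. Lemma~\ref{lem:21}, applied with $\rho=\sigma=\chi^{(\nu^*)}$, upgrades this approximation to an exact similarity identity; since $\chi^{(\nu^*)}$ is non-constant and maps into the finite set $F$, the only possibility is that $\chi^{(\nu^*)}$ is a bijection of $F$. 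Subtracting the top-order term and iterating the argument on the residual identity classifies every $\chi^{(\nu)}$ as either constant or a bijection of $F$, so $\{p_1,\dots,p_k\}$ is a quasiline in $\cX$, contradicting the choice of $\cZ$. This level-by-level decoupling via the large base $N$, together with the simultaneous use of Lemma~\ref{lem:21}, is the heart of the argument and its main difficulty.
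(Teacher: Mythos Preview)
Your overall strategy---transport the Hales--Jewett configuration to $\NN^d$ via a weighted sum and show that congruent scaled copies of $F$ pull back to quasilines---matches the paper's, and your treatment of clause~\ref{it:2i} is fine. The gap is in the iteration for clause~\ref{it:2ii}. After the first step you correctly deduce that $\chi^{(\nu^*)}$ is an isometry of $F$, say $\chi^{(\nu^*)}(f)=R'f+v'$ for some $R'\in O(d)$. The residual identity then reads
\[
	\sum_{\nu<\nu^*} N^\nu\bigl(\chi^{(\nu)}(f_j)-\chi^{(\nu)}(f_{j'})\bigr)
	=(\lambda R - N^{\nu^*}R')(f_j-f_{j'})\,.
\]
For $d\ge 2$ the linear map $\lambda R - N^{\nu^*}R'$ is in general \emph{not} a scalar multiple of an orthogonal map, so at the next level one no longer gets an approximate relation of the form $\|\chi^{(\nu)}(f_j)-\chi^{(\nu)}(f_{j'})\|^2\approx q\|f_j-f_{j'}\|^2$, and Lemma~\ref{lem:21} with $\rho=\sigma=\chi^{(\nu)}$ cannot be invoked. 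Your ``digit-by-digit'' picture is only valid for $d=1$, where $O(1)=\{\pm 1\}$ forces the residual to again be a scalar multiple of $f_j-f_{j'}$ (compare the paper's Remark~\ref{rem:2}).

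The paper circumvents this by two coupled changes. First, it uses weights $T^{2^\nu}$ rather than $N^\nu$; expanding $\|\phi(a_i)-\phi(a_j)\|^2$ then produces exponents $2^\mu+2^{\mu'}$, and every integer has at most one representation as such a sum, so each cross term $(f_{a_i(\mu)}-f_{a_j(\mu)})\cdot(f_{a_i(\mu')}-f_{a_j(\mu')})$ can be isolated separately. Second, it applies Lemma~\ref{lem:21} in its full strength with possibly different $\rho$ and $\sigma$, establishing~\eqref{eq:PLG4} for every pair $(\nu,\nu')$; only the diagonal case $\nu=\nu'$ is then needed to conclude that each coordinate map is constant or a bijection. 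With your weights $N^\nu$ the squared-norm exponents $\nu+\nu'$ collide (e.g.\ $1+3=2+2$), so the analogous isolation fails. The quickest repair of your write-up is to switch to weights $N^{2^\nu}$ and follow the paper's squared-norm argument rather than iterating on the vector identity.
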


\begin{proof}
	By translating $F$ we may assume that $F\subseteq \NN^d$.
	Let $F=\{f_1, \dots, f_k\}$ enumerate the points of $F$. Owing to 
	Theorem~\ref{thm:main3} there are a natural number $n$ and a set $\cX\subseteq [k]^n$
	such that 
		\begin{enumerate}[label=\alabel]
    \item\label{it:2a} for every $r$-colouring of $\cX$ there is a monochromatic 
    		combinatorial line
    \item\label{it:2b} and every $\cY\subseteq \cX$ possesses a 
    		subset $\cZ\subseteq \cY$ of size $|\cZ|\geq \mu |\cY|$ not containing 
			any quasilines.
	\end{enumerate}
		
	Let $\eps=\eps(F)>0$ be the constant obtained in Lemma~\ref{lem:21},
	set $s=\max\{\|f_i\|\colon i\in [k]\}$, 
	choose $T\gg n, s, \eps^{-1}$ sufficiently large, and consider the map 
		\begin{align*}
		\phi\colon \cX &\longrightarrow \ZZ^d \\
		(a(1), \dots, a(n)) & \longmapsto \sum_{i=1}^n T^{2^i}f_{a(i)}\,.
	\end{align*}
		
	Because of $T\gg s$ this map is injective. 
	We shall show that the image of $\phi$, i.e., the set $X=\phi[\cX]$, 
	has the desired properties. 
	
	Beginning with~\ref{it:2i} we look at an
	arbitrary $r$-colouring $\gamma\colon X\lra [r]$. We need to exhibit a 
	monochromatic homothetic copy of $F$.
	By~\ref{it:2a} applied to the $r$-colouring $\gamma\circ\phi$ of $\cX$ 
	there is a combinatorial embedding $\eta\colon [k]\lra \cX$ such that 
	$\gamma\circ\phi\circ \eta$ is a constant function from~$[k]$ to~$[r]$.
	The image of $\phi\circ \eta$ is clearly monochromatic and one confirms 
	easily that it is a homothetic copy of $F$.
	
	The proof of part~\ref{it:2ii} hinges on the fact that all copies of $F$ of 
	the kind we want to exclude correspond to quasilines in the Hales-Jewett cube.
	 
	\begin{claim}\label{clm:2quasi}
		Let $L\subseteq \cX$ be a set of $k$ points. If $\phi[L]$ is congruent 
		to $\lambda F$ for some nonzero real scaling factor $\lambda$, then $L$ is 
		a quasiline.
	\end{claim}
	
	In the special case relevant for Theorem~\ref{thm:main1} this has a fairly 
	simple reason briefly sketched in Remark~\ref{rem:2} below. In the general 
	case we argue as follows. 
	
	\begin{proof}[Proof of Claim~\ref{clm:2quasi}]
		Enumerate $L=\{a_1, \dots, a_k\}$ in such a way that the points $\phi(a_i)$
		in $\ZZ^d$ satisfy 
				\[
			\|\phi(a_i)-\phi(a_j)\|=\lambda \|f_i-f_j\|
		\]
				for all $i, j\in [k]$.  
		Writing $a_i=(a_i(1), \dots, a_i(n))$ for every $i\in [k]$ we contend that 
				\begin{equation}\tag{$\star$}\label{eq:PLG4}
  			\parbox{\dimexpr\linewidth-6em}{    \strut
    \it
    for all $\nu, \nu'\in [n]$ there is a real $q_{\nu,\nu'}$ such that 
			\[
				(f_{a_i(\nu)}-f_{a_j(\nu)})\cdot (f_{a_i(\nu')}-f_{a_j(\nu')})
				=
				q_{\nu,\nu'}\|f_i-f_j\|^2
			\]
						holds for all $i, j\in [k]$.
    \strut
  }
\end{equation}

		Assume for the sake of contradiction that this fails and fix a 
		counterexample $(\nu, \nu')$ for which $2^\nu+2^{\nu'}$ is maximal. 
		It is important to note here that this condition determines the 
		pair $\{\nu, \nu'\}$ uniquely, because every integer can be written 
		in at most one way as a sum of two (identical or distinct) powers of two. 
		Setting 
		$D=\{(\mu, \mu')\in [n]^2\colon 2^\mu+2^{\mu'}>2^\nu+2^{\nu'}\}$ 
		our extremal choice of $(\nu, \nu')$ ensures that for every pair 
		$(\mu, \mu')\in D$ there exists an appropriate constant $q_{\mu,\mu'}$.
		Now for all $i, j\in [k]$ we have 
				\begin{align*}
			\lambda^2 \|f_i-f_j\|^2
			&=
			\|\phi(a_i)-\phi(a_j)\|^2
			=
			\bigg\| \sum_{\mu\in [n]} T^{2^\mu}(f_{a_i(\mu)}-f_{a_j(\mu)})\bigg\|^2 \\
			&=
			\sum_{(\mu, \mu')\in D} T^{2^\mu+2^{\mu'}}
				(f_{a_i(\mu)}-f_{a_j(\mu)})\cdot (f_{a_i(\mu')}-f_{a_j(\mu')})\\
			&\phantom{\sum_{(\mu, \mu')\in D}}+(2-\delta_{\nu,\nu'}) T^{2^\nu+2^{\nu'}}
				(f_{a_i(\nu)}-f_{a_j(\nu)})\cdot(f_{a_i(\nu')}-f_{a_j(\nu')})
			+O(T^{2^\nu+2^{\nu'}-1})\,,
		\end{align*}
		where $\delta$ denotes Kronecker's delta and the implied constant depends 
		only on $n$ and $s$. Simplifying the sum over $D$ on the right side with 
		the help of~\eqref{eq:PLG4} we see that the number
		\[
			q
			=
			\frac{\lambda^2-\sum_{(\mu, \mu')\in D}T^{2^\mu+2^{\mu'}}q_{\mu,\mu'}}
			{(2-\delta_{\nu,\nu'}) T^{2^\nu+2^{\nu'}}}
		\]
				satisfies
				\[
			q\|f_i-f_j\|^2
			=
			(f_{a_i(\nu)}-f_{a_j(\nu)})\cdot (f_{a_i(\nu')}-f_{a_j(\nu')})+O(T^{-1})
		\]
				for all $i, j\in [k]$. In terms of the functions $\rho$ and $\sigma$
		from $F$ to $F$ defined by $\rho(f_i)=f_{a_i(\nu)}$ 
		and $\sigma(f_i)=f_{a_i(\nu')}$ for all $i\in [k]$ this means
				\[
			q\|f_i-f_j\|^2
			=
			\bigl(\rho(f_i)-\rho(f_j)\bigr)\cdot \bigl(\sigma(f_i)-\sigma(f_j)\bigr)
			+O(T^{-1})\,.
		\]
				But due to $T\gg n, s, \eps^{-1}$ and our choice 
		of $\eps$ this implies that there is a constant $q_{\nu,\nu'}$ 
		such that 
		\[
			q_{\nu,\nu'}\|f_i-f_j\|^2
			=
			\bigl(\rho(f_i)-\rho(f_j)\bigr)\cdot \bigl(\sigma(f_i)-\sigma(f_j)\bigr)
		\]
		holds for all $i, j\in [k]$. That is, $q_{\nu,\nu'}$ has the property 
		demanded by~\eqref{eq:PLG4} and, thereby, the proof of~\eqref{eq:PLG4}
		is complete. 
		
		In the special case $\nu'=\nu$ we obtain 
		\[
			q_{\nu,\nu}\|f_i-f_j\|^2
			=
			\|f_{a_i(\nu)}-f_{a_j(\nu)}\|^2
		\]
				for all $i, j\in [k]$.
		Hence, for every fixed $\nu$ the map $f_i\longmapsto f_{a_i(\nu)}$
		sends $F$ to a congruent copy of $\sqrt{q_{\nu,\nu}}F$. In the special 
		case $q_{\nu,\nu}=0$ this means $a_1(\nu)=\dots=a_k{(\nu)}$ and 
		if $q_{\nu,\nu}\ne 0$ we have, in particular, $\{a_i(\nu)\colon i\in [k]\}=[k]$.
		For these reasons, $L$ is indeed a quasiline. 
	\end{proof}

	After this preparation, part~\ref{it:2ii} of the theorem is straightforward. 
	Let an arbitrary set $Y\subseteq X$ be given. Due to~\ref{it:2b} the set 
	$\cY=\phi^{-1}[Y]\subseteq \cX$ has a subset $\cZ\subseteq \cY$ 
	of size $|\cZ|\geq \mu |\cY|$ containing no quasilines. By Claim~\ref{clm:2quasi}
	the set $Z=\phi(\cZ)$ is $F$-free and, since $\phi$ is injective, 
	it is also sufficiently dense. 
\end{proof}

\begin{remark}\label{rem:2}
	Here is a simpler proof for the special case $d=1$ and $F=[k]$ 
	of Claim~\ref{clm:2quasi}. Again we write $L=\{a_1, \dots, a_k\}$
	and $a_i=(a_i(1), \dots, a_i(n))$ for every $i\in [k]$. 
	Since $\phi(a_1), \dots, \phi(a_k)$ is an $\AP_k$, we have 
		\[
		\sum_{\nu=1}^n T^{2^\nu}\bigl(a_{i+1}(\nu)-2a_i(\nu)+a_{i-1}(\nu)\bigr)
		=
		\phi(a_{i+1})-2\phi(a_i)+\phi(a_{i-1})
		=0
	\]
		for every $i\in [2, k-1]$. Thus a sufficiently large choice of $T$ guarantees 
	that for every $\nu\in [n]$ the $k$-tuple 
	$A_\nu=(a_1(\nu), \dots, a_k(\nu))\in [k]^k$
	is a (possibly degenerate) arithmetic progression of length $k$. 
	So $A_\nu$ either consists of $k$ equal numbers, or it is one of 
	the two $k$-tuples $(1, 2, \dots, k)$ or $(k, k-1, \dots, 1)$.
	In particular, $L$ is indeed a quasiline.  
\end{remark}

It remains to deduce Theorem~\ref{thm:main2} from the finitary version we have 
just obtained. 

\begin{proof}[Proof of Theorem \ref{thm:main2}]
	For every $r\ge 1$ let $X_r=X(F, r, \mu)$ be the set generated by 
	Proposition~\ref{thm:finiteint}. Take a sequence $(v_r)_{r\ge 1}$ of vectors 
	in $\NN^d$ such that $\|v_r\|$ tends to infinity sufficiently fast and 
	define
		\[
		X=\bigdcup_{r\ge 1}(v_r+X_r)\,.
	\]
		Provided that this set satisfies the conclusion of the following claim, we shall 
	show later that it has the properties demanded by Theorem~\ref{thm:main2}. 
	
	\begin{claim}\label{clm:vrfast}
		An appropriate choice of $(v_r)_{r\ge 1}$ ensures that 
		if a configuration $F'\subseteq X$ is congruent to $\lambda F$ 
		for some nonzero real $\lambda$, then $F'\subseteq v_r+X_r$ holds 
		for some $r\ge 1$.
	\end{claim}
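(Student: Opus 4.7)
The plan is to choose the translates $v_r$ inductively so that the clusters $v_r + X_r$ are separated by distances growing much faster than their diameters, and then to exploit the fact that any $F'$ congruent to $\lambda F$ has the \emph{fixed} maximum-to-minimum pairwise distance ratio $\rho = \diam(F)/\delta$, where $\delta = \min_{f \neq f' \in F}\|f - f'\|$. Since $\rho$ is scale-invariant in $\lambda$, the argument will work uniformly in the choice of $\lambda \neq 0$.

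Set $M_r = \max_{x \in X_r}\|x\|$ (after replacing $M_r$ by $\max_{s \leq r} M_s$ we may assume that $(M_r)$ is nondecreasing) and pick $v_r \in \NN^d$ inductively (say, on the first coordinate axis) with
\[
	\|v_r\| \geq 10\rho\bigl(\|v_{r-1}\| + M_r + 1\bigr),
\]
using the convention $\|v_0\| = 0$. This renders the clusters $v_r + X_r$ pairwise disjoint, forces $\|y - x\| \geq \|v_r\|/2$ whenever $r' < r$, $x \in v_{r'} + X_{r'}$ and $y \in v_r + X_r$, and keeps each cluster's diameter at most $2M_r$.

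Now suppose $F' \subseteq X$ is congruent to $\lambda F$ for some $\lambda \neq 0$ and meets clusters in at least two layers $r_* < r^*$. The pairwise distances in $F'$ all lie in $[|\lambda|\delta,\,|\lambda|\diam(F)]$, so their max-to-min ratio is bounded by $\rho$. Picking $p \in F' \cap (v_{r_*} + X_{r_*})$ and $q \in F' \cap (v_{r^*} + X_{r^*})$, the growth gives $\|q - p\| \geq \|v_{r^*}\|/2$. Two subcases then arise. If some cluster $v_r + X_r$ with $r \leq r^*$ contains two points of $F'$, they are at distance at most $2M_r \leq 2M_{r^*}$, and $\|q - p\|/(2M_{r^*}) \geq \|v_{r^*}\|/(4M_{r^*}) > \rho$ by the growth condition. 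Otherwise the $k$ points of $F'$ lie in $k$ distinct clusters; since $k \geq 3$ at least two layers lie strictly below $r^*$, and the two points of $F'$ in them are at mutual distance at most $2(\|v_s\| + M_s)$ for some $s \leq r^* - 1$, so the one-step gap $\|v_{r^*}\| \geq 10\rho(\|v_s\| + M_s)$ again drives the max-to-min ratio of $F'$ above $\rho$. Both subcases contradict the congruence hypothesis, so $F'$ must lie inside a single cluster $v_r + X_r$.

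The main obstacle is that the scaling factor $\lambda$ can be arbitrarily large, so $F'$ itself need not have bounded diameter: one cannot hope to confine $F'$ to a bounded neighbourhood of any fixed point. The resolution is to argue not with absolute sizes but with the scale-invariant ratio $\rho$, and to arrange the $v_r$'s to grow lacunarily enough that every configuration meeting two clusters would violate the uniform bound $\rho$ on this ratio.
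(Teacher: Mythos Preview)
Your argument is correct and rests on the same idea as the paper's: the ratio $\rho=\diam(F)/\delta$ is scale-invariant, so lacunary growth of the $v_r$ forces any $F'$ meeting two clusters to exhibit a pairwise-distance ratio exceeding $\rho$. The paper organises the same reasoning a bit more economically by taking $r$ maximal with $F'\cap(v_r+X_r)\ne\varnothing$, which collapses your two subcases into a single pigeonhole on the dichotomy $X_{<r}$ versus $v_r+X_r$; but the content is the same.
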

	
	\begin{proof}
		Let $r$ be maximal such that $F'\cap (v_r+X_r)\ne\vn$. The main 
		point is that when choosing $v_r$ the set $X_{<r}=\bigdcup_{s<r}(v_s+X_s)$
		has already been determined. Moreover, the maximality of $r$ 
		yields $F'\subseteq X_{<r}\dcup (v_r+X_r)$. Because of $|F'|=k\ge 3$
		this means that at least one of the sets $X_{<r}$ and $v_r+X_r$ 
		needs to contain at least two points of $F'$. 
		
		As we can force $v_r+X_r$ to be as far apart from $X_{<r}$ as we want, 
		we can thus guarantee that $F'$ is a subset of either $X_{<r}$ or $v_r+X_r$.
		Together with $F'\cap (v_r+X_r)\ne\vn$ this implies $F'\subseteq v_r+X_r$.
	\end{proof}
	
	Since the set $X$ contains for every $r\ge 1$ a translated copy of $X_r$, 
	it has the first property promised by Theorem~\ref{thm:main2}.
	In order to establish the second property, we consider an arbitrary finite
	set $Y\subseteq X$ and set $Y_r=Y\cap (v_r+X_r)$ for every $r\ge 1$. Since  
	part~\ref{it:2ii} of Proposition~\ref{thm:finiteint} is translation-invariant, 
	there are $F$-free subsets $Z_r\subseteq Y_r$ of size $|Z_r|\ge \mu|Y_r|$.
	The subset $Z=\bigcup_{r\ge 1}Z_r$ of $Y$ clearly satisfies $|Z|\ge \mu |Y|$
	and Claim~\ref{clm:vrfast} implies that it is $F$-free as well.   
\end{proof}

\section{Preliminaries}\label{sec:prelim}
\subsection{The {\texorpdfstring{$\mu$}{mu}}-fractional property} 
The combinatorial lines in the set $\cX(k, r, \mu)$ we need to construct will 
certainly form a hypergraph $H$ with the special property that every subset 
of~$V(H)$ contains a large independent set (consisting of a $\mu$-proportion of 
its elements). Later it turns out to be helpful to work with a weighted version 
of this property.   

\begin{dfn}
	A $k$-uniform hypergraph $H$ has the {\it $\mu$-fractional property}
	for a real $\mu\in (0, 1]$ 
	if for every family $(w_i)_{i\in V(H)}$ of nonnegative real numbers 
	there exists an independent set $Z\subseteq V(H)$ such that 
	$\sum_{i\in Z}w_i\ge \mu \sum_{i\in V(H)}w_i$.
\end{dfn}

Let us observe that if a hypergraph $H$ has this property, then for every 
set $Y\subseteq V(H)$ we obtain an independent subset $Z\subseteq Y$ of size 
$|Z|\ge \mu |Y|$ by considering the characteristic function of $Y$. 
When we want to check whether a given hypergraph $H$ has the $\mu$-fractional property,
we can always assume that the given family $(w_i)_{i\in V(H)}$ satisfies 
$\sum_{i\in V(H)}w_i=1$. This is because the case that this sum vanishes 
is trivial, and otherwise we can divide all weights~$w_i$ by their sum without changing 
the situation. 

The advantage of allowing arbitrary weights $w_i\ge 0$ as opposed to just 
working with $w_i\in\{0, 1\}$ is that 
thereby the property is not only preserved under taking subhypergraphs, but also 
under taking blow-ups. We express this fact as follows. 

\begin{lemma}\label{cl:hereditary}
	Suppose $\mu\in (0, 1]$ and that $G$, $H$ are two $k$-uniform hypergraphs 
	for which there exists a homomorphism $\psi$ from $G$ to $H$. 
	If $H$ has the $\mu$-fractional property, then so does $G$. 
	In particular, the $\mu$-fractional property is hereditary, i.e., if some hypergraph
	has the property, then so do all of its subhypergraphs. 
\end{lemma}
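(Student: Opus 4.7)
The plan is to reduce the problem on $G$ to the problem on $H$ by pushing weights forward along $\psi$ and then pulling the resulting independent set back. Concretely, given weights $(w_i)_{i \in V(G)}$, for each $j \in V(H)$ I would set
\[
	w'_j \;=\; \sum_{i \in \psi^{-1}(j)} w_i\,,
\]
so that $\sum_{j \in V(H)} w'_j = \sum_{i \in V(G)} w_i$. Applying the $\mu$-fractional property of $H$ to the weights $(w'_j)_{j \in V(H)}$ produces an independent set $Z' \subseteq V(H)$ with $\sum_{j \in Z'} w'_j \ge \mu \sum_{j \in V(H)} w'_j$.

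Next I would take $Z = \psi^{-1}(Z') \subseteq V(G)$. By construction,
\[
	\sum_{i \in Z} w_i \;=\; \sum_{j \in Z'} w'_j \;\ge\; \mu \sum_{i \in V(G)} w_i\,,
\]
so the weight bound is immediate. The only thing left is to check that $Z$ is independent in $G$. If $e \in E(G)$ were contained in $Z$, then since $\psi$ is a homomorphism of $k$-uniform hypergraphs the image $\psi(e)$ is an edge of $H$, and each of its vertices lies in $Z'$; this would contradict the independence of $Z'$.

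The "in particular" clause follows by specialising to the case where $G$ is a subhypergraph of $H$ and $\psi$ is the inclusion map, which is clearly a homomorphism. I do not anticipate a serious obstacle here; the only subtlety is the implicit convention that a homomorphism of $k$-uniform hypergraphs sends each edge to a $k$-element edge (hence is injective on every edge), which is exactly what is needed for the independence verification above.
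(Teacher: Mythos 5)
Your proof is correct and takes essentially the same approach as the paper: push the weights forward along $\psi$, apply the $\mu$-fractional property of $H$, and pull the resulting independent set back via $\psi^{-1}$, observing that preimages of independent sets under homomorphisms are independent. The only cosmetic difference is that the paper normalises the weights to sum to $1$ beforehand (a reduction it has already justified), while you carry the total weight through explicitly.
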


\begin{proof}
	Let a family $(w_i)_{i\in V(G)}$ of nonnegative real numbers summing up to $1$
	be given and set $u_j=\sum_{i\in \psi^{-1}(j)}w_i$ for every $j\in V(H)$.
	Since $\sum_{j\in V(H)}u_j=1$ and $H$ has the $\mu$-fractional property,
	there is an independent set $Z_H\subseteq V(H)$ such 
	that $\sum_{j\in Z_H}u_j\ge \mu$. Now $Z_G=\psi^{-1}[Z_H]$ is 
	independent in $G$ (because $\psi$ is a homomorphism), and we 
	have $\sum_{i\in Z_G}w_i=\sum_{j\in Z_H}u_j\ge \mu$. 
	This shows that $G$ has indeed the $\mu$-fractional property. 
	
	It remains to remark that if $G$ is a subhypergraph of $H$, 
	then the inclusion map $V(G)\lra V(H)$ is a hypergraph homomorphism. 
\end{proof}

\subsection{Auxiliary hypergraphs}\label{sec:auxgraphs}

Let us recall that for positive integers $n, \ell\ge k\ge 2$ the {\it $k$-uniform
shift hypergraph $H=\mathrm{Sh}^{(k)}(n, \ell)$ on the $\ell$-subsets of $[n]$} 
is defined to have the vertex set $V(H)=[n]^{(\ell)}$ and the following 
$\binom n{k+\ell-1}$ edges: for every increasing sequence $a_1<\dots <a_{k+\ell-1}$
of integers from $[n]$ there is an edge $\{x_1,\ldots,x_k\}\in E(H)$ obtained 
by setting $x_i=\{a_i,\ldots,a_{i+\ell-1}\}$ for every $i\in [k]$. 

The key property of these shift hypergraphs we exploit in this work extends an idea 
from~\cite{EHS82}.
Roughly speaking, the result says
that if we take $\ell$ large enough, then $\mathrm{Sh}^{(k)}(n, \ell)$ has 
the $\mu$-fractional property for some $\mu$ as close to $\frac{k-1}k$ as we want. 
 
More precisely, given an integer $k\ge 2$ and a real $\mu\in (0, \frac{k-1}k)$
we first set  
\[
	\ell
	=
	\left\lceil\frac{2(k-1)^2}{(k-1)-k\mu}\right\rceil
\]
and then we consider $H^{(k)}(n, \mu)=\mathrm{Sh}^{(k)}(n, \ell)$ for every $n\ge k$.
A proof of the following result, which was suggested by Paul Erd\H{o}s, can be 
found in~\cite{NRS}*{Section 5}. For the reader's convenience we include a brief 
sketch of a simplified version of the argument below.

\begin{theorem}[Ne\v{s}et\v{r}il, R\"odl, and Sales]\label{thm:graphpisier}
	For all integers $k\ge 2$, $r\ge 1$ and every real $\mu\in(0, \frac{k-1}{k})$ 
	there exists an integer $n=n(k, r, \mu)$ such that the $k$-uniform hypergraph 
	$H=H^{(k)}(n, \mu)$ satisfies $\chi(H)>r$ and has the $\mu$-fractional property. 
\end{theorem}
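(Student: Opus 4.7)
The statement naturally splits into two independent parts. For $\chi(H)>r$, my plan is to invoke Ramsey's theorem for $\ell$-uniform hypergraphs: choosing $n$ at least the $r$-colour Ramsey number ensuring a monochromatic $(k+\ell-1)$-subset, every $r$-colouring of $[n]^{(\ell)}$ admits such a set $\{a_1<\dots<a_{k+\ell-1}\}$, and the $k$ sliding windows $\{a_i,\dots,a_{i+\ell-1}\}$ for $i\in[k]$ are then monochromatic and form a shift edge of $H$.

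For the $\mu$-fractional property, given weights $w$ on $[n]^{(\ell)}$ normalised so $\sum w=1$, I want to produce an independent set $Z$ with $w(Z)\ge\mu$. My plan is to exhibit a family $\{I_\pi\}_{\pi\in\Pi}$ of independent sets together with a probability distribution on $\Pi$ under which every vertex $x$ is contained in sets of total mass $\ge\mu$; averaging then yields some $\pi^\ast$ with $w(I_{\pi^\ast})\ge\mu$. The natural building blocks are the sets $I_{t,r}=\{x:\text{the $r$-th smallest element of $x$ equals $t$}\}$ for $t\in[n]$ and $r\in[\ell]$, and more generally $I_{A,r}=\{x:\text{the $r$-th smallest element of $x$ lies in $A$}\}$ for $A\subseteq[n]$ with $|A|\le k-1$. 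These are independent, since along a shift edge $x_1,\dots,x_k$ any common element occupies a position that strictly decreases from $x_i$ to $x_{i+1}$, so the $k$ positions cannot all equal $r$; and an edge contained in $I_{A,r}$ would force the $k$ distinct values $a_r<a_{r+1}<\dots<a_{r+k-1}$ all to lie in $A$, contradicting $|A|\le k-1$.

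The quantitative hypothesis $\ell\ge 2(k-1)^2/((k-1)-k\mu)$ is then chosen so that after averaging over $r\in[\ell]$ and over $(k-1)$-subsets $A\subseteq[n]$ in a suitable way, some $(r,A)$ yields an independent set of weight $\ge\mu$. The main obstacle I foresee is carrying out this averaging correctly: a single pair $(r,A)$ gives weight only of order $(k-1)/n$, which is far too small, so the argument has to aggregate information across many positions, and the freedom provided by large $\ell$ is exactly what allows us to trade $1/\ell$ for $(k-1)/k-\mu$. Making this trade-off precise is the delicate combinatorial point where I expect the bulk of the effort to lie.
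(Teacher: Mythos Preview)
Your treatment of $\chi(H)>r$ via Ramsey's theorem is correct and matches the paper. For the $\mu$-fractional property, however, you have identified the right framework (a family of independent sets together with a probability measure under which every vertex is covered with mass $\ge\mu$) but not the construction, and you say so yourself. The obstacle you flag is real: each set $I_{A,r}$ with $|A|\le k-1$ carries expected weight only $(k-1)/n$, and no averaging over pairs $(r,A)$ can repair this, because to reach weight $\mu$ you would have to pass to unions $\bigcup_{r\in R} I_{A_r,r}$, and nothing in your argument establishes that such unions remain independent. As written, the proof is incomplete with the central idea missing.

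The paper closes this gap with a single device you do not have: choose a uniformly random permutation $\pi$ of $[n]$, and for each vertex $x=\{a_1<\dots<a_\ell\}$ let $\nu(x,\pi)\in[\ell]$ be the position at which $\pi$ attains its maximum on $x$. With
\[
I=\bigl\{i\in[k,\ell-k+1]\colon i\not\equiv -1\pmod k\bigr\},
\]
the set $Y_\pi=\{x:\nu(x,\pi)\in I\}$ is independent: along a shift edge built from $a_1<\dots<a_{k+\ell-1}$, either the global $\pi$-maximum lies outside the common range $[k,\ell]$ (forcing some $\nu(x_i,\pi)\notin[k,\ell-k+1]$), or it lies in all $k$ vertices, in which case the positions $\nu(x_i,\pi)$ are $k$ consecutive integers and one of them is $\equiv -1\pmod k$. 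Since $\nu(x,\pi)$ is uniform on $[\ell]$ for every fixed $x$, each vertex lands in $Y_\pi$ with probability $|I|/\ell$, and the choice of $\ell$ is exactly what makes $|I|/\ell\ge\mu$; averaging over $\pi$ then yields the desired independent set. The random permutation simultaneously plays the role of your ``small set $A$'' at every position, which is why no separate aggregation step is needed.
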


\begin{proof}
	The claim on the chromatic number follows easily from Ramsey's 
	theorem~\cite{R29}. Next, our choice of $\ell$ guarantees that the set 
		\[
		I=\bigl\{i\in [k, \ell-k+1]\colon i\not\equiv -1\pmod{k}\bigr\}
	\]
		satisfies $|I|\ge \mu\ell$. Given a permutation $\pi\in\mathfrak{S}_n$ 
	and a vertex $x=\{a_1, \ldots, a_\ell\}\in V(H)$, where $a_1<\dots<a_\ell$,
	we denote the unique index $j\in [\ell]$ such that 
	$\pi(a_j)=\max\{\pi(a_i)\colon i\in [\ell]\}$ by~$\nu(x, \pi)$. 
	It is not difficult to check that for every permutation $\pi$ the set 
		\[
		Y_\pi=\{x\in V(H)\colon \nu(x, \pi)\in I\}
	\]
		is independent in $V(H)$. Moreover, if $(w_x)_{x\in V(H)}$ is a family of
	nonnegative real weights summing up to $1$ and $\pi$ gets chosen uniformly 
	at random, then the expectation of $\sum_{x\in Y_\pi}w_x$ is $|I|/\ell$.
	Hence, there exists some $\pi\in\mathfrak{S}_n$ such that 
	$\sum_{x\in Y_\pi}w_x\ge |I|/\ell\ge \mu$.
\end{proof}

In the special case $k=3$ we shall also need another property of shift hypergraphs.
Let~$K_4^{(3)-}$ denote the $3$-uniform hypergraph with four vertices and three edges. 

\begin{lemma}\label{lem:k43-}
	For all $n, \ell\ge 3$ the shift hypergraph $\mathrm{Sh}^{(3)}(n, \ell)$
	is $K_4^{(3)-}$-free. 
\end{lemma}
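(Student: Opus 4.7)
My plan is to argue by contradiction. The starting point is a concrete description of an edge of $\mathrm{Sh}^{(3)}(n,\ell)$: if $\{x_1, x_2, x_3\}$ is produced by the sequence $a_1 < \dots < a_{\ell+2}$ and we label the three $\ell$-sets so that $\min(x_1) < \min(x_2) < \min(x_3)$, then $x_i = \{a_i, \dots, a_{i+\ell-1}\}$ for each $i \in \{1,2,3\}$. Two consequences of this description will drive the whole argument. First, the three minima are pairwise distinct. Second, the two ``adjacent'' pairs satisfy $|x_1 \setminus x_2| = |x_2 \setminus x_3| = 1$, whereas the ``extreme'' pair satisfies $|x_1 \setminus x_3| = 2$.

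Now suppose four vertices $y_1, \dots, y_4 \in [n]^{(\ell)}$ span a copy of $K_4^{(3)-}$. Since the three edges of $K_4^{(3)-}$ share the unique vertex missing from the excluded edge, we may take the edges to be $\{y_1,y_2,y_3\}$, $\{y_1,y_2,y_4\}$ and $\{y_1,y_3,y_4\}$. As every pair of our four vertices lies in some common edge, the minima $\min(y_1), \dots, \min(y_4)$ are pairwise distinct. After relabelling $y_2, y_3, y_4$ we may further assume $\min(y_2) < \min(y_3) < \min(y_4)$, and the proof then splits into four cases according to the position $c \in \{1,2,3,4\}$ of $\min(y_1)$ among these values.

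In each case the left--middle--right ordering of the vertices inside each edge is forced, and a direct inspection produces a pair $\{y_i, y_j\}$ that appears in two of the three edges in incompatible roles: adjacent in one edge (so that $|y_i \setminus y_j| = 1$, assuming $\min(y_i) < \min(y_j)$) and extreme in the other (so that $|y_i \setminus y_j| = 2$). For example, when $c = 1$ the pair $\{y_1, y_3\}$ is extreme in $\{y_1,y_2,y_3\}$ but adjacent in $\{y_1,y_3,y_4\}$; the remaining cases $c = 2, 3, 4$ yield analogous contradictions via the pairs $\{y_1, y_4\}$, $\{y_1, y_2\}$ and $\{y_1, y_3\}$ respectively.

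The argument is entirely combinatorial, and I do not foresee any real obstacle. The only point that requires a little care is keeping the left--middle--right assignments straight as $\min(y_1)$ moves through its four possible positions, but the four cases are essentially symmetric and the desired contradiction appears almost immediately from the description of an edge stated above.
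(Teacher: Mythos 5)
Your proposal is correct, and it takes a genuinely different route from the paper's proof. The paper reduces the claim to a known fact about Erd\H{o}s--Hajnal tournament hypergraphs: it orients the pair $\{x,y\}$ (with $\min(x)<\min(y)$) according to the parity of $|y\setminus x|$, observes that every edge of $\mathrm{Sh}^{(3)}(n,\ell)$ then becomes a cyclically oriented triangle, and invokes the standard fact that tournament hypergraphs are $K_4^{(3)-}$-free. Your argument unfolds this into a direct, self-contained case analysis using the same underlying invariant: for an edge arising from $a_1<\dots<a_{\ell+2}$, the set difference $|x_i\setminus x_j|$ equals $1$ for adjacent pairs and $2$ for the extreme pair, and since $|y_i\setminus y_j|$ is a fixed quantity, no pair can be adjacent in one edge and extreme in another. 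I checked all four cases $c=1,2,3,4$ and the indicated pairs $\{y_1,y_3\}$, $\{y_1,y_4\}$, $\{y_1,y_2\}$, $\{y_1,y_3\}$ do yield contradictions. What the paper's approach buys is conceptual economy and the generalisation noted in their follow-up remark (higher-order tournaments show $\mathrm{Sh}^{(k)}(n,\ell)$ omits the $k$-uniform analogue); what your approach buys is that it is elementary and needs no appeal to tournament hypergraphs. Two small points of hygiene: you should note explicitly that relabelling $y_2,y_3,y_4$ is harmless because the three edges are exactly the triples containing $y_1$, so any permutation of $\{y_2,y_3,y_4\}$ preserves the edge set; and it is worth stating that $|x\setminus y|=|y\setminus x|$ (both vertices being $\ell$-sets), so the invariant is symmetric in the pair.
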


\begin{proof}
	Given a tournament $T$, Erd\H{o}s and Hajnal introduced the $3$-uniform 
	{\it tournament hypergraph} $H(T)$ which has the same vertices as $T$ 
	and whose edges correspond to the cyclically oriented triangles in $T$. 
	It is well known that these tournament hypergraphs are $K_4^{(3)-}$-free 
	(see, e.g.,~\cite{ErHa72}). 
	
	Thus it suffices to orient the pairs of vertices 
	of $H=\mathrm{Sh}^{(3)}(n, \ell)$ in such a way that all edges of $H$
	induce cyclically oriented triangles. Consider any such 
	pair $\{x, y\}\in V(H)^{(2)}$. 
	If $\min(x)=\min(y)$, the orientation 
	of $xy$ is immaterial (because no edge of~$H$ contains both~$x$ and~$y$). 
	If $\min(x)<\min(y)$, we choose the orientation $x\to y$ or $y\to x$
	depending on whether $|y\sm x|$ is even or odd. Now for every edge 
	$\{x, y, z\}\in E(H)$ with $\min(x)<\min(y)<\min(z)$ we have the oriented
	triangle $z\to y\to x\to z$. 
\end{proof}

\begin{remark}
	More generally it could be shown that for $n, \ell\ge k\ge 2$ 
	the $k$-uniform shift hypergraph $\mathrm{Sh}^{(k)}(n, \ell)$
	is $F^{(k)}$-free, where $F^{(k)}$ denotes the $k$-uniform 
	hypergraph on $k+1$ vertices with three edges.   
	One way to see this involves higher order tournaments 
	(described, e.g., in~\cite{3edges}*{\S1.3}), which are known 
	to be $F^{(k)}$-free (see~\cite{3edges}*{Fact~1.5}). 
\end{remark}

\subsection{Triangles and tripods in Hales--Jewett cubes}\label{sec:halesjewett}
Given an arbitrary finite set $A$, one can form Hales-Jewett cubes $A^n$ 
and define combinatorial lines as in~\S\ref{subsec:cl}. In this context 
one often calls $A$ the `alphabet' and the points in $A^n$ are then viewed as 
`words of length~$n$'. We shall write $\ccL(A^n)$ for the collection of all 
combinatorial lines in $A^n$. For simplicity we identify 
any subset $\ccL\subseteq \ccL(A^n)$ with the $|A|$-uniform hypergraph on~$A^n$
whose set of edges is $\ccL$. We may thus write $\chi(\ccL)$ for the chromatic 
number of this hypergraph. With this notation the Hales-Jewett theorem states 
that for every fixed alphabet $A$ we have $\lim_{n\to\infty}\chi(\ccL(A^n))=\infty$.

In our construction we need the existence of certain `sparse' subhypergraphs 
$\ccL\subseteq \ccL(A^n)$ of large chromatic number. Let us note first that 
$\ccL(A^n)$ itself is linear, i.e., any two of its edges intersect in at most 
one vertex. This follows from the obvious fact that through any two distinct 
points of a Hales-Jewett cube there can pass at most one combinatorial line. 
Three distinct lines in $\ccL(A^n)$ are said to form a {\it triangle} if 
they do not pass through a common point, but any two of them intersect. 

As proved by the second author~\cite{R90}, given~$A$ and~$r$ there is 
for some dimension $n$ a triangle-free line system $\ccL\subseteq\ccL(A^n)$
such that $\chi(\ccL)>r$. In fact, he even showed that hypergraphs of large 
chromatic number and large girth, first obtained by Erd\H{o}s, Hajnal, 
and Lov\'{a}sz using different methods~\cites{E59,EH66, L68}, can be found 
inside the Hales-Jewett hypergraphs~$\ccL(A^n)$. For the present purposes 
excluding triangles is important, but longer cycles are irrelevant. 

There is, however, one further configuration of lines that we need to forbid. 
In the definition that follows, for every combinatorial line $L\subseteq A^n$
its set of moving coordinates is denoted by $M_L$.

\begin{dfn}
	Three distinct combinatorial lines $L, L', L''\subseteq A^n$ passing through 
	a common point are said to form a {\it tripod} if $M_L$ is the disjoint union 
	of $M_{L'}$ and $M_{L''}$.
\end{dfn}

For instance, for every $a\in A$ the diagonal $\{xx\colon x\in A\}$ forms 
together with the two lines $\{ax\colon x\in A\}$ and $\{xa\colon x\in A\}$
a tripod in $A^2$. It turns out that the argument in~\cite{R90} allows to 
exclude tripods and short cycles at the same time. We will only state and 
prove the case of triangles here. 

\begin{thm}\label{thm:tripod}
	Given an alphabet $A$ with at least two letters and $r\ge 1$, there is for 
	every sufficiently large dimension $n$ a collection $\ccL\subseteq \ccL(A^n)$ 
	of combinatorial lines containing neither tripods nor triangles such 
	that $\chi(\ccL)>r$.
\end{thm}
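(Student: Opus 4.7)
The plan is to follow the probabilistic argument of R\"odl~\cite{R90}, which produces triangle-free subhypergraphs of $\ccL(A^N)$ with arbitrarily large chromatic number, and to verify that tripods can be simultaneously excluded by a parallel counting bound. I would first fix the dimension $N$ large enough that a quantitative form of the Hales-Jewett theorem applies: every $r$-colouring of $A^N$ admits at least $D=D(N, r, |A|)$ monochromatic combinatorial lines, with $D$ very large. I would then form a random subsystem $\ccL_0\subseteq \ccL(A^N)$ by selecting each combinatorial line independently with some probability $p$, to be tuned later.

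The key numerical observation for tripods is that their total number in $\ccL(A^N)$ is at most $|A|^N\cdot 3^N$: indeed, a tripod is determined by its common point $x\in A^N$ together with an ordered partition $[N]=C\dcup M'\dcup M''$ with $M', M''\ne\vn$ into \emph{constant}, \emph{$L'$-moving}, and \emph{$L''$-moving} coordinates, after which the three moving sets $M_L$, $M_{L'}$, $M_{L''}$ must be $M'\cup M''$, $M'$, and $M''$ respectively, and each line is then forced by the requirement of passing through~$x$. This count is exponentially smaller than the square of the number of lines, since for $|A|\ge 2$ the ratio $|A|^N 3^N / (|A|+1)^{2N}$ decays as $\bigl(3|A|/(|A|+1)^2\bigr)^N$. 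Triangles admit an analogous exponential bound, as carried out in~\cite{R90}.

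Hence the expected numbers of triangles and of tripods in $\ccL_0$ are at most $p^3$ times these bounds, while the expected number of monochromatic lines under any fixed $r$-colouring of $A^N$ is at least $pD$. For an appropriate choice of $p$, a Lov\'asz Local Lemma argument (or a first-moment analysis exploiting supersaturation) shows that with positive probability $\ccL_0$ realises a configuration in which, after deleting one line from each triangle and each tripod, the remaining system $\ccL$ is both triangle- and tripod-free and still has chromatic number exceeding $r$.

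The main obstacle—already present in~\cite{R90} for triangles alone—is maintaining chromaticity uniformly across all $r$-colourings after deletion. The supersaturation constant $D$ must be arranged, by taking $N$ sufficiently large, to dominate the deletion loss, either through the Lov\'asz Local Lemma or through a union bound against the $r^{|A|^N}$ possible colourings weighted against the survival probability of a monochromatic line. Since the tripod count is of the same exponential order as the triangle count, the analysis of~\cite{R90} extends without modification of its structure once the relevant constants are adjusted.
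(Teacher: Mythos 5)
Your counting bound on tripods is correct: a tripod in $\ccL(A^n)$ is determined by its apex $x\in A^n$ and an ordered partition $[n]=C\dcup M'\dcup M''$, giving at most $|A|^n\cdot 3^n$ of them. But the random-sparsification-plus-deletion scheme you build around it does not close numerically, and the paper takes a genuinely different route.

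The trouble is that you work with \emph{all} combinatorial lines, of which there are $\approx (|A|+1)^n$, while the supersaturation of monochromatic lines in a worst-case colouring is on the order of $\mathrm{poly}(n)\cdot|A|^n$ and the number of $r$-colourings is $r^{|A|^n}$. To survive a union bound over colourings after selecting lines independently with probability $p$, you need $pD\gtrsim |A|^n\log r$, hence $p$ at least inverse polynomial in $n$. But the number of triangles is of order $(4|A|)^n$ (each determined by three pairwise-collinear points, and each point lies on $\lesssim 2^n$ lines), so the expected number of triangles is $p^3(4|A|)^n$ while the expected number of selected lines is only $p(|A|+1)^n$; for these to balance you would need $p\lesssim\bigl((|A|+1)/4|A|\bigr)^{n/2}$, which is exponentially small. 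These two requirements on $p$ are incompatible for every $|A|\geq 2$, so a first-moment deletion argument, with or without the Local Lemma, cannot be pushed through in the way you describe. (Also, the Lov\'asz Local Lemma is a tool for proving $\chi\le r$ by exhibiting a proper colouring, not for proving $\chi>r$; it is not applicable to the direction you need.)

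The paper avoids this by two ideas you have not incorporated. First, it restricts attention to lines with at most $m$ moving coordinates, where $m$ is a fixed parameter tiny compared to $n$; the resulting family $\ccL_1\dcup\cdots\dcup\ccL_m$ has size $\approx n^m|A|^n$, comparable to the vertex count up to a polynomial in $n$, and a quantitative Hales--Jewett amplification (Claim~\ref{clm:3a}) shows that in every $r$-colouring a constant fraction $\alpha$ of some $\ccL_i$ is monochromatic. Second, instead of a one-shot random choice, the paper runs a deterministic greedy process: it adds lines one at a time to a ``suitable'' system (bounded degree $d$ at every point, no triangles, no tripods), and the local count in Claim~\ref{clm:3b} shows that at each step all but an $\alpha/2$-fraction of $\ccL_i$ can be added without breaking suitability---here the bounded-degree condition is what keeps the incremental tripod and triangle counts at $O(d^2|A|^n)\ll|\ccL_i|$. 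Each added line is chosen to kill a constant fraction of the remaining bad colourings, so after $O\bigl(|A|^n\log r\bigr)$ steps no bad colouring survives. This restriction to bounded moving sets together with the incremental (rather than global) tripod count is the essential missing ingredient in your proposal.
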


\begin{proof}
	For transparency we can assume $A=[k]$, where $k\ge 2$.
	Depending on $k$ and $r$ we fix a real $\alpha>0$ and natural numbers 
	$d$, $m$, $n$ fitting into the hierarchy 
		\[
		n\gg d\gg \alpha^{-1} \gg m \gg k, r\,.
	\]
		
	For every $i\in [m]$ let $\ccL_i$ be the collection of all combinatorial lines 
	$L\subseteq [k]^n$ with $|M_L|=i$. Since there are $\binom ni$ possibilities 
	for the set $M_L$ and $k^{n-i}$ possibilities for the behaviour of $L$ on the 
	constant coordinates, we have $|\ccL_i|=\binom ni k^{n-i}$.
	
	\begin{claim}\label{clm:3a}
		For every colouring $\gamma\colon [k]^n\lra [r]$ there is some $i\in [m]$
		such that at least~$\alpha |\ccL_i|$ of the lines in $\ccL_i$ are monochromatic 
		with respect to $\gamma$.   
	\end{claim} 
	
	\begin{proof}
		We can regard $[k]^n$ as the set of all function from $[n]$ to $[k]$. 
		Let $\Omega$ be the set of all~$\binom nm k^{n-m}$ functions from an
		$(n-m)$-element subset of~$[n]$ to~$[k]$. For each of these functions $f$
		the set $S_f=\{g\in [k]^n\colon g\supseteq f\}$ of all points
		extending it is an isomorphic copy 
		of the Hales-Jewett cube $[k]^m$. So by the Hales-Jewett theorem there 
		is some monochromatic combinatorial line $L_f\subseteq S_f$. This line
		belongs to one of the sets $\ccL_1, \dots, \ccL_m$ and the box principle 
		(Schubfachprinzip) yields some set $\Omega'\subseteq\Omega$ of 
		size $|\Omega'|\ge \frac 1m |\Omega|$
		together with an integer $i\in [m]$ such that $L_f\in \ccL_i$ holds for every 
		$f\in \Omega'$. Conversely, every line $L\in \ccL_i$ appears in~$\binom{n-i}{m-i}$
		of the spaces $S_f$ with $f\in \Omega$. For these reasons, the number of 
		monochromatic lines in $\ccL_i$ is at least 
				\[
			\frac{|\Omega'|}{\binom{n-i}{m-i}}
			\ge
			\frac 1m \left(\frac nm\right)^i k^{n-m}
			\ge
			\frac{k^{i-m}}{m^{i+1}}|\ccL_i|
			\ge
			\alpha |\ccL_i|\,. \qedhere
		\]
	\end{proof}
	
	Let us call a collection of lines $\ccL\subseteq \bigdcup_{i\in [m]}\ccL_i$
	{\it suitable} if 
		\begin{enumerate}[label=\nlabel]
		\item\label{it:s1} through every point $x\in [k]^n$ there pass at most $d$ 
			lines from $\ccL$;
		\item\label{it:s2} no three lines in $\ccL$ form a tripod or a triangle.
			\end{enumerate}
		
	For instance, $\vn$ is a suitable collection of lines. The idea for constructing 
	the desired system of lines is that starting with $\vn$ we keep adding lines one 
	by one while maintaining at every step that the set of lines we have already chosen 
	remains suitable. It can be shown that as long as we have selected at most 
		\[
		q=\frac{2m\log(r)}{\alpha}k^n
	\]
		lines, we can still choose `almost every' line in the next step. This will in turn 
	imply that in every step we can reduce the number of `bad' colourings, 
	which have no monochromatic line in our system yet, by a constant proportion.
	At most $q$ such steps will push the number of bad colourings below one. 
	
	\begin{claim}\label{clm:3b}
		If $\ccL$ is a suitable system of at most $q$ lines, then for every $i\in[m]$
		all but at most $\alpha |\ccL_i|/2$ lines $L\in \ccL_i$ have the property that 
		$\ccL\cup \{L\}$ is again suitable. 
	\end{claim}
	
	\begin{proof}
		Fix $i\in [m]$.
		We shall first bound the number $s_1$ of lines in $\ccL_i$ whose addition 
		to $\ccL$ would cause a violation of~\ref{it:s1}. Let $A\subseteq [k]^n$ be 
		the set of all points lying on exactly $d$ lines from $\ccL$. Since every 
		line contains $k$ points, double counting yields $|A|d\le k|\ccL|\le kq$.
		Together with the fact that through every point of $[k]^n$ there pass 
		at most $\binom ni$ lines from~$\ccL_i$ this shows
				\begin{equation}\label{eq:s1}
			s_1
			\le 
			\binom ni |A|
			\le 
			\frac{kq}{dk^{n-i}}|\ccL_i|
			=
			\frac{2k^{i+1}m\log(r)}{d\alpha}|\ccL_i|
			\le 
		   \frac{\alpha}{4}|\ccL_i|\,.
		 \end{equation}
		 		 
		 Next, the number $s_2$ of lines whose addition to $\ccL$ would create a tripod 
		 can be bounded by 
		 		 \begin{equation}\label{eq:s2}
		 	s_2\le d^2k^n\,.
		 \end{equation}
		 		 This is because there are $k^n$ possibilities for 
		 a point $x\in [k]^n$, where the three lines of such a tripod could meet,
		 and due to~\ref{it:s1} there are at most $d^2$ pairs of 
		 lines $\{L', L''\}\in \ccL^{(2)}$ passing through $x$. 
		 Moreover, given $L'$ and $L''$ there is at most one line $L$ 
		 completing a tripod. 
		 
		 Utilising that through any two points there is at most one line one 
		 shows similarly that the number $s_3$ of lines $L$ for 
		 which $\ccL\cup\{L\}$ contains a triangle can be bounded by 
		 		 \[
		 	s_3\le (kd)^2 k^n\,.
		 \]
		 		 Together with~\eqref{eq:s2} this shows 
		 		 \[
		 	s_2+s_3
			\le
			2d^2k^{n+2}
			=
			\frac{2k^{i+2}d^2}{\binom ni}|\ccL_i|
			\le
			\frac{2k^3d^2}{n}|\ccL_i|
			\le
			\frac\alpha 4|\ccL_i|\,.
		\]
				In view of~\eqref{eq:s1} the desired estimate $s_1+s_2+s_3\le \alpha|\ccL_i|/2$
		follows.     
	\end{proof}
	
	Now for every system of lines $\ccL\subseteq\ccL([k]^n)$ we denote the set 
	of all `bad' colourings $\gamma\colon [k]^n\lra [r]$ such that no line in $\ccL$
	is monochromatic with respect to $\gamma$ by $B(\ccL)$. 
	Take a maximal suitable line system $\ccL$ with the 
	property  
		\[
		 |B(\ccL)|\le (1-\alpha/2m)^{|\ccL|}r^{k^n}\,.
	\]
	The existence of such a system is guaranteed by the fact 
	that $|B(\vn)|\le r^{k^n}$. If $|\ccL|>q$, then 
		\[
		|B(\ccL)|<\exp(-q\alpha/2m+k^n\log(r))=1
	\]
		proves that all colourings are good for $\ccL$, which in turn means that $\ccL$
	has all properties promised by the theorem. So we can assume $|\ccL|\le q$
	in the sequel. By Claim~\ref{clm:3a} and the box principle there are a set 
	$B\subseteq B(\ccL)$ of size $|B|\ge \frac1m |B(\ccL)|$ and an integer $i\in [m]$
	such that for every colouring in $B$ at least $\alpha |\ccL_i|$ lines in $\ccL_i$
	are monochromatic. Now Claim~\ref{clm:3b} reveals that for every colouring 
	in $B$ there are at least $\alpha |\ccL_i|/2$ monochromatic lines $L\in \ccL_i$ 
	for which $\ccL\cup \{L\}$ is suitable. Consequently, there is a fixed 
	line $L\in \ccL_i$ which is monochromatic for at least $\alpha |B|/2$ colourings 
	in~$B$ such that $\ccL^\star=\ccL\cup \{L\}$ is suitable. 
	But now 
		\[
		|B(\ccL^\star)|
		\le 
		|B(\ccL)|-\alpha |B|/2 
		\le 
		(1-\alpha/2m)|B(\ccL)|
		\le 
		(1-\alpha/2m)^{|\ccL^\star|}r^{k^n} 
	\]
		shows that $\ccL^\star$ contradicts the maximality of $\ccL$.
\end{proof}

\begin{remark}
	If we just wanted to produce a line system of large chromatic number 
	without triangles (or short cycles), we could also use the partite construction 
	method. However, one of us is bamboozled by the fact that he cannot exclude 
	tripods in this way.  
\end{remark}

\subsection{More on embeddings.}
Preparing a concise description of the partite construction we shall perform 
in the next section, we would like to offer some (mostly standard) 
remarks on combinatorial embeddings. For a fixed alphabet $A$ and natural
numbers $n\ge m$ a map $\eta\colon A^m\lra A^n$ is called a {\it combinatorial 
embedding} if there are a partition $[n]=C\dcup M_1\dcup \dots \dcup M_m$
and a function $g\colon C\lra A$ such that $M_1, \dots, M_m\ne\vn$ and 
for every $a=(a_1, \dots, a_m)\in A^m$ and every $i\in [n]$ the $i^{\mathrm{th}}$ 
coordinate of $\eta(a)$ is 
\[
	\begin{cases}
		g(i) & \text{ if $i\in C$,} \cr
		a_j  & \text{ if $i\in M_j$.} \cr 
	\end{cases}
\]
In the special case $m=1$ this reduces to the notion of combinatorial 
embeddings ${A\lra A^n}$ introduced in~\S\ref{subsec:cl}. It is well known 
and easy to verify that every composition of combinatorial embeddings 
$A^\ell\lra A^m\lra A^n$ is again a combinatorial embedding. This implies,
for instance, that combinatorial embeddings map combinatorial lines to 
combinatorial lines. Similarly, quasilines are mapped to quasilines.

For $|A|\ge 2$ the partition $[n]=C\dcup M_1\dcup \dots \dcup M_m$
and the function $g\colon C\lra A$ are uniquely determined by the 
corresponding embedding $\eta$. Thus for every superset $B\supseteq A$
there is a unique extension of $\eta$ to a combinatorial embedding 
$\eta^+\colon B^m\lra B^n$.

We will only encounter such extensions in the following context.
For some set $\Pi_x\subseteq [k]^m$ we have a combinatorial embedding 
$\eta\colon \Pi_x\lra \Pi_x^n$. Identifying $([k]^m)^n$ in the obvious 
manner with $[k]^{mn}$ we then get an extension $\eta^+\colon [k]^m\lra [k]^{mn}$.
By construction, $\eta^+$ is a combinatorial embedding from the one-dimensional 
space over $[k]^m$ to the $n$-dimensional space over $[k]^m$. It is readily verified  
that we can also view $\eta^+$ over the smaller alphabet~$[k]$ as a combinatorial 
embedding from $m$-dimensional space into $(mn)$-dimensional space. 
Consequently, and this is something we shall exploit later, compositions of such 
extensions are combinatorial embeddings over $[k]$ as well.

\section{The partite construction}\label{sec:construction}

The proof of Theorem~\ref{thm:main3} is based on the partite 
construction method (see \cites{NR79, FGR87}). This means that 
we will recursively construct a sequence of ``pictures'' 
$\Pi_0, \ldots, \Pi_q$, the last one of which corresponds to the 
desired set $\cX(k, r, \mu)$. The entire construction will take 
place ``over'' a hypergraph $G$ obtained in 
Theorem~\ref{thm:graphpisier}. The pictures $\Pi_i$ themselves 
will consist of subsets $P_i$ of some Hales-Jewett cubes 
$[k]^{m_i}$ and maps $\psi_i\colon P_i\lra V(G)$ telling 
us in which way the points $x\in P_i$ are associated to 
vertices $\psi_i(x)$ of $G$.

Throughout the construction, we need to pay attention to the 
combinatorial lines in these sets $P_i$. In picture zero~$\Pi_0$ 
they are mutually disjoint and there will be one line for every 
edge of $G$. While constructing $\Pi_0, \dots, \Pi_q$ one of our 
aims is to transfer the property $\chi(G)>r$ of $G$ gradually onto 
the pictures in our sequence. Moreover, clause~\ref{it:17ii} of 
Theorem~\ref{thm:main3} forces us to protect ourselves as much 
as possible against unwanted quasilines in our pictures. 
In general, partite constructions (when executed carefully) 
tend to produce Ramsey objects that are locally quite sparse and we will benefit 
from this phenomenon as well. 

\subsection{Pictures}
In the context of the present work, pictures are defined as follows. 

\begin{dfn}\label{dfn:pic}
	Let $G$ be a $k$-uniform hypergraph, where $k\ge 3$. 
	A {\it picture} over $G$ is a pair $\Pi=(P, \psi)$
	consisting of a subset $P\subseteq [k]^m$ of a 
	Hales-Jewett cube and a map $\psi\colon P\lra V(G)$ 
	such that every quasiline $L\subseteq P$ is a 
	combinatorial line satisfying $\psi[L]\in E(G)$.
\end{dfn}

If $\Pi=(P, \psi)$ is a picture over $G$ and $x$ is a vertex of $G$, 
the set $\Pi_x=\psi^{-1}(x)$ is called the {\it music line} over $x$. 
Clearly, $P$ is the disjoint union of all music lines. In our figures 
we will always draw the hypergraph $G$ vertically to the left side 
of $P$, and $P$ itself will be drawn in such a way that every vertex 
$x\in V(G)$ is together with its music line $\Pi_x$ on a common horizontal 
line. Thus $\psi$ can be thought of as a projection to the left side 
(see, e.g., Figure~\ref{fig:fig1}). 
We prepare the construction of picture zero by showing that there are 
arbitrarily many lines ``in general position''.

\begin{lemma}\label{lem:41}
	For all integers $k\ge 3$ and $m\ge 1$ there are mutually disjoint combinatorial 
	lines $L_1, \dots, L_m\subseteq [k]^{2m}$ such that the only quasilines 
	$L\subseteq \bigdcup_{i\in [m]}L_i$ are $L_1, \dots, L_m$ themselves. 
\end{lemma}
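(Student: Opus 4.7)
The plan is to exhibit the lines $L_1,\dots,L_m$ explicitly with a "parity" pattern and then verify the two required properties by a coordinate-by-coordinate analysis. For each $i\in[m]$, let $L_i$ be the combinatorial line with moving coordinates $M_i=\{2i-1,2i\}$ whose constant coordinates are determined by parity: the value $1$ at every odd coordinate outside $M_i$, and the value $2$ at every even coordinate outside $M_i$. Thus the $j$-th point of $L_i$, call it $p_i^{(j)}$, carries $j$ at positions $2i-1$ and $2i$, $1$ at the other odd positions, and $2$ at the other even positions. Mutual disjointness is immediate: a common point $p\in L_i\cap L_{i'}$ with $i\ne i'$ would have moving value $1$ read off from coordinate $2i-1$ (since $L_{i'}$ prescribes $1$ there) and moving value $2$ read off from coordinate $2i$ (since $L_{i'}$ prescribes $2$ there), an impossibility.

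For the quasiline statement I would argue by contradiction. Suppose $L\subseteq\bigdcup_iL_i$ is a quasiline with $L\ne L_i$ for every $i$. Setting $S_i=L\cap L_i$ and $I=\{i:S_i\ne\vn\}$, we have $|I|\ge 2$ and $\sum_{i\in I}|S_i|=k$. Fix $i\in I$ and examine coordinate $2i-1$ of $L$: points in $S_i$ contribute their (pairwise distinct) moving values, while points in the other $S_{i'}$ contribute the constant $1$. The quasiline condition forces this column to be either constant or a permutation of $[k]$.

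The main technical step is to rule out the "permutation" case, since the "constant" case readily forces $|S_i|\le 1$ with $S_i\subseteq\{p_i^{(1)}\}$. If coordinate $2i-1$ is a permutation, then at most one point of $L$ lies outside $L_i$, so $|S_i|=k-1$ and the moving values appearing in $S_i$ must be exactly $[k]\setminus\{1\}$. Now inspect coordinate $2i$: the same $S_i$-points contribute the same $k-1$ moving values, while the unique point outside $S_i$ contributes the constant $2$ (since $2i$ is even). Being a constant column is impossible because $|S_i|\ge 2$, and being a permutation forces the moving values in $S_i$ to equal $[k]\setminus\{2\}$. The contradiction $[k]\setminus\{1\}=[k]\setminus\{2\}$, valid since $k\ge 3$, rules this out.

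Consequently every $i\in I$ falls into the "constant" case, so $|S_i|=1$ and $S_i=\{p_i^{(1)}\}$; since the $|S_i|$ sum to $k$, we must have $|I|=k$ and $L=\{p_i^{(1)}:i\in I\}$. To finish, pick any $i\in I$ and look once more at coordinate $2i$: the point $p_i^{(1)}$ contributes $1$, whereas every other $p_{i'}^{(1)}$ contributes the constant even-coordinate value $2$. This column reads $1,2,\dots,2$, which for $k\ge 3$ is neither constant nor all distinct, so $L$ cannot be a quasiline after all. The only real obstacle is the double-coordinate comparison in the "permutation" step, which is precisely where the lemma uses that each $L_i$ gets \emph{two} moving coordinates with matching behaviour forced against two \emph{different} constant values on the other lines.
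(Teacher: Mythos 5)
Your proposal is correct, but the construction and the verification are genuinely different from the paper's. The paper also works in $[k]^{2m}$, but gives each $L_i$ a \emph{single} moving coordinate $i\in[m]$ and plants a ``tag'' of value $2$ at coordinate $m+i$, with value $1$ at every other constant position. The verification then hinges on one short observation: for any quasiline $L$ inside the disjoint union, each of the last $m$ coordinates can only display the values $1$ and $2$, and since $k\ge 3$ a $k$-term column over a two-letter alphabet cannot be injective, hence must be constant. This pins down the entire $\{1,2\}$-tag of $L$, which in turn identifies the unique $L_j$ containing it. Your construction instead uses \emph{two} moving coordinates $\{2i-1,2i\}$ per line (always carrying equal values) and a parity-based constant pattern ($1$ on odd, $2$ on even positions outside the moving block), and the verification becomes a case analysis on each column: the key points being that the two moving coordinates of $L_i$ agree while the two parities of constant coordinates of any other $L_{i'}$ disagree, so the ``injective column'' option is killed by comparing coordinates $2i-1$ and $2i$ against each other, and the ``constant column'' option together with $|I|=k$ is killed by a final parity clash. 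Both arguments are sound and use $k\ge 3$ in essentially the same spot (ruling out an injective two-valued column of length $k$), but the paper's tag-based construction reaches the conclusion in one stroke while yours needs the additional subcase where a single $S_i$ absorbs $k-1$ points.
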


\begin{proof}
	For every $i\in [m]$ we define $L_i$ to be a line whose only moving coordinate 
	is $i$. We further require that all points of $L_i$ have the entry $2$ in the 
	$(m+i)^\mathrm{th}$ coordinate and the entry $1$ in all other constant coordinates. 
	So, e.g., if $m=3$, we take the three lines $L_1=\{x11211\colon x\in [k]\}$, 
	$L_2=\{1x1121\colon x\in [k]\}$, and $L_3=\{11x112\colon x\in [k]\}$. 
	
	It is plain that these $m$ lines are mutually disjoint. 
	Now let $L\subseteq L_1\dcup\dots\dcup L_m$ be a quasiline. Clearly 
	there is some $j\in [m]$ such that $L\cap L_j\ne \vn$ and it suffices 
	to show that~$L=L_j$. To this end we observe that for every $i\in [m]$
	the points of $L$ can only have the entries $1$ or $2$ in their $(m+i)^\mathrm{th}$ 
	coordinates. Therefore, all points of $L$ need to agree in these coordinates 
	(cf.~Definition~\ref{dfn:quasi}) and together with $L\cap L_j\ne \vn$ it follows 
	that the $m$ last coordinates of the points in $L$ and $L_j$ are the same. 
	Combined with $L\subseteq \bigdcup_{i\in [m]}L_i$ this leads to $L=L_j$. 
\end{proof}

\begin{lemma}[Picture zero]\label{lem:Pic0}
	If $k\ge 3$ and $G$ denotes a $k$-uniform hypergraph, then there is a
	picture $\Pi_0=(P_0, \psi_0)$ over $G$ such that there is a 
	family $(L_e)_{e\in E(G)}$ of mutually disjoint combinatorial 
	lines satisfying $P_0=\bigdcup_{e\in E(G)}L_e$ and $\psi_0[L_e]=e$
	for every $e\in E(G)$.
\end{lemma}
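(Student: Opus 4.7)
The plan is to apply Lemma~\ref{lem:41} directly, taking the parameter there to be $m = |E(G)|$ (assuming $E(G)$ is finite; the infinite case can be handled by a union of finite subhypergraphs, but likely $G$ is finite for our application). This yields $|E(G)|$ mutually disjoint combinatorial lines $L_1,\dots,L_m \subseteq [k]^{2m}$ with the crucial property that the only quasilines contained in their union are the lines $L_1,\dots,L_m$ themselves.

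Next I would fix an arbitrary enumeration $e_1,\dots,e_m$ of the edges of $G$ and set $L_{e_i} = L_i$ for each $i\in [m]$. To define the projection $\psi_0$, for every $i\in[m]$ I pick an arbitrary bijection between the $k$ points of $L_i$ (it has exactly $k$ points because it is a combinatorial line over $[k]$) and the $k$ vertices of the edge $e_i$, and let $\psi_0$ agree with this bijection on $L_i$. This defines $\psi_0 \colon P_0 \to V(G)$ on the disjoint union $P_0 = \bigdcup_{e\in E(G)} L_e$, and by construction $\psi_0[L_e] = e$ for every $e\in E(G)$.

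It remains to verify that $\Pi_0 = (P_0, \psi_0)$ is a picture in the sense of Definition~\ref{dfn:pic}. Given any quasiline $L \subseteq P_0$, Lemma~\ref{lem:41} forces $L = L_{e}$ for some $e \in E(G)$. In particular $L$ is a combinatorial line, and $\psi_0[L] = e \in E(G)$, so both clauses of the definition of a picture are satisfied.

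There is no real obstacle here: the entire content of the lemma is already packaged into Lemma~\ref{lem:41}, which supplies combinatorial lines in a generic enough position that no spurious quasilines arise. The only thing to check is that the construction permits us to choose $\psi_0$ freely on each individual $L_e$, which is immediate since the lines $L_e$ are pairwise disjoint.
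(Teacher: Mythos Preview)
Your proof is correct and follows essentially the same approach as the paper: both apply Lemma~\ref{lem:41} with $m=|E(G)|$, assign the resulting lines to edges via an enumeration, and use the disjointness to define $\psi_0$. Your version is slightly more explicit in spelling out the bijection defining $\psi_0$ and in verifying the picture axioms, but the argument is the same.
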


\begin{proof}
	Set $m=|E(G)|$, fix an arbitrary enumeration $E(G)=\{e_1, \dots, e_m\}$,
	and consider the combinatorial lines ${L_1, \dots, L_m\subseteq [k]^{2m}}$   
	obtained in Lemma~\ref{lem:41}. Define $L_{e_i}=L_i$ for every $i\in [m]$
	and set $P_0=\bigdcup_{e\in E(G)}L_e$. Since these lines are mutually disjoint, there 
	is a map $\psi_0\colon P_0\lra V(G)$ such that $\psi_0[L_e]=e$ holds for 
	every $e\in E(G)$. Now $(P_0, \psi_0)$ is the desired picture. 
\end{proof}

Graphically, the picture $\Pi_0$ can be represented as in Figure \ref{fig:fig1}. 
On the vertical projection we have our $k$-uniform hypergraph $G$ with labelled edges 
$\{e_1,\ldots,e_m\}$. For each edge $e_j$ there is a corresponding combinatorial 
line $L_j$ drawn in the same colour. The music lines $\Pi_{0,x}=\psi_0^{-1}(x)$ 
are visualised as dashed horizontal lines. 

\begin{figure}[h]
\centering
{\hfil \begin{tikzpicture}[scale=0.7]    
    \coordinate (p0) at (0,8);
    \coordinate (p1) at (0,7);
    \coordinate (p2) at (0,6);
    \coordinate (p3) at (0,5);
    \coordinate (p4) at (0,4);
    \coordinate (p5) at (0,3);
    \coordinate (p6) at (0,2);
    \coordinate (p7) at (0,1);
    \coordinate (p8) at (0,0);
    \coordinate (p9) at (0,-1);
    \coordinate (q1) at (8,7);
    \coordinate (q2) at (8,6);
    \coordinate (q3) at (8,5);
    \coordinate (q4) at (8,4);
    \coordinate (q5) at (8,3);
    \coordinate (q6) at (8,2);
    \coordinate (q7) at (8,1);
    \coordinate (q8) at (8,0);
    \coordinate (x1) at (2,7);
    \coordinate (x2) at (2,6);
    \coordinate (x3) at (2,4);
    \coordinate (y1) at (4,5);
    \coordinate (y2) at (4,3);
    \coordinate (y3) at (4,2);
    \coordinate (z1) at (6,4);
    \coordinate (z2) at (6,1);
    \coordinate (z3) at (6,0);

    \def\bluepath{
    (p1) arc
    [
        start angle=90,
        end angle=270,
        x radius=1.5,
        y radius =1.5
    ] --
     (p1) arc
   [
        start angle=90,
        end angle=270,
        x radius=0.4,
        y radius =0.5
   ] --
   (p2) arc
    [
        start angle=90,
        end angle=270,
        x radius=0.6,
        y radius =1
    ] 
    }
    \draw[blue] \bluepath;
    \fill[blue,opacity=0.2,even odd rule] \bluepath;

    \def\redpath{
    (p3) arc
    [
        start angle=90,
        end angle=270,
        x radius=1.5,
        y radius =1.5
    ] --
    (p5) arc
   [
        start angle=90,
        end angle=270,
        x radius=0.4,
        y radius =0.5
   ] --
   (p3) arc
    [
        start angle=90,
        end angle=270,
        x radius=0.6,
        y radius =1
    ] 
    }
    \draw[red] \redpath;
    \fill[red,opacity=0.2,even odd rule] \redpath;

    \def \greenpath{
    (p4) arc
    [
        start angle=90,
        end angle=270,
        x radius=1.8,
        y radius =2
    ] --
    (p4) arc
   [
        start angle=90,
        end angle=270,
        x radius=1,
        y radius= 1.5
   ] --
   (p7) arc
    [
        start angle=90,
        end angle=270,
        x radius=0.4,
        y radius =0.5
    ] 
    }
    \draw[green!60!black] \greenpath;
    \fill[green!60!black,opacity=0.2,even odd rule] \greenpath;
    
    \draw (p0)--(p9);
    \draw[dashed] (p1)--(q1);
    \draw[dashed] (p2)--(q2);
    \draw[dashed] (p3)--(q3);
    \draw[dashed] (p4)--(q4);
    \draw[dashed] (p5)--(q5);
    \draw[dashed] (p6)--(q6);
    \draw[dashed] (p7)--(q7);
    \draw[dashed] (p8)--(q8);
    
 	\draw[fill] (p1) circle [radius=0.07];
 	\draw[fill] (p2) circle [radius=0.07];
 	\draw[fill] (p3) circle [radius=0.07];
 	\draw[fill] (p4) circle [radius=0.07];
 	\draw[fill] (p5) circle [radius=0.07];
 	\draw[fill] (p6) circle [radius=0.07];
        \draw[fill] (p7) circle [radius=0.07];
 	\draw[fill] (p8) circle [radius=0.07];
        \draw[fill][blue] (x1) circle [radius=0.07];
 	\draw[fill][blue] (x2) circle [radius=0.07];
 	\draw[fill][blue] (x3) circle [radius=0.07];
 	\draw[fill][red] (y1) circle [radius=0.07];
 	\draw[fill][red] (y2) circle [radius=0.07];
 	\draw[fill][red] (y3) circle [radius=0.07];
        \draw[fill][green!60!black] (z1) circle [radius=0.07];
 	\draw[fill][green!60!black] (z2) circle [radius=0.07];
        \draw[fill][green!60!black] (z3) circle [radius=0.07];

    \def \blueedge{
    (1.7,4) -- (1.7,7) -- (1.7,7) arc 
    [
        start angle=180,
        end angle=0,
        x radius=0.3,
        y radius =0.3
    ] --
    (2.3,4) -- (2.3,4) arc
    [
        start angle=360,
        end angle=180,
        x radius=0.3,
        y radius =0.3
    ]
    }
    \draw[blue] \blueedge;
    \fill[blue,opacity=0.2] \blueedge;

    \def \rededge{
    (3.7,2) -- (3.7,5) -- (3.7,5) arc 
    [
        start angle=180,
        end angle=0,
        x radius=0.3,
        y radius =0.3
    ] --
    (4.3,2) -- (4.3,2) arc
    [
        start angle=360,
        end angle=180,
        x radius=0.3,
        y radius =0.3
    ]
    }
    \draw[red] \rededge;
    \fill[red,opacity=0.2] \rededge;

    \def \greenedge{
    (5.7,0) -- (5.7,4) -- (5.7,4) arc 
    [
        start angle=180,
        end angle=0,
        x radius=0.3,
        y radius =0.3
    ] --
    (6.3,0) -- (6.3,0) arc
    [
        start angle=360,
        end angle=180,
        x radius=0.3,
        y radius =0.3
    ]
    }
    \draw[green!60!black] \greenedge;
    \fill[green!60!black,opacity=0.2] \greenedge;

    \node (q1) at (q1) [right] {$\Pi_{0,1}$};
    \node (q2) at (q2) [right] {$\Pi_{0,2}$};
    \node (q3) at (q3) [right] {$\Pi_{0,3}$};
    \node (q4) at (q4) [right] {$\Pi_{0,4}$};
    \node (q5) at (q5) [right] {$\Pi_{0,5}$};
    \node (q6) at (q6) [right] {$\Pi_{0,6}$};
    \node (q7) at (q7) [right] {$\Pi_{0,7}$};
    \node (q8) at (q8) [right] {$\Pi_{0,8}$};
    \node (e1) at (-1.5,5.5) [left] {$e_1$};
    \node (e2) at (-1.5,3.5) [left] {$e_2$};
    \node (e3) at (-1.8,2) [left] {$e_3$};
    \node (g) at (-1.5,7.5) [above] {$G$};
    \node (p) at (7,7.5) [above] {$P_0$};
    
\end{tikzpicture}\hfil}
    \caption{A visual representation of $\Pi_0$}
    \label{fig:fig1}
\end{figure}
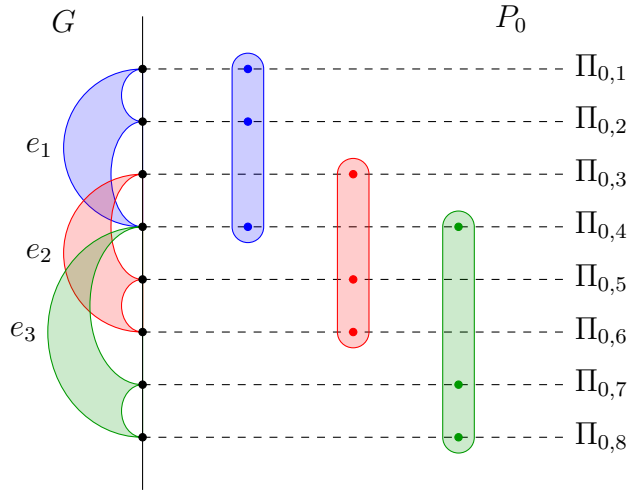

\subsection{Partite amalgamation}
In an attempt to aid the reader's orientation we would briefly like 
to mention how partite amalgamations were used in~\cite{NR79} for proving the 
existence of hypergraphs with large chromatic number and large girth (see 
also the recent survey~\cite{jarik}*{\S3.3} for more context and additional figures). 

In that argument one works with $n$-partite $k$-uniform hypergraphs instead 
of the present pictures. 
Suppose that we just have constructed some such hypergraph $\Pi$, 
and that for some index $i\in [n]$ there are $m_i$ vertices in the $i^{\mathrm{th}}$
vertex class of $\Pi$. When we have a further~$m_i$-uniform hypergraph~$\ccL$
in mind, we define the amalgamation $\Sigma=\Pi\conc\ccL$ as follows: 
The $i^{\mathrm{th}}$ vertex class of $\Sigma$ is $V(\ccL)$, each edge of $\ccL$
gets extended to its own copy of $\Pi$, distinct copies of this form are only 
allowed to intersect in the $i^{\mathrm{th}}$ vertex class, and the union of 
all these copies is the desired $n$-partite hypergraph $\Sigma$ (see 
Figure~\ref{fig:ref}). Starting with a hypergraph that looks like our 
picture zero and performing such amalgamation steps iteratively for all $i\in [n]$
we end up getting hypergraphs of large girth and chromatic number. 

Now with every picture $\Pi=(P, \psi)$ in the sense of the present work  
we can associate the partite hypergraph with vertex set~$P$ whose edges correspond 
to the combinatorial lines in~$P$. It is our intention that on the level of 
these associated hypergraphs the amalgamation of pictures introduced next 
should resemble the above construction.

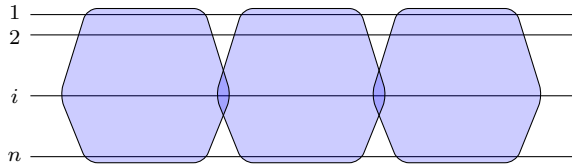
\begin{figure}[h!]
	\centering
	\begin{tikzpicture}
		
		\def\h{.27}
		\def\w{.3}
		
		\draw (-12*\w,8*\h)--(12*\w,8*\h);
		\draw (-12*\w,7*\h)--(12*\w,7*\h);
		\draw (-12*\w,4*\h)--(12*\w,4*\h);
		\draw (-12*\w,1*\h)--(12*\w,1*\h);
		
	\fill [blue, opacity=.2,rounded corners] (-2.6*\w,0.7*\h)--(2.6*\w,0.7*\h)--(3.8*\w,3.6*\w)--(2.6*\w,8.3*\h)--(-2.6*\w,8.3*\h)--(-3.8*\w,3.6*\w)--cycle;
	\fill [blue, opacity=.2,rounded corners] (9.5*\w,0.7*\h)--(4.3*\w,0.7*\h)--(3.1*\w,3.6*\w)--(4.3*\w,8.3*\h)--(9.5*\w,8.3*\h)--(10.7*\w,3.6*\w)--cycle;	
	\fill [blue, opacity=.2,rounded corners] (-9.5*\w,0.7*\h)--(-4.3*\w,0.7*\h)--(-3.1*\w,3.6*\w)--(-4.3*\w,8.3*\h)--(-9.5*\w,8.3*\h)--(-10.7*\w,3.6*\w)--cycle;
		
	\draw [rounded corners] (-2.6*\w,0.7*\h)--(2.6*\w,0.7*\h)--(3.8*\w,3.6*\w)--(2.6*\w,8.3*\h)--(-2.6*\w,8.3*\h)--(-3.8*\w,3.6*\w)--cycle;
	\draw [rounded corners] (9.5*\w,0.7*\h)--(4.3*\w,0.7*\h)--(3.1*\w,3.6*\w)--(4.3*\w,8.3*\h)--(9.5*\w,8.3*\h)--(10.7*\w,3.6*\w)--cycle;
	\draw [rounded corners] (-9.5*\w,0.7*\h)--(-4.3*\w,0.7*\h)--(-3.1*\w,3.6*\w)--(-4.3*\w,8.3*\h)--(-9.5*\w,8.3*\h)--(-10.7*\w,3.6*\w)--cycle;

	\node at (-12.7*\w,8.1*\h) {\tiny $1$};
	\node at (-12.7*\w,6.9*\h) {\tiny $2$};
	\node at (-12.7*\w,4*\h) {\tiny $i$};
	\node at (-12.7*\w,1*\h) {\tiny $n$};
		
	\end{tikzpicture}
	\caption{The hypergraph amalgamation $\Sigma=\Pi\conc\ccL$, where blue shapes 
	indicate copies of $\Pi$.}
	\label{fig:ref}
\end{figure}
 
Here are the precise details. Suppose that we have a 
picture $\Pi=(P, \psi_\Pi)$ over some $k$-uniform hypergraph $G$, 
where $P\subseteq [k]^m$. Let $x$ be a vertex 
of $G$ and suppose further that, viewing the music line~$\Pi_x$ as an alphabet 
in its own right, we are given a collection $\ccL\subseteq\ccL(\Pi_x^n)$ of 
combinatorial lines in the $n$-dimensional Hales-Jewett cube over $\Pi_x$. 
We shall now describe the construction of a pair $\Sigma=(Q, \psi_\Sigma)$ 
consisting of a set $Q\subseteq [k]^{mn}$ and a map 
$\psi_\Sigma\colon Q\lra V(G)$. This pair~$\Sigma$, which is not necessarily 
a picture again, only depends on $\Pi$ 
and $\ccL$ and will be denoted by $\Sigma=\Pi\conc\ccL$. 

Let us fix for every line $U\in \ccL$ the combinatorial 
embedding $\eta_U\colon \Pi_x\lra (\Pi_x)^n$ whose image is $U$. 
Recalling $\Pi_x\subseteq [k]^m$ we can naturally extend $\eta_U$
to a combinatorial embedding $\eta^+_U\colon [k]^m\lra [k]^{mn}$. 
Now we set $P^U=\eta_U^+(P)$ and define $\psi_{U}\colon P^U\lra V(G)$
to be the composition $\psi_{U}=\psi_\Pi\circ (\eta_U^+|_{P^U})^{-1}$. 

\begin{fact}\label{f:44}
\begin{enumerate}[labelsep=0pt, itemindent=20pt, leftmargin=0pt, label=\rmlabel]
	\item\label{it:441} $\,$ For every combinatorial line $U\in \ccL$ the 
			pair $\Pi^U=(P^U, \psi_{U})$ is a picture over $G$ with $\Pi^U_x=U$.
	\item\label{it:442} $\,\,$ If $U, V\in \ccL$ are distinct, 
			then $P^U\cap P^V=U\cap V$. 
\end{enumerate}
\end{fact}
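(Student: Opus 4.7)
The plan is to treat the two parts of Fact~\ref{f:44} separately: part~\ref{it:441} reduces to a quasiline-pullback through the embedding $\eta_U^+$, while part~\ref{it:442} follows from a short case analysis on the blockwise coordinate structure of $[k]^{mn}\cong ([k]^m)^n$. The only mildly delicate point is the quasiline pullback; everything else is bookkeeping.

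For~\ref{it:441}, the identity $\Pi^U_x = U$ is immediate since $\psi_U^{-1}(x) = \eta_U^+\bigl(\psi_\Pi^{-1}(x)\bigr) = \eta_U^+(\Pi_x) = \eta_U(\Pi_x) = U$, using that $\eta_U^+$ restricts to $\eta_U$ on $\Pi_x$. To verify the picture property, I would take a quasiline $L \subseteq P^U$ and set $L' = (\eta_U^+)^{-1}(L) \subseteq P$, a set of $k$ distinct points by injectivity. The key step is showing that $L'$ is itself a quasiline in $[k]^m$. Writing $[n] = C \dcup M$ and $g\colon C \to \Pi_x$ for the data of $\eta_U\colon \Pi_x \to \Pi_x^n$, the extension $\eta_U^+$ over $[k]$ has partition $[mn] = C' \dcup M'_1 \dcup \cdots \dcup M'_m$ with $C' = \{(i,j) : i \in C,\, j \in [m]\}$ and $M'_j = \{(i,j) : i \in M\}$; for $(i,j) \in M'_j$ the coordinate value of $\eta_U^+(c)$ at $(i,j)$ equals $c_j$, so the per-coordinate ``identical or mutually distinct'' dichotomy for $L$ transfers to the $j$-th coordinate of $L'$, and the entries on $C'$ are automatically constant. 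Since $\Pi$ is a picture, $L'$ must then be a combinatorial line with $\psi_\Pi[L'] \in E(G)$; applying $\eta_U^+$, which preserves combinatorial lines, yields $L = \eta_U^+(L')$ combinatorial and $\psi_U[L] = \psi_\Pi[L'] \in E(G)$.

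For~\ref{it:442}, the inclusion $U \cap V \subseteq P^U \cap P^V$ is immediate from $U = \Pi^U_x \subseteq P^U$ and its analogue for $V$. For the reverse inclusion I would identify $[k]^{mn}$ with $([k]^m)^n$ and use the block projections $\pi_i\colon ([k]^m)^n \to [k]^m$. For any $W \in \ccL$, associated with partition $[n] = C_W \dcup M_W$ and function $g_W\colon C_W \to \Pi_x$, one has $\pi_i(\eta_W^+(c)) = g_W(i)$ for $i \in C_W$ and $\pi_i(\eta_W^+(c)) = c$ for $i \in M_W$. Given $p = \eta_U^+(a) = \eta_V^+(b) \in P^U \cap P^V$, I would compare blocks: any $i \in M_U \cap C_V$ forces $a = g_V(i) \in \Pi_x$ and hence $p = \eta_U(a) \in U$; symmetrically, $C_U \cap M_V \ne \emptyset$ gives $p \in V$. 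If both intersections are empty then $M_U = M_V$, $C_U = C_V$, and then $g_U = g_V$, forcing $\eta_U = \eta_V$ and $U = V$, a contradiction. So $p$ belongs to at least one of $U, V$; say $p \in U \subseteq \Pi_x^n$, and then for any $i \in M_V$ (nonempty by the combinatorial embedding condition) one gets $b = \pi_i(p) \in \Pi_x$, hence $p = \eta_V(b) \in V$, giving $p \in U \cap V$ as desired.
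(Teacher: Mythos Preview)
Your proof is correct and follows essentially the same approach as the paper. For part~\ref{it:441} you make explicit the quasiline pullback that the paper states in one line, and for part~\ref{it:442} the paper's argument is organised slightly differently---it observes directly that if some block $z(i)\notin\Pi_x$ then the line $W$ with $z\in\eta_W^+(P)$ is uniquely determined, forcing $U=V$, whereas you reach the same conclusion via a case split on $M_U\cap C_V$ and $C_U\cap M_V$---but the underlying idea (reading off the embedding data from the block decomposition of $([k]^m)^n$) is identical.
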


\begin{proof}
	Beginning with~\ref{it:441} we consider an arbitrary quasiline $L\subseteq P^U$.
	Now $L'=(\eta^+_U)^{-1}[L]$ is a quasiline in $P$. Since $\Pi$ is a picture, this 
	implies that $L'$ is actually a combinatorial line and $\psi_U[L]=\psi_\Pi[L']$
	is an edge of $G$. The first statement entails that $L$ is a combinatorial line 
	as well. Finally, we have 
	$\Pi^U_x=\psi_U^{-1}(x)=(\eta^+_U\circ \psi_\Pi^{-1})(x)=\eta_U^+[\Pi_x]=U$.
	
	Proceeding with~\ref{it:442} we consider an arbitrary 
	point $z=(z(1), \dots, z(n))\in P^U\cap P^V$, where $z(1), \dots, z(n)\in P$.
	If one of the points $z(i)$ was not in $\Pi_x$, then there could be at most one line 
	$W\subseteq (\Pi_x)^n$ with $z\in \eta^+_W(P)$, whence $U=V$. 
	This argument shows $z(1), \dots, z(n)\in \Pi_x$, which in turn 
	implies $z\in \eta_U^+[\Pi_x]\cap \eta_V^+[\Pi_x]=U\cap V$. Thus we have 
	$P^U\cap P^V\subseteq U\cap V$ and owing to $U\subseteq P^U$, $V\subseteq P^V$
	the reverse inclusion is clear.  
\end{proof}
 
Now the desired pair $\Sigma=(Q, \psi_\Sigma)=\Pi\conc\ccL$ is defined by
\[
	Q=\bigcup_{U\in\ccL} P^U
	\quad\text{ and } \quad 
	\psi_\Sigma=\bigcup_{U\in\ccL} \psi_{U}\,.
\]
The pictures $\Pi^U$ occurring in Fact~\ref{f:44}\ref{it:441} are called the 
{\it standard copies} of $\Pi$ in $\Sigma$. Part~\ref{it:442} of Fact~\ref{f:44}
tells us that any two standard copies can intersect only on the music line 
$\Sigma_x$. Therefore, $\psi_\Sigma$ is indeed a function from $Q$ to $V(G)$. 

Summarising the discussion so far, one can interpret the construction 
of $\Sigma$ as follows (see Figure~\ref{fig:fig2}). First, we construct 
the music line $\Sigma_x=\bigcup\ccL$ and then for each combinatorial line 
$U\in \ccL$ we construct a standard copy~$\Pi^U$ of~$\Pi$. 
The union of all these standard copies is exactly~$\Sigma$.

\begin{figure}[h]
\centering
{\hfil \begin{tikzpicture}[scale=0.7]    
    \coordinate (l0) at (-2,0);
    \coordinate (l1) at (-2,2.5);
    \coordinate (l2) at (-2,6);
    \coordinate (r1) at (2,2.5);

    \coordinate (f0) at (6,0);
    \coordinate (f1) at (6,2.5);
    \coordinate (f2) at (6,6);
    \coordinate (g1) at (15.5,2.5);

    \coordinate (v1) at (-1,2.5);
    \coordinate (v2) at (-0.7,4);
    \coordinate (v3) at (0,5);
    \coordinate (v4) at (0.6,3.7);
    \coordinate (v5) at (1,2.5);
    \coordinate (v6) at (0.8,2);
    \coordinate (v7) at (0,0.5);
    \coordinate (v8) at (-0.6,1.9);

    \coordinate (a1) at (7,2.5);
    \coordinate (a2) at (7.3,4);
    \coordinate (a3) at (8,5);
    \coordinate (a4) at (8.6,3.7);
    \coordinate (a5) at (9,2.5);
    \coordinate (a6) at (8.8,2);
    \coordinate (a7) at (8,0.5);
    \coordinate (a8) at (7.4,1.9);

    \coordinate (b1) at (8.8,2.5);
    \coordinate (b2) at (9.1,4);
    \coordinate (b3) at (9.8,5);
    \coordinate (b4) at (10.4,3.7);
    \coordinate (b5) at (10.8,2.5);
    \coordinate (b6) at (10.6,2);
    \coordinate (b7) at (9.8,0.5);
    \coordinate (b8) at (9.2,1.9);

    \coordinate (c1) at (12.4,2.5);
    \coordinate (c2) at (12.7,4);
    \coordinate (c3) at (13.4,5);
    \coordinate (c4) at (14,3.7);
    \coordinate (c5) at (14.4,2.5);
    \coordinate (c6) at (14.2,2);
    \coordinate (c7) at (13.4,0.5);
    \coordinate (c8) at (12.8,1.9);

    \coordinate (d1) at (10.6,2.5);
    \coordinate (d2) at (10.9,4);
    \coordinate (d3) at (11.6,5);
    \coordinate (d4) at (12.2,3.7);
    \coordinate (d5) at (12.6,2.5);
    \coordinate (d6) at (12.4,2);
    \coordinate (d7) at (11.6,0.5);
    \coordinate (d8) at (11,1.9);
    
    \draw (l0)--(l2);
    \draw (f0)--(f2);
    \draw[dashed] (l1)--(r1);
    \draw[dashed] (f1)--(g1);
    
    \draw[fill][gray, opacity=0.4] (0,2.5) ellipse (1 and 0.2);
    \draw (0,2.5) ellipse (1 and 0.2);
    
    \draw[fill][gray, opacity=0.4] (8,2.5) ellipse (1 and 0.2);
    \draw (8,2.5) ellipse (1 and 0.2);
    
    \draw[fill][gray, opacity=0.4] (9.8,2.5) ellipse (1 and 0.2);
    \draw (9.8,2.5) ellipse (1 and 0.2);

    \draw[fill][gray, opacity=0.4] (11.6,2.5) ellipse (1 and 0.2);
    \draw (11.6,2.5) ellipse (1 and 0.2);

    \draw[fill][green, opacity=0.4] (13.4,2.5) ellipse (1 and 0.2);
    \draw (13.4,2.5) ellipse (1 and 0.2);

    \draw (v1) to [closed, curve through={(v2) (v3) (v4) (v5) (v6) (v7) (v8)} ] (v1);
    \draw (a1) to [closed, curve through={(a2) (a3) (a4) (a5) (a6) (a7) (a8)} ] (a1);
    \draw (b1) to [closed, curve through={(b2) (b3) (b4) (b5) (b6) (b7) (b8)} ] (b1);
    \draw (d1) to [closed, curve through={(d2) (d3) (d4) (d5) (d6) (d7) (d8)} ] (d1);
    \draw[green!60!black] (c1) to [closed, curve through={(c2) (c3) (c4) (c5) (c6) (c7) (c8)} ] (c1);

    \draw[fill] (l1) circle [radius=0.07];
    \draw[fill] (f1) circle [radius=0.07];
 	
    \draw[->] (3,2.5)--(4,2.5);

    \node (l1) at (l1) [left] {$\Pi_x$};
    \node (f1) at (f1) [left] {$\Sigma_x$};
    \node (v3) at (v3) [above] {$P$};
    \node (pL) at (14.4,3) [right][green!60!black] {$U\cong \Pi_x$};
    \node (c3) at (c3) [above][green!60!black] {$P^U\cong P$};
    \node (q6) at (8,6) [right] {$Q$};
                                
\end{tikzpicture}\hfil}
    \caption{The partite amalgamation $\Sigma=\Pi\conc \ccL$.}
    \label{fig:fig2}
\end{figure}

In general, the partite amalgamation $\Sigma=\Pi\conc \ccL$ does not necessarily 
create a new picture, because there could be ``unintended'' quasilines in $Q$ 
whose points belong to several distinct standard copies of $\Pi$. 
The main result of this subsection shows how we will avoid this situation
in the future. 
  
\begin{proposition}\label{prop:amal}
	Let $\Pi=(P, \psi_\Pi)$ be a picture over a $k$-uniform 
	hypergraph $G$, where $k\ge 3$ and if $k=3$, then $G$ is 
	$K_4^{(3)-}$-free. If $x$ denotes a vertex of $G$ and  
	$\ccL\subseteq \ccL(\Pi_x^n)$ is a collection of 
	combinatorial lines containing neither tripods nor triangles,	
	then $\Pi\conc\ccL$ is again a picture over $G$. 
\end{proposition}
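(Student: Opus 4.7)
Consider an arbitrary quasiline $L = \{z_1, \dots, z_k\} \subseteq Q$ and, for each $i \in [k]$, fix some $U_i \in \ccL$ with $z_i \in P^{U_i}$; let $p_i = (\eta_{U_i}^+)^{-1}(z_i) \in P$ and $v_i = \psi_\Pi(p_i) = \psi_\Sigma(z_i)$. The plan is to show that the $U_i$'s may be chosen so as to coincide with a single line $U \in \ccL$, for once this is achieved we have $L \subseteq P^U$ and Fact~\ref{f:44}\ref{it:441} immediately yields that $L$ is a combinatorial line with $\psi_\Sigma[L] \in E(G)$.

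Decompose $[n] = J \dcup J^*$ by setting $J^* = \{j \in [n] : z_1(j) = \dots = z_k(j)\}$. Since $L$ is a quasiline, for each $j \in J$ the projection $L(j) = \{z_1(j), \dots, z_k(j)\} \subseteq P$ is itself a $k$-element quasiline in $[k]^m$ and hence, by the picture property of $\Pi$, a combinatorial line with $\psi_\Pi[L(j)] \in E(G)$. The link with the $U_i$'s is the observation that $\psi_\Pi(z_i(j))$ equals $v_i$ when $j \in M_{U_i}$ and equals $x$ when $j \in C_{U_i}$; since $\psi_\Pi[L(j)]$ is a $k$-element edge, at most one index $i$ can satisfy $j \in C_{U_i}$ for a given $j \in J$, and no two contributed values may coincide. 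This yields a partition $J = T_\emptyset \dcup T_{\{1\}} \dcup \dots \dcup T_{\{k\}}$ with $T_\emptyset = J \cap \bigcap_i M_{U_i}$ and $T_{\{i\}} = J \cap C_{U_i} \cap \bigcap_{i' \ne i} M_{U_{i'}}$, and it also forces, in the generic case, all $v_i$'s to be pairwise distinct and distinct from $x$.

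The decisive combinatorial input is the elementary fact that any two distinct points of $[k]^m$ lie on at most one combinatorial line. Applied to $L(j) = \{p_{i'} : i' \ne i\} \cup \{g_{U_i}(j)\}$ for $j \in T_{\{i\}}$, this forces $g_{U_i}$ to be constant on $T_{\{i\}}$ with some value $p_i^* \in \Pi_x$. If both $T_\emptyset$ and some $T_{\{i\}}$ are nonempty, comparing $\{p_1, \dots, p_k\}$ with $\{p_{i'} : i' \ne i\} \cup \{p_i^*\}$ (which share the $k-1 \ge 2$ points $\{p_{i'} : i' \ne i\}$) forces $p_i = p_i^* \in \Pi_x$, contradicting $v_i \ne x$. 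Consequently, either $J = T_\emptyset$, in which case a parallel analysis over $J^*$ (relying on distinctness of the $v_i$'s) forces $M_{U_i} = J$ and $g_{U_i}|_{J^*} = g_{U_{i'}}|_{J^*}$ for all $i, i'$, so $U_1 = \dots = U_k$ and we are done; or $T_\emptyset = \emptyset$.

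The main obstacle is the remaining case $T_\emptyset = \emptyset$ with $S := \{i : T_{\{i\}} \ne \emptyset\}$ nonempty, and this is where the hypotheses on $\ccL$ and on $G$ enter. Tracking the quasiline conditions across $J^*$ shows $M_{U_i} = J \setminus T_{\{i\}}$ for $i \in S$, $M_{U_l} = J$ for $l \notin S$, and $U_l = U_{l'}$ whenever $l, l' \notin S$; call this common line $W$. For $k \ge 4$ and $|S| \ge 2$, picking $i \ne i'$ in $S$ gives two combinatorial lines $\{p_l : l \ne i\} \cup \{p_i^*\}$ and $\{p_l : l \ne i'\} \cup \{p_{i'}^*\}$ sharing $k-2 \ge 2$ points; they coincide, and the resulting identification forces $p_i \in \Pi_x$, a contradiction. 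For $k = 3$ with $|S| = 2$, writing $S = \{i_1, i_2\}$, one has $M_W = M_{U_{i_1}} \dcup M_{U_{i_2}}$ with the three distinct lines $U_{i_1}, U_{i_2}, W$ pairwise intersecting; they share a common point precisely when $p_{i_1}^* = p_{i_2}^*$, so the configuration is either a tripod or a triangle in $\ccL$, contradicting its hypotheses. For $k = 3$ with $|S| = 3$ we obtain the three distinct edges $\{x, v_2, v_3\}$, $\{x, v_1, v_3\}$, $\{x, v_1, v_2\}$ of $G$ on the four vertices $\{x, v_1, v_2, v_3\}$, i.e., a copy of $K_4^{(3)-}$ in $G$, contradicting the standing assumption. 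Finally, the degenerate configurations in which some $v_i = x$ or two $v_i$'s coincide reduce either to an immediate impossibility (two or more coincidences prevent $|\psi_\Pi[L(j)]| = k$) or, via an analogous uniqueness argument applied to $\{p_{i'} : i' \ne i\}$ for an appropriate $i$, either to the same tripod/triangle/$K_4^{(3)-}$ analysis or to the conclusion that all $U_i$'s can be chosen equal, completing the proof.
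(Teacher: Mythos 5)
Your decomposition $J = T_\emptyset \dcup T_{\{1\}} \dcup \dots \dcup T_{\{k\}}$ according to which of the $U_i$ makes $j$ a constant coordinate is a genuine alternative to the paper's strategy of fixing one coordinate $s^\star\in[n]\setminus C$ and comparing every $L_s$ with $L_{s^\star}$, and for $k=3$ your $|S|=2$ and $|S|=3$ cases do reproduce the paper's tripod/triangle and $K_4^{(3)-}$ alternatives. The serious gap is in the $k\ge 4$ step. The coincidence of the two lines $\{p_l:l\ne i\}\cup\{p_i^*\}$ and $\{p_l:l\ne i'\}\cup\{p_{i'}^*\}$ does \emph{not} force $p_i\in\Pi_x$. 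Since $p_i^*,p_{i'}^*\in\Pi_x$ while (for $|S|\ge2$) all $p_l\notin\Pi_x$, matching up the elements inside and outside $\Pi_x$ yields $p_i=p_{i'}$ and $p_i^*=p_{i'}^*$ --- a perfectly consistent configuration, and in fact exactly what the paper meets in its First Case, where $c_1=c_k$. One then checks that $U_i$, $U_{i'}$ and the common line $W=U_l$ ($l\notin S$) satisfy $M_W=M_{U_i}\dcup M_{U_{i'}}$ and all pass through $\eta^+_W(p_i^*)$, so they form a \emph{tripod}. Thus for $k\ge4$, exactly as for $k=3$, the contradiction has to be drawn from the tripod-freeness of $\ccL$; your $k\ge4$ argument never invokes that hypothesis, which should have been a red flag.

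There are two further gaps. The dichotomy ``either $J=T_\emptyset$ or $T_\emptyset=\emptyset$'' is not exhaustive: $T_\emptyset\ne\emptyset$ together with $T_{\{i\}}\ne\emptyset$ only yields $p_i=p_i^*\in\Pi_x$, and the claimed contradiction with $v_i\ne x$ presupposes $v_i\ne x$, which you have not established here --- the unique index $i$ with $p_i\in\Pi_x$ may well have $T_{\{i\}}\ne\emptyset$, and one must then verify directly that $L$ lies in the standard copy given by the remaining $U_l$'s. Similarly, the subcase $T_\emptyset=\emptyset$ and $|S|=1$ makes your formula $M_{U_i}=J\setminus T_{\{i\}}$ read $M_{U_i}=\emptyset$, which is impossible; there one has $p_i\in\Pi_x$ and $M_{U_i}\subseteq J^*$, and $U_i$ must be replaced by the common line $W$. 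These degenerate cases are recoverable, but the closing sentence of your proof gestures at them without carrying out the argument.
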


\begin{proof}	Continuing our earlier notation we again suppose that $P\subseteq [k]^m$
	and we write $\Sigma=(Q, \psi_\Sigma)$ for the pair $\Sigma=\Pi\conc\ccL$.
	The main task is to establish the following statement. 
		\begin{equation}\label{eq:qlip}
	\text{\it
       For every quasiline $L\subseteq Q$ there is some $U\in \ccL$ 
       such that $L\subseteq P^U$.}
	\end{equation}
	
	In other words, the only quasilines in $Q$ are those contained in standard 
	copies of~$\Pi$. Assuming for the moment that this holds, 
	it follows as in the proof of Fact~\ref{f:44}\ref{it:441} that all quasilines 
	$L\subseteq Q$ are combinatorial lines 
	projecting onto edges of $G$, i.e., that $\Sigma$ is indeed a picture.  
	Thus, it remains to show~\eqref{eq:qlip}. 
		
	To this end we write $L=\{\ell_1,\dots,\ell_k\}$ and  
	$\ell_i=\bigl(\ell_i(1), \dots, \ell_i(n)\bigr)$ for every $i\in [k]$, 
	where $\ell_i(1), \dots, \ell_i(n)\in P$. Since $L$ is a quasiline, 
	for every $s\in [n]$ the set 
		\[
		L_s=\bigl\{\ell_1(s), \dots, \ell_k(s)\bigr\}
	\]
		either consists of a single element, or it is a quasiline contained in~$P$.
	In the latter case, $L_s$ is actually a combinatorial line, because $\Pi$ 
	is a picture. 	
	Owing to the construction of $\Sigma$ there exist 
	lines $U_1, \dots, U_k\in\ccL$ such that $\ell_i\in\Pi^{U_i}$ 
	for every $i\in [k]$ and there are points $c_1, \dots, c_k\in P$ 
	such that $\ell_i=\eta^+_{U_i}(c_i)$. For clarity we point out that 
	for $\ell_i\not\in \Sigma_x$ the pair $(U_i, c_i)$ is uniquely determined 
	by $\ell_i$. On the other hand, if~$\ell_i\in \Sigma_x$, then there can be 
	several legitimate choices for $U_i$, but $c_i\in \Pi_x$ will then necessarily 
	be true.
	
	In general, we have 
		\begin{equation}\label{eq:2214}
		\ell_i(s)\in\Pi_x\cup\{c_i\}
		\text{ for all $i\in [k]$ and $s\in [n]$,}
	\end{equation}
	whence $L_s\subseteq \Pi_x\cup\{c_1, \dots, c_k\}$. The remainder of the proof 
	exploits heavily that the set $\Pi_x\cup\{c_1, \dots, c_k\}$ can contain only
	very few combinatorial lines.  
	
	\begin{claim}\label{clm:1741}
		Every combinatorial line $K\subseteq \Pi_x\cup\{c_1, \dots, c_k\}$
		contains at most one point from $\Pi_x$ and at least $k-1$ points 
		from $\{c_1, \dots, c_k\}\sm \Pi_x$.
	\end{claim}
	
	\begin{proof}
		Since $\psi_\Pi$ projects $K$ onto an edge of $G$ while all points in $\Pi_x$
		are projected to the same vertex $x$, we have $|K\cap \Pi_x|\le 1$. 
		Due to $|K|=k$ the second assertion follows. 
	\end{proof}

	Let $C=\{s\in [n]\colon |L_s|=1\}$ be the set of coordinates where the points 
	of our quasiline~$L$ agree. So for every $c\in C$ there is some point $\ell(c)$
	such that $\ell(c)=\ell_1(c)=\dots =\ell_k(c)$. Because of $|L|=k$ we have 
		\begin{equation}\label{eq:Cn}
		C\ne [n]\,.
	\end{equation}
	
	Assume for the sake of contradiction that $\ell(c^\star)\not\in\Pi_x$ holds 
	for some $c^\star\in C$. In view of~\eqref{eq:2214} this implies 
	$c_1=\dots=c_k=\ell(c^\star)$. Now Claim~\ref{clm:1741} shows that the 
	set $\Pi_x\cup\{c_1, \dots, c_k\}$ contains no combinatorial lines, which 
	in turn leads to $C=[n]$. This contradiction to~\eqref{eq:Cn} establishes 
		\begin{equation}\label{eq:lcx}
		\ell(c)\in \Pi_x 
		\quad \text{ for all $c\in C$}. 
	\end{equation}
		
	Let us now pick an arbitrary coordinate $s^\star\in [n]\sm C$. 
	Due to~\eqref{eq:2214} and Claim~\ref{clm:1741}
	we may assume, without loss of generality, that $\ell_{s^\star}(i)=c_i\not\in \Pi_x$
	holds for every $i\in [k-1]$. Concerning the point $a=\ell_{s^\star}(k)$, however, 
	we know nothing more than that it is in $\Pi_x\cup\{c_k\}$. 
	We shall show later that the set 
		\[
		S=\bigl\{s\in [n]\sm C\colon 
		\ell_{s}(i)=\ell_{s^\star}(i) \text{ for every $i\in [k]$}\bigr\}
	\]
		is equal to $[n]\sm C$. Assuming for the moment that this is true, the proof 
	of~\eqref{eq:qlip} can be completed as follows. Let $U\subseteq \Pi_x^n$
	be the combinatorial line whose set of moving coordinates is $S$ and which 
	takes the values $\ell(c)$ on its constant coordinates $c\in C$. The definition 
	of $S$ discloses $L=\eta_U^+[L_{s^\star}]$. 
	Furthermore~\eqref{eq:lcx} and
	$c_1\not\in\Pi_x$ imply $U=U_1$ and thus we have $U\in \ccL$. 
	So altogether $U$ is the line required by~\eqref{eq:qlip}. 
	
	In the remainder of the argument we shall show that the assumption $S\ne [n]\sm C$
	leads to the contradiction that either $\ccL$ contains a tripod or a triangle,
	or $k=3$ and $G$ contains a $K_4^{(3)-}$.
	Considering the nonempty set $T=[n]\sm (C\dcup S)$ we distinguish two cases.
	 	
	\smallskip
	
	{\hskip2em \it First Case. We have $L_{t}=L_{s^\star}$ for every $t\in T$.}
	
	\smallskip
	
	Pick an arbitrary coordinate $t^\star\in T$.  
	Roughly speaking, the equality $L_{t^\star}=L_{s^\star}$ means that the 
	lines~$L_{s^\star}$ and~$L_{t^\star}$
	contain the same points, but not ``in the same order''. By the definition of $S$,
	there needs to exist some $i\in [k-1]$ such 
	that $\ell_{s^\star}(i)\ne \ell_{t^\star}(i)$ and without loss of generality we 
	can assume that this happens for $i=1$. 
	So $\ell_{t^\star}(1)\ne \ell_{s^\star}(1)=c_1$.
	
	Due to $\ell_{t^\star}(1)\in (\Pi_x\cup\{c_1\})\cap \{c_2, \dots, c_{k-1}, a\}$
	we have $\ell_{t^\star}(1)=a\in \Pi_x$. Now~\eqref{eq:2214} tells 
	us $\ell_{t^\star}(i)=c_i$ for every $i\in [2, k-1]$ and together with
	$L_{t^\star}=L_{s^\star}$ we obtain $\ell_{t^\star}(k)=c_1$. 
	In view of~\eqref{eq:2214} and $c_1\not\in \Pi_x$ this shows $c_1=c_k$ (see
	Figure~\ref{fig:A}). 
	
	\begin{figure}[h]
\centering
	\begin{tikzpicture}
	\def\w{1.7}
	\def\h{2.5}
	
	\coordinate (a) at (-.4*\w, -.4*\h);
	\coordinate (c) at (0, .4*\h);
	\coordinate (b) at ($(a)!.27!(c)$);
	
	\draw (a) -- (c);
	\draw [dotted, shorten >= 15] ($(b)+(0,.35)$)--($(c)+(0,.35)$);
	
	\draw (\w,0) [out=90, in=0] to (0,\h) [out=180, in=90] to (-\w, 0) [out=-90, in=180] to (0,-\h) [out = 0, in =-90] to (\w, 0);
	
	\draw (-.93*\w, -.4*\h)--(.93*\w, -.4*\h);
	
	\foreach \i in {a,b,c} \fill (\i) circle (1.5pt);
	
	\node [below] at (a) {\small $a$};
	\node [right] at (b) {\small $c_1=c_k$};
	\node [right] at (c) {\small $c_{k-1}$};
	\node at (-1.1*\w, -.4*\h) {$\Pi_x$};
	\end{tikzpicture}
	\caption{The line $L_{s^\star}\subseteq P$.}
	\label{fig:A}
\end{figure}
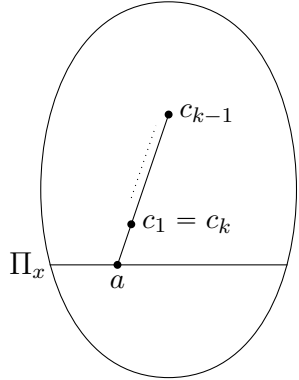

	We contend that 
		\begin{equation}\label{eq:tt}
		\ell_t(i)=\ell_{t^\star}(i)
	\end{equation}
		holds for all $t\in T$ and all $i\in [k]$. To see this we fix any $t\in T$
	and recall that $L_t=L_{s^\star}$. For every $i\in [2, k-1]$ the point $c_i$
	needs to appear somewhere in $L_t$, but due to~\eqref{eq:2214} only $\ell_t(i)=c_i$
	is possible. This leaves us with $\{\ell_t(1), \ell_t(k)\}=\{a, c_1\}$ and 
	in view of $t\not\in S$ we obtain $\ell_t(1)=a$ and $\ell_t(k)=c_1$, 
	which proves~\eqref{eq:tt}. 
	
	Thereby we have determined the points $\ell_1, \dots, \ell_k$ completely
	and we arrive at the following description of the lines $U_1$, $U_2$, $U_k$.
	
	\begin{center}
	\begin{tabular}{c|c|c|c}
	& $C$ & $S$ & $T$ \\
	\hline
	$U_1$ & constant $\ell(c)$ & moving & constant $a$ \\
	\hline
	$U_2$ & constant $\ell(c)$ & moving & moving \\
	\hline
	$U_k$ & constant $\ell(c)$ & constant $a$ & moving
	\end{tabular}
	\end{center}	 
	
	All three lines pass through $\eta_{U_1}^+(a)=\eta_{U_2}^+(a)=\eta_{U_k}^+(a)$.
	Thus they form a tripod in $\ccL$, contrary to our hypothesis.
	
	\smallskip
	
	{\hskip2em \it Second Case. Some $t^\star\in T$ 
	satisfies $L_{t^\star}\ne L_{s^\star}$.}
	
	\smallskip
	
	The distinct combinatorial lines $L_{s^\star}$ and $L_{t^\star}$
	can intersect in at most one point. On the other hand, Claim~\ref{clm:1741}
	tells us that both of them contain at least $k-1$ points 
	from $\{c_1, \dots, c_k\}\sm \Pi_x$. For these reasons we have $k=3$,
	$\Pi_x\cap\{c_1, c_2, c_3\}=\vn$ and, without loss of generality,
	$L_{s^\star}=\{a, c_1, c_2\}$, $L_{t_\star}=\{b, c_1, c_3\}$,
	where $a, b\in \Pi_x$ are distinct (see Figure~\ref{fig:B}). 
	
	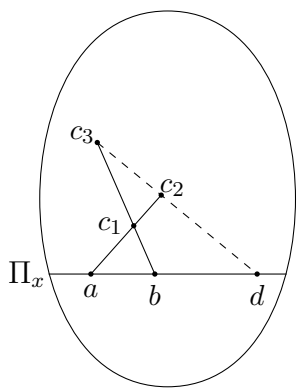
\begin{figure}[h]
\centering
	\begin{tikzpicture}
	\def\w{1.7}
	\def\h{2.5}

	\coordinate (a) at (-.6*\w, -.4*\h);
	\coordinate (b) at (-.1*\w, -.4*\h);
	\coordinate (d) at (.7*\w, -.4*\h);
	\coordinate (c3) at (-.55*\w, .3*\h);
	\coordinate (c1) at (-.265*\w, -.143*\h);
	\coordinate (c2) at ($(c3)!.4!(d)$);
	
	\draw (a) -- (c2);
	\draw (b) -- (c3);
	\draw [dashed] (c3)--(d);
	
	\draw (\w,0) [out=90, in=0] to (0,\h) [out=180, in=90] to (-\w, 0) [out=-90, in=180] to (0,-\h) [out = 0, in =-90] to (\w, 0);
	
	\draw (-.93*\w, -.4*\h)--(.93*\w, -.4*\h);
	
	\foreach \i in {a,b,d, c1, c2, c3} \fill (\i) circle (1pt);
	
	\node [below] at (a) {\small $a$};
	\node [below] at (b) {\small $b$};
	\node [below] at (d) {\small $d$};
	\node [left] at (c1) {\small $c_1$};
	\node at ($(c3)+(-.2,.1)$) {\small $c_3$};
	\node at ($(c2)+(.15,.1)$) {\small $c_2$};
	\node at (-1.1*\w, -.4*\h) {$\Pi_x$};
	
	\end{tikzpicture}
	\caption{The lines $L_{s^\star}, L_{t^\star}\subseteq P$.}
	\label{fig:B}
\end{figure} 	
	If there existed a third line $L_\bullet\subseteq \Pi_x\dcup\{c_1, c_2, c_3\}$,
	it had to be of the form  
	$L_\bullet=\{d, c_2, c_3\}$ for some $d\in \Pi_x$, but then the projections 
	$\psi_\Pi[L_{s^\star}]$, $\psi_\Pi[L_{t^\star}]$, $\psi_\Pi[L_\bullet]$
	formed a $K_4^{(3)-}$ in $G$, contrary to our assumptions. 
	
	This proves that $L_{s^\star}$ and $L_{t^\star}$
	are the only lines in $\Pi_x\dcup\{c_1, c_2, c_3\}$. It is now easy to see that 
	every $t\in T$ satisfies $\ell_t(1)=c_1$, $\ell_t(2)=b$, and $\ell_t(3)=c_3$,
	which in turn yields the following description of the lines $U_1$, $U_2$, and $U_3$.
	
	\begin{center}
	\begin{tabular}{c|c|c|c}
	& $C$ & $S$ & $T$ \\
	\hline
	$U_1$ & constant $\ell(c)$ & moving & moving \\
	\hline
	$U_2$ & constant $\ell(c)$ & moving & constant $b$ \\
	\hline
	$U_3$ & constant $\ell(c)$ & constant $a$ & moving
	\end{tabular}
	\end{center}
	
	\begin{figure}[h]
\centering
	\begin{tikzpicture}
	\coordinate (a) at (-4, 0);
	\coordinate (b) at (0,2);
	\coordinate (c) at (.4, -2);
	\coordinate (ab) at ($(a)!.5!(b)$);
	\coordinate (ac) at ($(a)!.5!(c)$);
	\coordinate (bc) at ($(b)!.5!(c)$);
	
	\draw [shorten <=-15, shorten >=-15](a)--(b);
	\draw [shorten <=-15, shorten >=-15](a)--(c);
	\draw [shorten <=-15, shorten >=-15](c)--(b);
	
	\foreach \i in {a,b,c}
		\fill (\i) circle (1.5pt);
		
	\node [above] at (ab) {$U_2$};
	\node [right] at (bc) {$U_1$};
	\node [below] at (ac) {$U_3$};
	\node at ($(a)+(-1.8,0)$) {\small $\eta_{U_2}^+(a)=\eta_{U_3}^+(b)$};
	\node at ($(b)+(1.5,-.1)$) {\small $\eta_{U_1}^+(b)=\eta_{U_2}^+(b)$};
	\node at ($(c)+(1.5,.3)$) {\small $\eta_{U_1}^+(a)=\eta_{U_3}^+(a)$};
	
	\end{tikzpicture}
	\caption{A triangle in $\ccL$.}
	\label{fig:C}
\end{figure} 	 
	Therefore, any two of the three lines $U_1$, $U_2$, $U_3$ intersect but due to 
	$a\ne b$ they do not pass through a common point (see Figure~\ref{fig:C}).
	This contradicts the assumption that $\ccL$ contains no triangle.
\end{proof}

\subsection{The construction of \texorpdfstring{$\cX(k, r, \mu)$}{X(k, r, mu)}}
\label{subsec:43}

This subsection is devoted to the proof of Theorem~\ref{thm:main3}.
Recall that we are given two integers $k\geq 3$, $r\geq 1$, and a real 
$\mu\in (0, \frac{k-1}{k})$. Theorem~\ref{thm:graphpisier} delivers a
$k$-uniform hypergraph $G$ with $\chi(G)>r$ which has the $\mu$-fractional 
property. In the special case $k=3$ Lemma~\ref{lem:k43-} allows us to assume, 
additionally, that~$G$ is $K_4^{(3)-}$-free. For notational simplicity we can 
suppose $V(G)=[q]$ for some natural number~$q$. 

Let $\Pi_0=(P_0, \psi_0)$ denote the picture zero over $G$ provided by 
Lemma~\ref{lem:Pic0}. Starting with~$\Pi_0$ we shall define recursively 
a sequence $(\Pi_i)_{i\le q}$ of pictures over $G$. These pictures will be 
written in the form $\Pi_i=(P_i, \psi_i)$, where $P_i\subseteq [k]^{m_i}$ 
for some dimension $m_i$, and their music lines will be denoted 
by $\Pi_{i, j}=\psi_i^{-1}(j)$ for all $j\in [q]$. 
As the proof of Lemma~\ref{lem:Pic0} shows, picture zero 
can be assumed to have the dimension $m_0=2|E(G)|$, but this fact is of 
no importance to what follows. The remaining terms of the 
sequence $(m_i)_{i\le q}$ will be defined together with the corresponding 
pictures. 

Suppose now that for some $i\in [q]$ we have just constructed the 
picture $\Pi_{i-1}$. Theorem~\ref{thm:tripod} applied to the music 
line $\Pi_{i-1, i}$ here in place of $A$ there yields for some 
dimension~$n_i$ a collection $\ccL_i\subseteq \ccL(\Pi_{i-1, i}^{n_i})$ 
of combinatorial lines containing neither tripods nor triangles such 
that $\chi(\ccL_i)>r$. Owing to Proposition~\ref{prop:amal}
the structure $\Pi_i=\Pi_{i-1}\conc\ccL_i$ is again a picture over~$G$. 
For definiteness we point out that $P_i\subseteq [k]^{m_i}$ holds for 
$m_i=m_{i-1}n_i$. This concludes the explanation how we move from one 
picture~$\Pi_{i-1}$ of our sequence to the subsequent one. 

It will turn out that the final picture, or more precisely the set 
$\cX=\cX(k, r, \mu)=P_q$, has the properties described in 
Theorem~\ref{thm:main3}. As usual in arguments by partite construction, 
our stipulations unfold as follows. 

\begin{claim}\label{cl:partiteramsey}
	If $\gamma$ denotes an $r$-colouring of $P_q$, then for  
	every nonnegative $i\le q$ there exist a combinatorial 
	embedding $\eta\colon [k]^{m_i}\lra [k]^{m_q}$
	with $\eta[P_i]\subseteq P_q$ and colours 
	$\rho_{i+1}, \dots, \rho_q\in [r]$ such that 
	$(\gamma\circ\eta)(x)=\rho_j$ holds whenever 
	$x\in \Pi_{i, j}$ and $j\in (i, q]$.
\end{claim}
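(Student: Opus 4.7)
The plan is to argue by downward induction on $i$ from $i = q$ down to $i = 0$. For the base case $i = q$ one may take $\eta$ to be the identity map on $[k]^{m_q}$, since the list of colours $\rho_{q+1}, \dots, \rho_q$ is empty. Each inductive step will exploit the bound $\chi(\ccL_i) > r$ guaranteed by Theorem~\ref{thm:tripod} together with the structure of the amalgamation $\Pi_i = \Pi_{i-1}\conc\ccL_i$ in order to upgrade an embedding of $\Pi_i$ into an embedding of $\Pi_{i-1}$ which adds one more monochromatic music line at vertex $i$.

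For the inductive step, assume the claim holds for some $i \in [1, q]$, and let $\eta\colon [k]^{m_i} \to [k]^{m_q}$ together with $\rho_{i+1}, \dots, \rho_q$ be the data supplied by the hypothesis. Since the music line $\Pi_{i, i}$ coincides with $\bigcup \ccL_i$, the map $y \longmapsto \gamma(\eta(y))$ defines an $r$-colouring of the non-isolated vertices of the hypergraph $\ccL_i$; the bound $\chi(\ccL_i) > r$ then produces a combinatorial line $U \in \ccL_i$ and a colour $\rho_i \in [r]$ such that $\gamma(\eta(y)) = \rho_i$ for every $y \in U$. Let $\eta_U\colon \Pi_{i-1, i} \to \Pi_{i-1, i}^{n_i}$ be the combinatorial embedding whose image is $U$, and let $\eta_U^+\colon [k]^{m_{i-1}} \to [k]^{m_i}$ be its canonical extension over the alphabet $[k]$, which, as recorded at the end of Section~\ref{sec:prelim}, is itself a combinatorial embedding over $[k]$. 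Setting $\eta' = \eta \circ \eta_U^+$, the fact that compositions of such extensions are combinatorial embeddings yields $\eta'\colon [k]^{m_{i-1}} \to [k]^{m_q}$, and moreover $\eta'[P_{i-1}] = \eta[P^U] \subseteq \eta[P_i] \subseteq P_q$, where $P^U$ denotes the standard copy of $P_{i-1}$ in $P_i$ attached to $U$.

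It then remains to check that $(\eta'; \rho_i, \rho_{i+1}, \dots, \rho_q)$ satisfies the required monochromaticity condition. For $j \in (i-1, q]$ and $x \in \Pi_{i-1, j}$ one has $\eta_U^+(x) \in \eta_U^+[\Pi_{i-1, j}] = \Pi^U_j$; this equals $U$ when $j = i$ by Fact~\ref{f:44}\,\ref{it:441}, and is contained in $\Pi_{i, j}$ when $j > i$. In the first case $\gamma(\eta'(x)) = \rho_i$ by the choice of $U$, and in the second case $\gamma(\eta'(x)) = \rho_j$ by the induction hypothesis, closing the induction. The main conceptual step is the identification of the pullback colouring $y \longmapsto \gamma(\eta(y))$ on $\Pi_{i, i}$ as precisely the input that the chromatic-number bound $\chi(\ccL_i) > r$ is designed to process; everything else reduces to bookkeeping about how music lines sit inside standard copies. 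The no-tripod and no-triangle hypotheses on $\ccL_i$ play no role in this argument itself — they have already been spent in Proposition~\ref{prop:amal} to guarantee that each $\Pi_i$ is a genuine picture in the first place.
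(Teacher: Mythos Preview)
Your proof is correct and follows essentially the same approach as the paper: downward induction on $i$, the identity as base case, and in the inductive step pulling back the colouring along the current embedding to $\Pi_{i,i}=\bigcup\ccL_i$, invoking $\chi(\ccL_i)>r$ to find a monochromatic $U\in\ccL_i$, and precomposing with $\eta_U^+$ to descend to $\Pi_{i-1}$. Your closing remark that the no-tripod and no-triangle hypotheses are not used here (having been spent in Proposition~\ref{prop:amal}) is a correct and helpful observation that the paper leaves implicit.
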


\begin{proof}
	We proceed by backwards induction on $i$. The statement is vacuously true 
	for~$i=q$. Suppose now that Claim~\ref{cl:partiteramsey} holds for some 
	positive $i\le q$ and that a colouring $\gamma\colon P_q\lra [r]$
	is given. The induction hypothesis shows that there are a combinatorial 
	embedding $\oleta\colon [k]^{m_i}\lra [k]^{m_q}$ with $\oleta[P_i]\subseteq P_q$
	and colours $\rho_{i+1}, \dots, \rho_q\in [r]$ such 
	that $(\gamma\circ\oleta)(x)=\rho_j$ whenever $x\in \Pi_{i, j}$ 
	and $j\in (i, q]$. 
	
	Notice that $\olg=\gamma\circ\oleta$ is an $r$-colouring of $P_i$
	and, hence, of $\Pi_{i, i}=\bigcup\ccL_i$. By our choice of the line 
	system $\ccL_i$ some combinatorial line $U\in \ccL_i$ is monochromatic 
	with respect to $\olg$, say with colour $\rho_i$. Due to the construction 
	of $\Pi_i=\Pi_{i-1}\conc\ccL_i$ the picture $\Pi_i$ contains a standard 
	copy $\Pi_{i-1}^U$ of $\Pi_{i-1}$ whose underlying set $P_{i-1}^U$
	is given by $P_{i-1}^U=\eta^+_U[P_{i-1}]$, 
	where $\eta^+_U\colon [k]^{m_{i-1}}\lra [k]^{m_i}$ is a combinatorial 
	embedding such that $\eta^+_U[\Pi_{i-1, i}]=U$. 
	
	We contend that the combinatorial embedding $\eta=\oleta\circ \eta^+_U$
	from $[k]^{m_{i-1}}$ to $[k]^{m_q}$ and the colours $\rho_i, \dots, \rho_q$ 
	have the desired properties. To confirm this, we consider any point 
	$x\in \Pi_{i-1, j}$, where $j\in [i, q]$. 
	Due to  
	$\gamma\circ\eta=\gamma\circ\oleta\circ \eta^+_U=\olg\circ \eta^+_U$
	we need to show $(\olg\circ \eta^+_U)(x)=\rho_j$. In the special case  
	$j=i$ this follows from $\eta^+_U(x)\in U$ and for $j\in (i, q]$ we 
	can appeal to $\eta^+_U(x)\in\Pi_{i, j}$ combined with the choice of 
	$\rho_j$. 
\end{proof}
 
We are now ready to prove that $\cX=P_q$ satisfies clause~\ref{it:17i}
of Theorem~\ref{thm:main3}. Given a colouring $\gamma\colon P_q\lra [r]$ the 
case $i=0$ of Claim~\ref{cl:partiteramsey} delivers a combinatorial embedding 
$\eta\colon [k]^{m_0}\lra [k]^{m_q}$ with $\eta[P_0]\subseteq P_q$ 
and colours $\rho_1, \dots, \rho_q\in [r]$ such that $(\gamma\circ\eta)(x)=\rho_j$ 
whenever $j\in [q]$ and $x\in \Pi_{0, j}$. Due to $\chi(G)>r$ 
there is an edge $e$ of $G$ that is monochromatic with respect to 
the $r$-colouring $i\longmapsto \rho_i$ of $V(G)=[q]$.
Next, by Lemma~\ref{lem:Pic0} there is 
a combinatorial line $L_e\subseteq P_0$ with $\psi_0[L_e]=e$. 
Now $\eta[L_e]$ is a combinatorial line in $P_q$ all of whose 
points have the same colour as $e$. 

It remains to address part~\ref{it:17ii} of Theorem~\ref{thm:main3}. 
For this purpose we consider the $k$-uniform hypergraph $H$ with vertex 
set $V(H)=\cX=P_q$ whose edges correspond to the combinatorial 
lines $L\subseteq P_q$. 
Since $\Pi_q=(P_q, \psi_q)$ is a picture, we could equivalently
say that the edges of $H$ are the quasilines in $P_q$. 
Moreover, $\psi_q$ is a hypergraph
homomorphism from~$H$ to~$G$. As~$G$ has the $\mu$-fractional property, 
Lemma~\ref{cl:hereditary} implies that $H$ has this property, too. 
In particular, every set $\cY\subseteq \cX$ has a subset $\cZ\subseteq \cY$ 
of size $|\cZ|\geq \mu|\cY|$ which is independent in $H$ and, therefore, contains 
no quasilines. This completes the proof of Theorem~\ref{thm:main3}.

\section{Concluding remarks}\label{sec:remarks}
A $k$-tuple $(x_1, \dots, x_k)$ of natural numbers forms a (possibly degenerate)
arithmetic progression of length $k$ if and only if it solves the homogeneous
system of linear equations
\begin{align}\label{aeq:eq20}
x_i-2x_{i+1}+x_{i+2}=0, \quad \text{where $i=1, \dots, k-2$}.
\end{align}
Thus van der Waerden's theorem and Szemer\'{e}di's theorem can be regarded as 
Ramsey theoretic statements on the solutions of \eqref{aeq:eq20}. Similar results 
have also been studied for more general systems of equations, and one may  
wonder for which systems the natural analogue of Theorem~\ref{thm:main1} holds.

Given a matrix $A\in \ZZ^{m\times n}$ with integer coefficients, 
the system of homogeneous linear equations $A\bx=0$ is called 
\textit{partition regular} if 
for every finite colouring of $\NN$ there exists a monochromatic solution 
$\bx=(x_1, \dots, x_n)^T$ of the system. Examples of partition regular 
systems include 
the single equation $x_1+x_2=x_3$ (Schur's theorem) and arithmetic 
progressions (van der 
Waerden's theorem). A full characterisation of partition regularity was 
obtained by Rado~\cites{R43,D89}.

Similarly, a homogeneous linear system $A\bx=0$ is said to be {\it density regular}
if for every subset $X\subseteq \NN$ of positive upper density there is a 
solution $\bx\in X^n$ which consists of $n$ distinct integers. Density regularity
implies partition regularity (by focusing on the densest colour class), but 
not the other way around. For instance, Schur's equation $x_1+x_2=x_3$ is partition regular but not density regular (as it has no solution with three odd numbers). 
Frankl, Graham, and the second author~\cite{FGR88} gave an explicit 
characterisation of density regular systems.  

It would be interesting to determine for which systems of linear equations there 
exists a version of Theorem~\ref{thm:main1}.

\begin{question}\label{q:last}
Given a system of linear equations $A\bx=0$ with $A\in \ZZ^{m\times n}$ do there
exist a set of natural numbers $X\subseteq \NN$ and a real number $\epsilon>0$ such 
that
\begin{enumerate}[label=\rmlabel]
    \item for every finite colouring of $X$ there is a monochromatic solution 
    of $A\bx=0$
    \item and every finite set $Y\subseteq X$ has a subset $Z\subseteq Y$ 
    	with $|Z|\geq \epsilon |Y|$ not containing a non-trivial solution of $A\bx=0$?
\end{enumerate}
\end{question}

We conjecture that for density regular systems the answer is affirmative. 
An interesting special case is offered by the single 
equation $x_1+\dots+x_h=y_1+\dots+y_h$. Sets without 
nontrivial solutions to this equation, called $B_h$-sets, 
have been studied intensively in the literature. Guided by 
Paul~Erd\H{o}s, the last two authors proved together with Ne\v{s}et\v{r}il 
that Question~\ref{q:last} has a positive answer for $B_h$-sets 
(see~\cite{NRS}*{Theorem~1.2}). As $h$ tends to infinity, their value of $\eps$
converges to zero very rapidly. The girth Ramsey theorem~\cite{girth}
implies that one can take $\eps=1/4$ uniformly in $h$, as we shall explain 
in forthcoming work. 
  
\subsection*{Acknowledgement}
We are grateful to {\sc Joanna Polcyn} for her generous help with the figures.
Moreover, we would like to thank the referees for reading our article very 
carefully. 

\begin{bibdiv}
\begin{biblist}

\bib{BRSSTW06}{article}{
   author={Bang-Jensen, J\o rgen},
   author={Reed, Bruce},
   author={Schacht, Mathias},
   author={\v{S}\'{a}mal, Robert},
   author={Toft, Bjarne},
   author={Wagner, Uli},
   title={On six problems posed by Jarik Ne\v{s}et\v{r}il},
   conference={
      title={Topics in discrete mathematics},
   },
   book={
      series={Algorithms Combin.},
      volume={26},
      publisher={Springer, Berlin},
   },
   date={2006},
   pages={613--627},
   review={\MR{2249289}},
   doi={10.1007/3-540-33700-8\_30},
}

\bib{CL07}{article}{
   author={Croot, Ernest S., III},
   author={Lev, Vsevolod F.},
   title={Open problems in additive combinatorics},
   conference={
      title={Additive combinatorics},
   },
   book={
      series={CRM Proc. Lecture Notes},
      volume={43},
      publisher={Amer. Math. Soc., Providence, RI},
   },
   date={2007},
   pages={207--233},
   review={\MR{2359473}},
   doi={10.1090/crmp/043/10},
}

\bib{D89}{article}{
   author={Deuber, Walter A.},
   title={Developments based on Rado's dissertation ``Studien zur
   Kombinatorik''},
   conference={
      title={Surveys in combinatorics, 1989},
      address={Norwich},
      date={1989},
   },
   book={
      series={London Math. Soc. Lecture Note Ser.},
      volume={141},
      publisher={Cambridge Univ. Press, Cambridge},
   },
   date={1989},
   pages={52--74},
   review={\MR{1036751}},
}

\bib{DKT14}{article}{
   author={Dodos, Pandelis},
   author={Kanellopoulos, Vassilis},
   author={Tyros, Konstantinos},
   title={A simple proof of the density Hales-Jewett theorem},
   journal={Int. Math. Res. Not. IMRN},
   date={2014},
   number={12},
   pages={3340--3352},
   issn={1073-7928},
   review={\MR{3217664}},
   doi={10.1093/imrn/rnt041},
}

\bib{E59}{article}{
   author={Erd\H{o}s, P.},
   title={Graph theory and probability},
   journal={Canadian J. Math.},
   volume={11},
   date={1959},
   pages={34--38},
   issn={0008-414X},
   review={\MR{102081}},
   doi={10.4153/CJM-1959-003-9},
}

\bib{EH66}{article}{
   author={Erd\H{o}s, P.},
   author={Hajnal, A.},
   title={On chromatic number of graphs and set-systems},
   journal={Acta Math. Acad. Sci. Hungar.},
   volume={17},
   date={1966},
   pages={61--99},
   issn={0001-5954},
   review={\MR{193025}},
   doi={10.1007/BF02020444},
}

\bib{ErHa72}{article}{
   author={Erd{\H{o}}s, P.},
   author={Hajnal, A.},
   title={On Ramsey like theorems. Problems and results},
   conference={
      title={Combinatorics (Proc. Conf. Combinatorial Math., Math. Inst.,
      Oxford, 1972)},
   },
   book={
      publisher={Inst. Math. Appl., Southend-on-Sea},
   },
   date={1972},
   pages={123--140},
   review={\MR{0337636 (49 \#2405)}},
}

\bib{EHS82}{article}{
   author={Erd\H{o}s, P.},
   author={Hajnal, A.},
   author={Szemer\'{e}di, E.},
   title={On almost bipartite large chromatic graphs},
   conference={
      title={Theory and practice of combinatorics},
   },
   book={
      series={North-Holland Math. Stud.},
      volume={60},
      publisher={North-Holland, Amsterdam},
   },
   date={1982},
   pages={117--123},
   review={\MR{806975}},
}

\bib{ENR90}{article}{
   author={Erd\H{o}s, Paul},
   author={Ne\v{s}et\v{r}il, Jaroslav},
   author={R\"{o}dl, Vojt\v{e}ch},
   title={On Pisier type problems and results (combinatorial applications to
   number theory)},
   conference={
      title={Mathematics of Ramsey theory},
   },
   book={
      series={Algorithms Combin.},
      volume={5},
      publisher={Springer, Berlin},
   },
   date={1990},
   pages={214--231},
   review={\MR{1083603}},
   doi={10.1007/978-3-642-72905-8\_15},
}

\bib{ET36}{article}{
   author={Erd\"{o}s, Paul},
   author={Tur\'{a}n, Paul},
   title={On Some Sequences of Integers},
   journal={J. London Math. Soc.},
   volume={11},
   date={1936},
   number={4},
   pages={261--264},
   issn={0024-6107},
   review={\MR{1574918}},
   doi={10.1112/jlms/s1-11.4.261},
}

\bib{FGR87}{article}{
   author={Frankl, P.},
   author={Graham, R. L.},
   author={R\"{o}dl, V.},
   title={Induced restricted Ramsey theorems for spaces},
   journal={J. Combin. Theory Ser. A},
   volume={44},
   date={1987},
   number={1},
   pages={120--128},
   issn={0097-3165},
   review={\MR{871393}},
   doi={10.1016/0097-3165(87)90064-1},
}
	
\bib{FGR88}{article}{
   author={Frankl, P.},
   author={Graham, R. L.},
   author={R\"{o}dl, V.},
   title={Quantitative theorems for regular systems of equations},
   journal={J. Combin. Theory Ser. A},
   volume={47},
   date={1988},
   number={2},
   pages={246--261},
   issn={0097-3165},
   review={\MR{930955}},
   doi={10.1016/0097-3165(88)90020-9},
}

\bib{F77}{article}{
   author={Furstenberg, Harry},
   title={Ergodic behavior of diagonal measures and a theorem of Szemer\'{e}di
   on arithmetic progressions},
   journal={J. Analyse Math.},
   volume={31},
   date={1977},
   pages={204--256},
   issn={0021-7670},
   review={\MR{498471}},
   doi={10.1007/BF02813304},
}

\bib{FK78}{article}{
   author={Furstenberg, H.},
   author={Katznelson, Y.},
   title={An ergodic Szemer\'{e}di theorem for commuting transformations},
   journal={J. Analyse Math.},
   volume={34},
   date={1978},
   pages={275--291 (1979)},
   issn={0021-7670},
   review={\MR{531279}},
   doi={10.1007/BF02790016},
}

\bib{FK91}{article}{
   author={Furstenberg, H.},
   author={Katznelson, Y.},
   title={A density version of the Hales-Jewett theorem},
   journal={J. Anal. Math.},
   volume={57},
   date={1991},
   pages={64--119},
   issn={0021-7670},
   review={\MR{1191743}},
   doi={10.1007/BF03041066},
}

\bib{G01}{article}{
   author={Gowers, W. T.},
   title={A new proof of Szemer\'{e}di's theorem},
   journal={Geom. Funct. Anal.},
   volume={11},
   date={2001},
   number={3},
   pages={465--588},
   issn={1016-443X},
   review={\MR{1844079}},
   doi={10.1007/s00039-001-0332-9},
}

\bib{G07}{article}{
   author={Gowers, W. T.},
   title={Hypergraph regularity and the multidimensional Szemer\'{e}di theorem},
   journal={Ann. of Math. (2)},
   volume={166},
   date={2007},
   number={3},
   pages={897--946},
   issn={0003-486X},
   review={\MR{2373376}},
   doi={10.4007/annals.2007.166.897},
}

\bib{HJ63}{article}{
   author={Hales, A. W.},
   author={Jewett, R. I.},
   title={Regularity and positional games},
   journal={Trans. Amer. Math. Soc.},
   volume={106},
   date={1963},
   pages={222--229},
   issn={0002-9947},
   review={\MR{143712}},
   doi={10.2307/1993764},
}

\bib{L68}{article}{
   author={Lov\'{a}sz, L.},
   title={On chromatic number of finite set-systems},
   journal={Acta Math. Acad. Sci. Hungar.},
   volume={19},
   date={1968},
   pages={59--67},
   issn={0001-5954},
   review={\MR{220621}},
   doi={10.1007/BF01894680},
}

\bib{NRS06}{article}{
   author={Nagle, Brendan},
   author={R\"{o}dl, Vojt\v{e}ch},
   author={Schacht, Mathias},
   title={The counting lemma for regular $k$-uniform hypergraphs},
   journal={Random Structures Algorithms},
   volume={28},
   date={2006},
   number={2},
   pages={113--179},
   issn={1042-9832},
   review={\MR{2198495}},
   doi={10.1002/rsa.20117},
}

\bib{NR79}{article}{
   author={Ne\v{s}et{r}il, Jaroslav},
   author={R\"{o}dl, Vojt\v{e}ch},
   title={A short proof of the existence of highly chromatic hypergraphs
   without short cycles},
   journal={J. Combin. Theory Ser. B},
   volume={27},
   date={1979},
   number={2},
   pages={225--227},
   issn={0095-8956},
   review={\MR{546865}},
   doi={10.1016/0095-8956(79)90084-4},
}

\bib{NRS}{article}{
   author={Ne\v{s}et{r}il, Jaroslav},
   author={R\"{o}dl, Vojt\v{e}ch},
   author={Sales, Marcelo},
   title={On Pisier type theorems},
   note={To appear in Combinatorica},
   doi={10.1007/s00493-024-00115-1},
}

\bib{P83}{article}{
   author={Pisier, Gilles},
   title={Arithmetic characterizations of Sidon sets},
   journal={Bull. Amer. Math. Soc. (N.S.)},
   volume={8},
   date={1983},
   number={1},
   pages={87--89},
   issn={0273-0979},
   review={\MR{682829}},
   doi={10.1090/S0273-0979-1983-15092-9},
}

\bib{PM12}{article}{
   author={Polymath, D. H. J.},
   title={A new proof of the density Hales-Jewett theorem},
   journal={Ann. of Math. (2)},
   volume={175},
   date={2012},
   number={3},
   pages={1283--1327},
   issn={0003-486X},
   review={\MR{2912706}},
   doi={10.4007/annals.2012.175.3.6},
}

\bib{R43}{article}{
   author={Rado, R.},
   title={Note on combinatorial analysis},
   journal={Proc. London Math. Soc. (2)},
   volume={48},
   date={1943},
   pages={122--160},
   issn={0024-6115},
   review={\MR{9007}},
   doi={10.1112/plms/s2-48.1.122},
}

\bib{R29}{article}{
   author={Ramsey, F. P.},
   title={On a Problem of Formal Logic},
   journal={Proc. London Math. Soc. (2)},
   volume={30},
   date={1930},
   number={1},
   pages={264--286},
   issn={0024-6115},
   review={\MR{1576401}},
   doi={10.1112/plms/s2-30.1.264},
}

\bib{jarik}{article}{
   author={Reiher, Chr.},
   title={Graphs of large girth},
   eprint={2403.13571},
   note={Submitted},
}
	
\bib{girth}{article}{ 
	author={Reiher, Chr.},
	author={R\"{o}dl, Vojt\v{e}ch},
	title={The girth Ramsey theorem}, 
	eprint={2308.15589},
	note={Submitted},
}

\bib{3edges}{article}{
   author={Reiher, Chr.},
   author={R\"odl, Vojt\v ech},
   author={Schacht, Mathias},
   title={On a generalisation of Mantel's Theorem to Uniformly Dense
   Hypergraphs},
   journal={International Mathematics Research Notices (IMRN)},
   date={2018},
   number={16},
   pages={4899--4941},
   issn={1073-7928},
   review={\MR{3848224}},
   doi={10.1093/imrn/rnx017},
}

%\bib{appl}{article}{
%	author={Reiher, Chr.},
%	author={R\"odl, Vojt\v ech},
%	author={Schacht, Mathias},
%	title={Some results in structural Ramsey theory and their application},
%	note={In preparation},
%}	

\bib{R90}{article}{
   author={R\"{o}dl, Vojt\v{e}ch},
   title={On Ramsey families of sets},
   journal={Graphs Combin.},
   volume={6},
   date={1990},
   number={2},
   pages={187--195},
   issn={0911-0119},
   review={\MR{1073689}},
   doi={10.1007/BF01787730},
}

\bib{RS04}{article}{
   author={R\"{o}dl, Vojt\v{e}ch},
   author={Skokan, Jozef},
   title={Regularity lemma for $k$-uniform hypergraphs},
   journal={Random Structures Algorithms},
   volume={25},
   date={2004},
   number={1},
   pages={1--42},
   issn={1042-9832},
   review={\MR{2069663}},
   doi={10.1002/rsa.20017},
}
	
\bib{Sz75}{article}{
   author={Szemer\'{e}di, E.},
   title={On sets of integers containing no $k$ elements in arithmetic
   progression},
   journal={Acta Arith.},
   volume={27},
   date={1975},
   pages={199--245},
   issn={0065-1036},
   review={\MR{369312}},
   doi={10.4064/aa-27-1-199-245},
}
	
\bib{T06}{article}{
   author={Tao, Terence},
   title={A quantitative ergodic theory proof of Szemer\'{e}di's theorem},
   journal={Electron. J. Combin.},
   volume={13},
   date={2006},
   number={1},
   pages={Research Paper 99, 49},
   review={\MR{2274314}},
   doi={10.37236/1125},
}	
	
\bib{vdW27}{article}{
   author={van der Waerden, Bartel Lendeert},
   title={Beweis einer Baudetschen Vermutung},
   journal={Nieuw. Arch. Wisk.},
   volume={15},
   date={1927},
   pages={212--216},
}

\end{biblist}
\end{bibdiv}
\end{document}